\documentclass[a4paper,12pt]{amsart}
\usepackage[utf8]{inputenc}
\usepackage{amsmath}
\usepackage{amsfonts}
\usepackage{amsthm}
\usepackage{mathtools}
\usepackage{amssymb}
\usepackage{xcolor}
\usepackage[all]{xy}
\usepackage{enumerate}
\usepackage{tikz}
\usepackage{tikz}
\usepackage{bbm}
\usepackage{marvosym}
\usepackage{cleveref} 
\usepackage{tikz-cd} \usetikzlibrary{tikzmark,shapes.geometric,shapes.symbols}
\usepackage{cancel}
\usepackage[normalem]{ulem}
\usepackage{verbatim} 
\allowdisplaybreaks 

\newtheorem{theorem}{Theorem}[section]
\newtheorem{lemma}[theorem]{Lemma}
\newtheorem{proposition}[theorem]{Proposition}
\newtheorem{corollary}[theorem]{Corollary}

\theoremstyle{definition}
\newtheorem{definition}[theorem]{Definition}

\theoremstyle{remark}
\newtheorem{example}[theorem]{Example}
\newtheorem{remark}[theorem]{Remark}

\numberwithin{equation}{section}

\newcommand{\bool}[1]{[\![{#1}]\!]}

\newcommand{\Z}{\mathbb{Z}}

\newcommand{\kpar}{\Bbbk_{par}}

\title[The monoidal structure of the category of partial representations of finite groups]{The monoidal structure of the category of partial representations of finite groups}

\author[Arthur R. Alves Neto]{Arthur R. Alves Neto}

\author[E. Batista]{Eliezer Batista}

\author[J. Mendez]{Javier Méndez}

\address[Javier Méndez, Eliezer Batista and Arthur R. Alves Neto]{Departamento de Matem\'atica, Universidade Federal de Santa Catarina, 88040-970 Florian\'opolis SC, Brazil. }
\email{javier.mendeza92@gmail.com } \email{eliezer1968@gmail.com}  \email{arthurntcwb@yahoo.com.br}

\thanks{\\ {\bf 2020 Mathematics Subject Classification}: 18M05, 20C99. \\   {\bf Key words and phrases:} partial representations of groups, partial group algebras, monoidal categories.}

\begin{document}

\begin{abstract}
    In this work, we analyze the structure of the category of partial representations of a finite group $G$ as a multifusion category, providing an alternative way to describe simple objects and their tensor products. We describe the interconnection between the category of partial representations of a finite group and the category of global representations of its subgroups (the Christmas Tree's Theorem). Also, for a finite abelian group $G$, we prove that the category of partial representations of any of its subgroups can be embedded into the category of partial representations of $G$ (the Matryoshka's Theorem).
\end{abstract}

\maketitle

\section{Introduction}

Partial group representations and the partial group algebra first appear in \cite{DEP}. The authors' main motivation was to find a mechanism for distinguishing between two abelian groups by means of algebras associated to them. It is known that, for a finite abelian group $G$ and for a field $\Bbbk$ whose characteristic does not divide the order of $G$, the group algebra $\Bbbk G$ is isomorphic as an algebra to a direct sum of copies of the basis field $\Bbbk$. Therefore, two abelian groups of the same order are indistinguishable from the point of view of their group algebras. 

Roughly speaking, a partial representation of a group can be obtained by restricting a representation of the same group to a subspace which is not invariant, but which satisfies some compatibility conditions \cite{Abadiedilations, ABVdilations}. Formally, while a representation $\rho :G\rightarrow GL(\mathbb{V})$ associates each element $g\in G$ to a linear automorphism $\rho (g): \mathbb{V} \rightarrow \mathbb{V}$, a partial representation $\pi :G \rightarrow \text{End}_\Bbbk (\mathbb{V})$, in its turn, associates each element $g\in G$ to a linear transformation $\pi (g) :\mathbb{V} \rightarrow \mathbb{V}$. This linear transformation can be interpreted as a partially defined linear isomorphism $\pi (g):\text{Dom}(\pi (g)) \rightarrow \text{Im}(\pi (g))$. The composition of two distinct partially defined isomorphisms in $\mathbb{V}$ is only possible under the assumption that their domains and codomains are compatible, this is done by projecting first on the domain of the first linear transformation or after on the codomain of the second. That is, for any elements $g,h \in G$ we have
\begin{eqnarray*}
\pi (g) \circ \pi (h) \circ \pi (h^{-1}) & = & \pi (gh) \circ \pi (h^{-1}), \\
\pi (g^{-1}) \circ \pi (g) \circ \pi (h) & = & \pi (g^{-1}) \circ \pi (gh) .
\end{eqnarray*}

All partial representations of a group $G$ can be interpreted as modules over an algebra constructed upon $G$, which is called the partial group algebra $\Bbbk_{par}G$ \cite{DEP}. This algebra is isomorphic to an algebra of a groupoid related to $G$, whose connected components are associated to subsets $X\subseteq G$ containing the neutral element of $G$. A simple counting argument shows us that for a group $G$ of order $n$, the dimension of $\Bbbk_{par}G$ is $d=2^{n-2}(n+1)$. The geometry of the groupoid in question helps us to better understand the structure of the algebra $\Bbbk_{par}G$ itself as a direct product of matrix algebras, each one with entries in the group algebras of a subgroup of $G$ which serves as isotropy subgroup of the vertices of one of the connected components of the groupoid \cite{DEP,DP}. The rich structure of the partial group algebras allows one to distinguish between two abelian groups of the same order, for example, while the abelian groups $\mathbb{Z}_4$ and $\mathbb{Z}_2 \times \mathbb{Z}_2$ both have their group algebras over the complex numbers isomorphic to $\mathbb{C}^4$, we have $\mathbb{C}_{par} \left( \mathbb{Z}_4 \right) \cong 7\mathbb{C} \oplus M_2 (\mathbb{C}) \oplus M_3 (\mathbb{C})$ and $\mathbb{C}_{par} \left( \mathbb{Z}_2 \times \mathbb{Z}_2 \right) \cong 11\mathbb{C} \oplus M_3 (\mathbb{C})$.

The aim of this article is to describe the properties of the category of partial modules over a finite group $G$. Our approach is to follow the general theory of partial representations of Hopf algebras \cite{ABVparreps}. The category of partial modules of a Hopf algebra $H$ is the theory of modules of a Hopf algebroid, denoted by $H_{par}$, constructed from $H$. For the case of a group algebra, $H=\Bbbk G$, the Hopf algebroid $(\Bbbk G)_{par}$ coincides with the partial group algebra $\Bbbk_{par} G$. More specifically, there is a subalgebra $A_{par}(H)\subseteq H_{par}$ which encodes the size of the allowable partiality for the Hopf algebra $H$. As an example, for the universal enveloping algebra of a Lie algebra $H= \mathcal{U} (\mathfrak{g})$, we have $A_{par}(H)=\Bbbk$, that is, this Hopf algebra does not admit any partial module which is not global. 

For the case of group algebras, $A_{par}(\Bbbk G)$ is a commutative algebra generated by idempotents. The algebra $\Bbbk_{par} G$ has a structure of Hopf algebroid over the base algebra $A_{par}(\Bbbk G)$. We use some techniques developed in \cite{ABRhpar} to obtain an alternative proof of the Theorem 3.2 of \cite{DEP} which states that the structure of $\Bbbk_{par}G$ is a product of matrix algebras with entries in the group algebras of subgroups $H\leq G$ (Proposition \ref{GammaX} and Theorem \ref{theorem.kparGGamma.iso.MnKG}). 

Another important aspect is that the partial group algebra $\Bbbk_{par}G$ is isomorphic to a groupoid algebra $\Bbbk \Gamma (G)$ (see Proposition \ref{gammaG} for a description of the groupoid structure of $\Gamma (G)$). Any groupoid algebra is endowed with a structure of a weak Hopf algebra and weak Hopf algebras can be understood as Hopf algebroids over a suitable subalgebra. A very natural question is whether the two Hopf algebroid structures coming from the theory of partial representations \cite{ABVparreps} and that coming from the weak Hopf algebra structure isomorphic? The affirmative answer is found in Theorem \ref{kpar-e-kgamma-SAO-ISOMORFAS}.

The classification of partial $G$-modules was done in \cite{DLP} and \cite{DN}, there, it was clear that the connected components of the groupoid $\Gamma (G)$ are the key to obtain the simple $\Bbbk_{par}G$-modules. Here, we give an alternative way to describe the simple modules (see Proposition \ref{proposition.Mxalpha.module.structure} and Theorem \ref{simplemodules}). As the category of $\Bbbk_{par}G$-modules is semisimple, one can easily calculate the Grothendieck ring of the monoidal category ${}_{\Bbbk_{par}G}\text{Mod}$. Here, the tensor product is balanced over $A_{par}(\Bbbk G)$, which is the monoidal unit of that category. As the monoidal unit is not simple, one can see that the category ${}_{\Bbbk_{par}G}\text{Mod}$ is a multifusion category in the sense of \cite{EGNObook}.

Finally, the very structure of the partial group algebra as a direct product of matrix algebras with entries in the group algebras of its subgroups suggests that the category of $\Bbbk_{par}G$-modules is closely related to the categories of modules, and the category of partial modules of its subgroups. Then, we have the main theorems of this work. The Christmas Tree's Theorem (Theorem \ref{natal}) states that for any subgroup $H\leq G$, there is a faithful, additive and monoidal functor from the category ${}_{\Bbbk H}\text{Mod}$ to the category ${}_{\Bbbk_{par}G}\text{Mod}$. This means that the partial representations of $G$ encode all representations of its subgroups. The Matryoshka's Theorem (Theorem \ref{matryoshka}) states that for any abelian group $G$ and any subgroup $H\leq G$, there is a faithful, additive and monoidal functor from the category ${}_{\Bbbk_{par}H}\text{Mod}$ to the category ${}_{\Bbbk_{par}G}\text{Mod}$. The motivation for these fancy names came from the images that these results evoked. The $\Bbbk H$-modules of a subgroup $H\leq G$ appeared inside the category of partial modules of $G$ as objects that are written as a tensor product of a $\Bbbk H$ modules by a one dimensional partial $\Bbbk G$ module spanned by the projection over the subgroup $H$. They are tiny objects, similar to glass balls hanging from a Christmas tree. The embedding of the category of partial representations of a subgroup $H\leq G$ was first suggested by the very design of the groupoid graphs associated with these groups. Each connected component of the graph of $\Gamma (H)$ appears disguised in connected components of the graph of $\Gamma (G)$, like a smaller doll nestled inside a larger one.

This article is structured as follows. In Section 2 we discuss the structure of $\Bbbk_{par}G$ and give a brief recap of the main points related to weak Hopf algebras. Section 3 is devoted to prove the isomorphism of the Hopf algebroid structures of $\Bbbk_{par}G$ and $\Bbbk \Gamma (G)$. In Section 4, we give an alternative description of the simple objects in the category of $\Bbbk_{par}G$-modules and describe the monoidal structure of this category. In Section 5, we prove the main theorems of this work: the Christmas Tree's Theorem (Theorem \ref{natal}) and the Matryoshka's Theorem (Theorem \ref{matryoshka}). Finally, in Section 6 we give some perspectives for future works.

\section{Mathematical Preliminaries}
Throughout the text, $\Bbbk$ will denote an algebraically closed field of characteristic zero. 

\subsection{Partial representations of groups and the partial group algebra}

\begin{definition}
\begin{enumerate}
    \item A partial representation of a group $G$ over a unital $\Bbbk$-algebra $B$ is a map $\pi : G \rightarrow B$ such that
    \begin{enumerate}
        \item[(PR1)] $\pi (e)=1_B$, where $e\in G$ is the neutral element of $G$.
        \item[(PR2)] $\pi (g) \pi (h) \pi (h^{-1}) =\pi (gh) \pi (h^{-1})$, for every $g,h\in G$. 
        \item[(PR3)] $\pi (g^{-1}) \pi (g) \pi (h) =\pi (g^{-1}) \pi (gh)$, for every $g,h\in G$.
    \end{enumerate}
    \item A partial module over $G$, or a partial $G$-module, is a pair $(\mathbb{V}, \pi)$, where $\mathbb{V}$ is a $\Bbbk$-vector space and $\pi :G\rightarrow \text{End}_\Bbbk (\mathbb{V})$ is a partial representation of $G$.
    \item A morphism between partial $G$-modules $(\mathbb{V}, \pi )$ and $(\mathbb{W} , \sigma)$ is a $\Bbbk$-linear map $f:\mathbb{V} \rightarrow \mathbb{W}$ such that, for every $g\in G$, $f\circ \pi (g)=\sigma (g)\circ f$.
    \item The category of partial $G$-modules is denoted by ${}_G \mathcal{M}^{par}$.
\end{enumerate}    
\end{definition}

\begin{example}
    Every representation of a group $G$ on a $\Bbbk$-vector space $\mathbb{V}$, that is, a group homomorphism $\pi :G\rightarrow \text{Aut}_\Bbbk (\mathbb{V})$ turns $\mathbb{V}$ into a partial $G$-module.
\end{example}

\begin{example}
    In fact, every partial $G$-module can be obtained out of a global representation of $G$ \cite{Abadiedilations,ABVdilations}. Given a representation $\pi :G\rightarrow \text{Aut}_\Bbbk (\mathbb{V})$, a $\Bbbk$-linear projection $T:\mathbb{V} \rightarrow \mathbb{V}$ is said to satisfy the $c$-condition if for every $g\in G$, we have $T\circ T_g =T_g \circ T$, where $T_g =\pi (g) \circ T \circ \pi (g^{-1})$. The subspace $T(\mathbb{V})\leq \mathbb{V}$ carries a structure of partial $G$-module, $\widetilde{\pi}:G \rightarrow \text{End}_\Bbbk (T(\mathbb{V}))$ given by
    \[
    \widetilde{\pi}(g) (T(v))=T(\pi (g) (T(v))).
    \]
\end{example}

In the same way as $\Bbbk$-linear representations of $G$ can be viewed as modules over the group algebra $\Bbbk G$, we can define a universal algebra, $\Bbbk_{par} G$, such that partial $G$-modules could be viewed as modules over this algebra.

\begin{definition} \label{Partialgroupalgebra} \cite{DEP}
    Given a group $G$, its partial group algebra $\Bbbk_{par} G$ is the algebra 
    \[
    \Bbbk_{par}G = \Bbbk \langle [g]\; | \; g\in G \rangle /\mathcal{I} ,
    \]
    where $\mathcal{I}$ is the ideal generated by relations
    \[
    [e] - 1 , \quad [g][h][h^{-1}]  -  [gh][h^{-1}] ,\quad [g^{-1}][g][h]  -  [g^{-1}][gh] .
    \]
\end{definition}

Note that the map 
\[
\begin{array}{rccc} [ \underline{\;} ]: & G & \rightarrow  & \Bbbk_{par} G\\
\,  & g & \mapsto & [g] \end{array}
\]
is automatically a partial representation of $G$ over the algebra $\Bbbk_{par}G$ and there is the following universal property.

\begin{theorem}  \cite{DEP} For any group $G$ and any partial representation $\pi :G \rightarrow B$, there exists a unique morphism of algebras $\overline{\pi}: \Bbbk_{par}G \rightarrow B$ such that the following diagram commutes

\centerline{\xymatrix{ & \Bbbk_{par}G \ar@{-->}[d]^-{\overline{\pi}} \\
G \ar[ur]^-{[\underline{\;}]} \ar[r]_-{\pi} & B}}  
\end{theorem}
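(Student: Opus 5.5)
Since $\Bbbk_{par}G$ is defined in Definition \ref{Partialgroupalgebra} as the quotient $\Bbbk\langle [g]\mid g\in G\rangle/\mathcal{I}$ of a free algebra by an ideal given by explicit generators, the plan is to apply the universal property of the free algebra first and then check that the resulting map vanishes on $\mathcal{I}$.

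First, let $\Bbbk\langle [g]\mid g\in G\rangle$ be the free associative unital $\Bbbk$-algebra on the set of symbols $\{[g]: g\in G\}$. Given the partial representation $\pi:G\to B$, its universal property gives a unique unital algebra morphism $\widetilde{\pi}:\Bbbk\langle [g]\mid g\in G\rangle\to B$ with $\widetilde{\pi}([g])=\pi(g)$ for every $g\in G$.

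Second, we show $\mathcal{I}\subseteq\ker\widetilde{\pi}$. The kernel of an algebra morphism is a two-sided ideal, and $\mathcal{I}$ is the smallest two-sided ideal containing the three families of generators. So it suffices to check that $\widetilde{\pi}$ kills each generator:
\begin{itemize}
\item $\widetilde{\pi}([e]-1)=\pi(e)-1_B=0$ by (PR1);
\item $\widetilde{\pi}([g][h][h^{-1}]-[gh][h^{-1}])=\pi(g)\pi(h)\pi(h^{-1})-\pi(gh)\pi(h^{-1})=0$ by (PR2);
\item $\widetilde{\pi}([g^{-1}][g][h]-[g^{-1}][gh])=0$ by (PR3).
\end{itemize}
Hence $\widetilde{\pi}$ factors through the quotient, giving an algebra morphism $\overline{\pi}:\Bbbk_{par}G\to B$ with $\overline{\pi}([g])=\pi(g)$. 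This says exactly that the diagram commutes.

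Third, uniqueness. Any algebra morphism $\psi:\Bbbk_{par}G\to B$ making the diagram commute satisfies $\psi([g])=\pi(g)=\overline{\pi}([g])$ for all $g\in G$. The classes $[g]$ generate $\Bbbk_{par}G$ as a unital algebra, being images of the free generators. Two algebra morphisms that agree on a generating set coincide, so $\psi=\overline{\pi}$.

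Along the way we note that $[\underline{\;}]:G\to\Bbbk_{par}G$ is itself a partial representation, since the defining relations of $\mathcal{I}$ are precisely (PR1)–(PR3).

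There is no real obstacle; this is a standard universal-algebra argument. The only point needing care is the reduction to generators of $\mathcal{I}$, which uses that kernels of algebra morphisms are two-sided ideals.
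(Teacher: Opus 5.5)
Your proof is correct, and it is the standard argument. The paper gives no proof of its own; it cites \cite{DEP}, where the universal property follows directly from the presentation of $\Bbbk_{par}G$ by generators and relations, exactly as in your free-algebra-then-quotient argument. Uniqueness also holds among possibly non-unital algebra morphisms, since $1=[e]$ in $\Bbbk_{par}G$ forces $\psi(1)=\pi(e)=1_B$.
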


\begin{corollary}
    The category of partial $G$-modules is isomorphic to the category of modules over $\Bbbk_{par}G$.
\end{corollary}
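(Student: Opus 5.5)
We construct mutually inverse functors between ${}_G\mathcal{M}^{par}$ and the category of left $\Bbbk_{par}G$-modules, both acting as the identity on underlying vector spaces and on linear maps. The only tool needed is the universal property of $\Bbbk_{par}G$ from the preceding theorem, applied with $B=\text{End}_\Bbbk(\mathbb{V})$.

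\emph{From partial modules to modules.} Let $(\mathbb{V},\pi)$ be a partial $G$-module. By the theorem there is a unique algebra morphism $\overline{\pi}:\Bbbk_{par}G\to \text{End}_\Bbbk(\mathbb{V})$ with $\overline{\pi}([g])=\pi(g)$. This makes $\mathbb{V}$ a left $\Bbbk_{par}G$-module.

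Let $f:(\mathbb{V},\pi)\to(\mathbb{W},\sigma)$ be a morphism of partial modules. Consider the set of $x\in\Bbbk_{par}G$ with $f\circ\overline{\pi}(x)=\overline{\sigma}(x)\circ f$. It is a subalgebra, since it contains $1$ and is closed under sums and products. It contains every generator $[g]$, because $f\circ\pi(g)=\sigma(g)\circ f$. Hence it is all of $\Bbbk_{par}G$, and $f$ is $\Bbbk_{par}G$-linear. We send $f$ to itself, so functoriality is immediate.

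\emph{From modules to partial modules.} Let $M$ be a left $\Bbbk_{par}G$-module with structure map $\rho:\Bbbk_{par}G\to\text{End}_\Bbbk(M)$. Set $\pi_M=\rho\circ[\underline{\;}]$. This is a partial representation of $G$: (PR1)--(PR3) hold for $[\underline{\;}]$ in $\Bbbk_{par}G$ (they are the defining relations of the ideal $\mathcal{I}$), and any algebra morphism preserves them. A $\Bbbk_{par}G$-linear map in particular commutes with the action of each $[g]$, so it is a morphism of partial $G$-modules.

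\emph{The two composites are identities.} Starting from $(\mathbb{V},\pi)$, we recover $\overline{\pi}\circ[\underline{\;}]=\pi$ by the commuting diagram. Starting from $(M,\rho)$, both $\overline{\pi_M}$ and $\rho$ are algebra morphisms $\Bbbk_{par}G\to\text{End}_\Bbbk(M)$ extending $\pi_M$. By the uniqueness clause of the theorem, or simply because the elements $[g]$ generate $\Bbbk_{par}G$, they coincide. Both functors are the identity on morphisms, so the composites are identity functors on the nose, and the categories are isomorphic, not merely equivalent.

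There is no real obstacle. The only point needing care is that morphisms correspond exactly, which reduces to the fact that the elements $[g]$ generate $\Bbbk_{par}G$ as an algebra.
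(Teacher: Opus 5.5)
Your proposal is correct and follows essentially the same route as the paper, which likewise obtains the $\Bbbk_{par}G$-action from the universal property (explicitly $([g_1]\cdots[g_n])\triangleright v=\pi(g_1)\circ\cdots\circ\pi(g_n)(v)$) and recovers the partial representation from a module via $\pi(g)(m)=[g]\triangleright m$. You also spell out what the paper leaves implicit: morphisms correspond exactly because the $[g]$ generate $\Bbbk_{par}G$, and the two constructions are mutually inverse on the nose.
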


More explicitly, given a partial $G$-module $(\mathbb{V} , \pi)$, we endow $\mathbb{V}$ with a structure of $\Bbbk_{par}G$-module by 
\[
([g_1]\cdots[g_n])\triangleright v=\pi (g_1) \circ \cdots \circ \pi (g_n)(v).
\]
On the other hand, given a $\Bbbk_{par}G$-module $M$, one can define a partial representation $\pi :G\rightarrow \text{End}_{\Bbbk} (M)$ by $\pi (g)(m)=[g]\triangleright m$.

In general, an element of the form $\varepsilon_g =[g][g^{-1}]$ does not coincide with the unit of $\Bbbk_{par}G$, but the subalgebra
\[
A_{par}(\Bbbk G)=\langle \varepsilon_g \; | \; g\in G \rangle
\]
plays an important role in the theory of partial representations of $G$. The main results concerning the subalgebra $A_{par}(\Bbbk G)$ can be summarized in the following proposition.

\begin{proposition} \cite{DEP}
    Let $G$ be a group and consider the subalgebra $A_{par}(\Bbbk G) \subseteq \Bbbk_{par}G$, as defined above. Then
    \begin{enumerate}
        \item For any $g\in G$, we have $\varepsilon_g =\varepsilon_g \, \varepsilon_g$.
        \item For any $g,h \in G$, we have $[g]\varepsilon_h=\varepsilon_{gh}[g]$.
        \item For any $g,h\in G$, we have $\varepsilon_g \, \varepsilon_h =\varepsilon_h \, \varepsilon_g$.
        \item Any element $x=[g_1]\ldots [g_n]\in \Bbbk_{par}G$, can be written as
\[
x=\varepsilon_{g_1}\varepsilon_{g_1g_2}\ldots \varepsilon_{g_1\ldots g_n}[g_1\ldots g_n].
        \]
        \item $A_{par}(\Bbbk G)$ is a partial $G$-module, with the partial representation \\ $\pi :G\rightarrow \text{End}_\Bbbk (A_{par}(\Bbbk G))$ given by
        \[
        \pi (g)(\varepsilon_{h_1}\ldots \varepsilon_{h_n})=\varepsilon_{gh_1} \ldots \varepsilon_{gh_n} \varepsilon_g.
        \]
        For any $a,b\in A_{par}(\Bbbk G)$ and $g,h\in G$, we have 
        \begin{eqnarray*}
           & \ & \pi (g)(ab)=(\pi(g)(a))\, (\pi(g)(b)), \ \textrm{ and } \\ 
          & \ & \pi(g) \circ \pi(h)(a) = \varepsilon_g \pi(gh)(a).            
        \end{eqnarray*}
        
    \end{enumerate}
\end{proposition}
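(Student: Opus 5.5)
All five items follow by direct computation from the three defining relations of Definition \ref{Partialgroupalgebra}, and I will prove them in the order stated, each one using the previous ones. The relation used throughout is the one obtained by setting $h=g^{-1}$ or $g=h^{-1}$ in (PR2)/(PR3). With $h=g^{-1}$ in $[g^{-1}][g][h]=[g^{-1}][gh]$ (after renaming) one gets $[g][g^{-1}][g]=[g]$; this comes from (PR2) with $h=g^{-1}$, namely $[g][g^{-1}][g]=[e][g]=[g]$.

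For (1), multiply this identity on the right by $[g^{-1}]$, which gives $\varepsilon_g\varepsilon_g=\varepsilon_g$.

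For (2), expand $[g]\varepsilon_h=[g][h][h^{-1}]=[gh][h^{-1}]$ by (PR2). Then insert $[g][g^{-1}]$ on the left using $[gh][h^{-1}] = [gh][h^{-1}][g^{-1}][g]$, which is (PR2) applied to the pair $(gh,h^{-1}g^{-1})$ followed by $g$, and needs some care. The cleaner route is the following. By (PR3) with $g\to (gh)^{-1}$ we have $[gh][(gh)^{-1}][g]=[gh][h^{-1}]$. Hence $\varepsilon_{gh}[g]=[gh][h^{-1}]=[g]\varepsilon_h$.

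For (3), write $\varepsilon_g\varepsilon_h=[g][g^{-1}]\varepsilon_h$. By (2) this equals $[g]\varepsilon_{g^{-1}h}[g^{-1}]$. Expanding $\varepsilon_{g^{-1}h}$ and applying (PR2)/(PR3) symmetrically gives $[g][g^{-1}h][h^{-1}g][g^{-1}]$. Applying (PR2) and (PR3) to absorb the outer factors turns this into $[h][h^{-1}g][g^{-1}]$, that is, $[h][h^{-1}]\varepsilon_g$ up to reorganization, and hence $\varepsilon_h\varepsilon_g$. This symmetrization is the step I expect to be the main obstacle, since it requires choosing the right instances of the relations. The standard trick from \cite{DEP} is to show that $\varepsilon_g\varepsilon_h=[h][h^{-1}][g][g^{-1}]$ directly by expanding both sides to the common word $[g][g^{-1}h][h^{-1}]$.

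For (4), argue by induction on $n$. The step uses $[g_1\cdots g_{n-1}][g_n]=[g_1\cdots g_{n-1}][g_n][g_n^{-1}][g_n]$. By (PR2) this equals $[g_1\cdots g_n][g_n^{-1}][g_n]$, which by (2) is $\varepsilon_{g_1\cdots g_n}[g_1\cdots g_n]$. The $\varepsilon$'s then commute into the stated order by (3).

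For (5), I first check that $\pi(g)$ is well defined. To do this I realize it as $\pi(g)(a)=[g]a[g^{-1}]$, which lies in $A_{par}$ by (2) and (1): $[g]\varepsilon_{h_1}\cdots\varepsilon_{h_n}[g^{-1}]=\varepsilon_{gh_1}\cdots\varepsilon_{gh_n}\varepsilon_g$. Multiplicativity then follows because $[g]ab[g^{-1}]=[g]a[g^{-1}][g]b[g^{-1}]$. To see this, note that $[g^{-1}][g]=\varepsilon_{g^{-1}}$ commutes with $b$ by (3), and $[g]\varepsilon_{g^{-1}}=[g]$. For the composition rule, $[g][h]a[h^{-1}][g^{-1}]$ becomes $\varepsilon_g[gh]a[(gh)^{-1}]$ once (4) is used on $[g][h]$ and the mirror identity on $[h^{-1}][g^{-1}]$. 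Finally, the partial representation axioms for $\pi$ are inherited from those of $[\,\cdot\,]$ via this conjugation formula.
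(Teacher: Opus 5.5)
Items (1), (2) and (5) are essentially fine. Realizing $\pi(g)$ as $a\mapsto [g]a[g^{-1}]$ is a good way to get well-definedness on $A_{par}(\Bbbk G)$ for free. There is one small slip in the multiplicativity check: to use $[g]\varepsilon_{g^{-1}}=[g]$ you must move $\varepsilon_{g^{-1}}$ past $a$, not $b$; if you move it past $b$, use $\varepsilon_{g^{-1}}[g^{-1}]=[g^{-1}]$ instead. However, item (4) rests on a false identity, and item (3) leaves its key step unproved.

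In (4) you write $[g_1\cdots g_n][g_n^{-1}][g_n]=[g_1\cdots g_n]\varepsilon_{g_n^{-1}}$ and then apply (2). But (2) gives $\varepsilon_{g_1\cdots g_n g_n^{-1}}[g_1\cdots g_n]=\varepsilon_{g_1\cdots g_{n-1}}[g_1\cdots g_n]$, not $\varepsilon_{g_1\cdots g_n}[g_1\cdots g_n]$. Your version cannot hold. Since $\varepsilon_x[x]=[x][x^{-1}][x]=[x]$, it says $[a][b]=[ab]$ for all $a,b$. In particular it gives $\varepsilon_g=[g][g^{-1}]=[e]=1$, which fails on the one-dimensional partial module where every $[g]$ with $g\neq e$ acts by $0$. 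The correct step is $[a][b]=[ab]\varepsilon_{b^{-1}}=\varepsilon_a[ab]$, and the induction then reads
\[
[g_1]\cdots[g_n]=\varepsilon_{g_1}\cdots\varepsilon_{g_1\cdots g_{n-1}}\,\varepsilon_{g_1\cdots g_{n-1}}[g_1\cdots g_n]=\varepsilon_{g_1}\cdots\varepsilon_{g_1\cdots g_{n-1}}[g_1\cdots g_n]
\]
by (1). The last factor $\varepsilon_{g_1\cdots g_n}$ is then inserted via $[x]=\varepsilon_x[x]$, and no reordering by (3) is needed. Your final formula is correct, but you reach it only through the wrong identity.

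In (3) you correctly arrive at $\varepsilon_g\varepsilon_h=[g]\varepsilon_{g^{-1}h}[g^{-1}]$. The claim that absorbing the outer factors gives $[h][h^{-1}g][g^{-1}]$ is then only asserted. Your fallback remark about a ``common word'' is circular: $\varepsilon_h\varepsilon_g$ naturally expands to $[h][h^{-1}g][g^{-1}]$, and identifying this with $[g][g^{-1}h][h^{-1}]$ is precisely the content of (3).

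The equality you assert is true. Apply $[a][b]=[ab][b^{-1}][b]$ (a consequence of (PR2) and $[b][b^{-1}][b]=[b]$) to $[g][g^{-1}h]$, then use $[x][x^{-1}][x]=[x]$ with $x=h^{-1}g$. But there is no obstacle at all if you simply apply (2) once more: $[g]\varepsilon_{g^{-1}h}=\varepsilon_{h}[g]$, hence $\varepsilon_g\varepsilon_h=\varepsilon_h[g][g^{-1}]=\varepsilon_h\varepsilon_g$.
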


For a finite group $G$, there is a more useful characterization of the algebra $\Bbbk_{par}G$ as a groupoid algebra. Define the sets
\[
\mathcal{P}_{g_1 , \ldots , g_n} (G)=\{ X\subseteq G \; | \; g_1 , \ldots , g_n\in X \} ,
\]
and the set of pairs:
\[
\Gamma (G) =\{ (g, X)\in G\times \mathcal{P}_e (G) \; | \; g^{-1} \in X \} .
\]

\begin{proposition}\label{gammaG}
There is a groupoid structure on $\Gamma (G)$ defined as follows:
\begin{enumerate}
    \item The set of units is $\Gamma (G)^{(0)}=\mathcal{P}_e (G)$.
    \item The source and target maps $s,t: \Gamma (G) \rightarrow \Gamma (G)^{(0)}$ are
    \[
    s(g,X)=X, \qquad t(g,X)=gX=\{ gh\in G \; | \; h\in X \}.
    \]
    \item The unit map $u:\Gamma (G)^{(0)} \rightarrow \Gamma (G)$ is 
    \[
    u(X)=(e,X).
    \]
    \item The multiplication in $\Gamma (G)$ is 
    \[
    (g,X).(h,Y) =\left\{ \begin{array}{lr} (gh, Y) & \text{ if }  X=hY \\
    - &  \text{ otherwise}  \end{array} \right. .
    \]
    \item The inverse of an element $(g,X) \in \Gamma (G)$ is
    \[
    (g,X)^{-1} =(g^{-1} , gX).
    \]
\end{enumerate}
\end{proposition}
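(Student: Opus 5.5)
We check the groupoid axioms for $\Gamma(G)$ directly from the definitions in Proposition \ref{gammaG}. Two facts about subsets make this possible: the left translation $X\mapsto gX$ is compatible with the group law, so $g(hY)=(gh)Y$ and $eX=X$; and $g^{-1}\in X$ if and only if $e\in gX$.

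\textbf{Well-definedness of the structure maps.} For $(g,X)\in\Gamma(G)$ we need $t(g,X)=gX\in\mathcal{P}_e(G)$, which holds because $g^{-1}\in X$ gives $e=gg^{-1}\in gX$. The unit $u(X)=(e,X)$ lies in $\Gamma(G)$ since $e^{-1}=e\in X$. Clearly $s(u(X))=t(u(X))=X$.

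For the product, suppose $X=hY$ with $(g,X),(h,Y)\in\Gamma(G)$. We must show $(gh)^{-1}=h^{-1}g^{-1}\in Y$. From $g^{-1}\in X=hY$ we get $h^{-1}g^{-1}\in Y$, as required. So composability means exactly $s(g,X)=t(h,Y)$, and then
\[
s((g,X)(h,Y))=Y=s(h,Y),\qquad t((g,X)(h,Y))=ghY=gX=t(g,X).
\]

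For the inverse, $(g^{-1},gX)\in\Gamma(G)$ because $(g^{-1})^{-1}=g\in gX$, using $e\in X$. Its source is $gX=t(g,X)$ and its target is $g^{-1}gX=X=s(g,X)$.

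\textbf{Associativity.} Take $(g,X)$, $(h,Y)$, $(k,Z)$ with $X=hY$ and $Y=kZ$. Both ways of bracketing the triple product give $(ghk,Z)$. The composability conditions agree on the two sides: one needs $Y=kZ$ together with $X=hY$, and the other needs $X=(hk)Z$, which follows from $hkZ=hY$.

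\textbf{Unit and inverse laws.} We have $(e,gX)(g,X)=(g,X)$ and $(g,X)(e,X)=(g,X)$. For the inverses, $(g^{-1},gX)(g,X)=(e,X)=u(s(g,X))$, since composability requires $gX=gX$. Likewise $(g,X)(g^{-1},gX)=(e,gX)=u(t(g,X))$, since composability requires $X=g^{-1}(gX)$.

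None of these steps is a real obstacle. The only point that needs care is the closure of $\Gamma(G)$ under multiplication, that is, checking $(gh)^{-1}\in Y$, and this is handled by the translation argument above.
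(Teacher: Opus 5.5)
Your verification is correct and complete: the structure maps land where they should, closure of $\Gamma(G)$ under products holds (since $g^{-1}\in X=hY$ gives $(gh)^{-1}=h^{-1}g^{-1}\in Y$), and associativity, the unit laws and the inverse laws all check out. The paper states this proposition without proof, leaving it as the routine axiom check you carried out, so your argument is essentially the intended one.
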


\begin{theorem} \label{isomorfismoprincipal}\cite{DEP}
    Let $G$ be a finite group. Then the partial group algebra $\Bbbk_{par}G$ is isomorphic to the groupoid algebra $\Bbbk\Gamma (G)$, with the algebra isomorphism $\lambda :\Bbbk_{par}G \rightarrow \Bbbk \Gamma (G)$ given explicitly by
    \[
    \lambda ([g])=\sum_{X\in \mathcal{P}_{e,g^{-1}}(G)} (g,X),
    \]
    and
    \[
    \lambda^{-1} (g,X)=[g]\prod_{r\in X} \varepsilon_r \prod_{s\notin X} (1-\varepsilon_{s}).
    \]
    If we denote
    \[
    P_X = \prod_{r\in X} \varepsilon_r \prod_{s\notin X} (1-\varepsilon_{s})
    \]
    then we can write 
    \[
    \lambda^{-1} (g,X) = [g]P_X.
    \]
\end{theorem}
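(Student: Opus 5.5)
Theorem \ref{isomorfismoprincipal} will follow from the universal property of $\Bbbk_{par}G$ together with an inverse map. First I check that $\pi(g)=\sum_{X\in\mathcal{P}_{e,g^{-1}}(G)}(g,X)$ is a partial representation of $G$ in $\Bbbk\Gamma(G)$; the universal property then yields an algebra map $\lambda$. Second I check that the formula for $\lambda^{-1}$ gives a map $\Bbbk\Gamma(G)\to\Bbbk_{par}G$ that is inverse to $\lambda$. Since $\lambda$ is already multiplicative, it is enough to show $\lambda\circ\lambda^{-1}=\mathrm{id}$ on the basis $\{(g,X)\}$ and that $\lambda^{-1}\circ\lambda$ is the identity on the generators $[g]$.

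For (PR1), $\pi(e)=\sum_{X\ni e}(e,X)$, which is the unit of the groupoid algebra of the finite groupoid. For (PR2) and (PR3) I compute products directly. We have $(g,X)(h,Y)=(gh,Y)$ when $X=hY$. So $\pi(h)\pi(h^{-1})=\sum_{Y\ni e,h}(e,Y)$, since the compatible composites are $(h,h^{-1}Y)(h^{-1},Y)=(e,Y)$. Then $\pi(g)\pi(h)\pi(h^{-1})=\sum_{Y\ni e,h,g^{-1}h}(g,Y)$. Likewise $\pi(gh)\pi(h^{-1})=\sum(gh,h^{-1}Y)(h^{-1},Y)$ with $Y\ni e,h$ and $h^{-1}Y\ni h^{-1}g^{-1}$, that is $Y\ni g^{-1}$. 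This gives the same sum once the conditions on $Y$ are checked carefully. (PR3) is symmetric. The algebra $\lambda(\varepsilon_g)=\sum_{X\ni e,g}(e,X)$ is therefore the diagonal projection onto the units containing $g$.

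For the inverse, note that $\lambda(\varepsilon_r)$ and $\lambda(1-\varepsilon_s)$ are commuting diagonal idempotents. Hence $\lambda(P_X)=\prod_{r\in X}\lambda(\varepsilon_r)\prod_{s\notin X}\lambda(1-\varepsilon_s)=(e,X)$: the only unit $Y$ surviving every factor is one that contains exactly the elements of $X$, namely $Y=X$. Then $\lambda([g]P_X)=\pi(g)(e,X)=(g,X)$ if $g^{-1}\in X$. So $\lambda\circ\lambda^{-1}=\mathrm{id}$, and $\lambda$ is surjective.

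For injectivity I show that $\lambda^{-1}\lambda([g])=[g]\sum_{X\ni e,g^{-1}}P_X=[g]$. Since $e\in X$ always and $\varepsilon_e=1$, expanding $\prod_{s\ne e}(\varepsilon_s+(1-\varepsilon_s))=1$ gives $\sum_{X\ni e}P_X=1$. Imposing $g^{-1}\in X$ multiplies by $\varepsilon_{g^{-1}}$, so $\sum_{X\ni e,g^{-1}}P_X=\varepsilon_{g^{-1}}$, and $[g]\varepsilon_{g^{-1}}=[g][g^{-1}][g]=[g]$ by (PR2) with $h=g^{-1}$. Hence $\lambda^{-1}\circ\lambda$ is the identity on the generators. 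However, $\lambda^{-1}$ is not yet known to be multiplicative, so this does not finish the argument. I will instead show that $\lambda^{-1}$ is surjective: every word $[g_1]\cdots[g_n]=\varepsilon_{g_1}\cdots\varepsilon_{g_1\cdots g_n}[g_1\cdots g_n]$ lies in the span of the elements $[g]P_X$. This uses item (4) of the proposition, the relation $[g]\varepsilon_h=\varepsilon_{gh}[g]$, and the expansion $1=\sum P_X$. Then $\lambda\lambda^{-1}=\mathrm{id}$ forces $\lambda^{-1}$ to be injective, hence bijective, and so $\lambda$ is an isomorphism.

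The main obstacle is the bookkeeping in (PR2)/(PR3), in particular tracking the translated sets $h^{-1}Y$. Surjectivity is a secondary point, and it is handled by the normal form in item (4).
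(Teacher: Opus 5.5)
Your argument is correct. The paper gives no proof of this statement; it is quoted from \cite{DEP}. Your route is the standard one: build a partial representation into $\Bbbk\Gamma(G)$, apply the universal property, and exhibit an explicit one-sided inverse. This is the same template the paper uses for Theorem~\ref{theorem.kparGGamma.iso.MnKG}.

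Two things should be fixed in the write-up. First, in (PR2) you misstate the index set. The product $(g,W)(e,Y)$ is defined only when $W=Y$, and $W$ must contain $g^{-1}$. Hence
\[
\pi(g)\pi(h)\pi(h^{-1})=\sum_{Y\ni e,h,g^{-1}}(g,Y),
\]
not a sum over $Y\ni e,h,g^{-1}h$. Your computation of the other side produces the composites $(gh,h^{-1}Y)(h^{-1},Y)=(g,Y)$ over exactly the sets $Y\ni e,h,g^{-1}$. So the two sides agree term by term, and the phrase ``once the conditions on $Y$ are checked carefully'' can be replaced by this one line.

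Second, the reduction announced in your opening paragraph is not valid as stated. It is not enough to check $\lambda^{-1}\circ\lambda$ on the generators $[g]$, because $\lambda^{-1}$ is only known to be linear. You noticed this yourself. What actually carries the proof is the later step. The elements $[g]P_X$ with $g^{-1}\in X$ span $\Bbbk_{par}G$, using item (4), $\varepsilon_h[g]=[g]\varepsilon_{g^{-1}h}$, $\varepsilon_hP_X=\bool{h\in X}P_X$ and $\sum_XP_X=1$. Hence $\lambda^{-1}$ is onto, and $\lambda\circ\lambda^{-1}=\mathrm{id}$ then makes it bijective. Equivalently, $\dim\Bbbk_{par}G\le|\Gamma(G)|$ and $\lambda$ is surjective. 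The opening should state this strategy directly.

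Alternatively, you can check multiplicativity of $\lambda^{-1}$ directly. From $P_X[h]=[h]P_{h^{-1}X}$ and $[g][h]=[gh]\varepsilon_{h^{-1}}$ you get $[g]P_X[h]P_Y=\bool{X=hY}[gh]P_Y$ whenever $h^{-1}\in Y$. This mirrors $(g,X)(h,Y)$ and makes the generator check sufficient.

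The same caution applies to the paper's own proof of Theorem~\ref{theorem.kparGGamma.iso.MnKG}. There $\psi_X$ is checked only on the elements $[g]\Gamma_X$, which implicitly requires $\psi_X$ to be multiplicative. It is, but the paper does not say so.
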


\begin{remark} For the rest of this text, we will assume that $G$ is a finite group, even though some results could be proven for any group.
\end{remark}

Note that, if $e\notin X$, the expression $P_X$ automatically vanishes. For future reference, the main properties of the elements $P_X \in A_{par}(\Bbbk G)$ can be summarized as follows.

\begin{proposition}\label{px}{\ }
    \begin{enumerate}
        \item $P_X P_Y =\left\{ \begin{array}{lr} P_X & \text{if } X=Y \\ 0 & \text{ otherwise} \end{array}\right.$, $ \text{for all } X,Y \in \mathcal{P}_e (G)$.
        \item $[g]P_X=\left\{ \begin{array}{lr} P_{gX}[g] & \text{if } g^{-1}\in X \\
        0 & \text{ otherwise} \end{array} \right. $, $\text{for all } X \in \mathcal{P}_e (G)$ and $g\in G$.
        \item $\displaystyle\sum_{X\subseteq G} P_X =\sum_{X\in \mathcal{P}_e (G)} P_X =\mathbbm{1}$.
    \end{enumerate}
\end{proposition}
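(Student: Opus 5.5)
The plan is to work entirely inside the commutative algebra $A_{par}(\Bbbk G)$, using only that the $\varepsilon_g$ are pairwise commuting idempotents with $\varepsilon_e=[e][e]=1$, together with the commutation rule $[g]\varepsilon_h=\varepsilon_{gh}[g]$ from the proposition of \cite{DEP} above. We adopt the convention that $P_X$ is defined for every subset $X\subseteq G$ by the same product formula. Since $\varepsilon_e=1$, the factor $1-\varepsilon_e$ vanishes whenever $e\notin X$, so $P_X=0$ for such $X$.

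\textbf{Item (1).} Write $P_X=\prod_{r\in G} q_r^X$, where $q_r^X=\varepsilon_r$ if $r\in X$ and $q_r^X=1-\varepsilon_r$ otherwise. Each factor is an idempotent, and all factors commute, so $P_X$ is idempotent and $P_XP_X=P_X$. If $X\neq Y$, choose $r$ in the symmetric difference, say $r\in X\setminus Y$. Then $P_XP_Y$ contains the factor $\varepsilon_r(1-\varepsilon_r)=\varepsilon_r-\varepsilon_r^2=0$, hence $P_XP_Y=0$.

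\textbf{Item (3).} Expand
\[
\mathbbm{1}=\prod_{r\in G}\bigl(\varepsilon_r+(1-\varepsilon_r)\bigr).
\]
Distributing the product over the sum gives exactly one term for each choice function, that is, for each subset $X\subseteq G$, and that term is $P_X$. So $\sum_{X\subseteq G}P_X=\mathbbm{1}$. The terms with $e\notin X$ vanish by the remark above, which leaves the sum over $\mathcal{P}_e(G)$.

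\textbf{Item (2).} Apply $[g]\varepsilon_h=\varepsilon_{gh}[g]$ factor by factor, together with $[g](1-\varepsilon_h)=(1-\varepsilon_{gh})[g]$. This gives
\[
[g]P_X=\prod_{r\in X}\varepsilon_{gr}\prod_{s\notin X}(1-\varepsilon_{gs})\,[g].
\]
Since left multiplication by $g$ is a bijection of $G$ carrying $X$ onto $gX$ and its complement onto the complement of $gX$, the product equals $P_{gX}$. So $[g]P_X=P_{gX}[g]$ for every $X$.

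It remains to get the case split. If $g^{-1}\notin X$, then $e\notin gX$, so $P_{gX}=0$ and $[g]P_X=0$. If $g^{-1}\in X$ and $e\in X$, then $gX\in\mathcal{P}_e(G)$ and we obtain the stated formula.

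The only point needing care is this case analysis in (2), namely recognizing that the vanishing condition $g^{-1}\notin X$ is precisely $e\notin gX$. This rests on the vanishing of $P_Y$ for $e\notin Y$, which holds because $\varepsilon_e=1$. Everything else is a routine manipulation of commuting idempotents.
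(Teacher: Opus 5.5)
Your proof is correct. Each step holds:
\begin{itemize}
\item $\varepsilon_e=[e][e]=1$ forces $P_X=0$ whenever $e\notin X$.
\item Since the factors $\varepsilon_r$ and $1-\varepsilon_r$ are commuting idempotents, item (1) follows: $P_X$ is idempotent, and a pair $\varepsilon_r(1-\varepsilon_r)=0$ appears in $P_XP_Y$ whenever $X\neq Y$.
\item Expanding $\prod_{r\in G}\bigl(\varepsilon_r+(1-\varepsilon_r)\bigr)$ over the finite group $G$ gives item (3).
\item Pushing $[g]$ through each factor via $[g]\varepsilon_h=\varepsilon_{gh}[g]$, then re-indexing by $s\mapsto gs$, gives the identity $[g]P_X=P_{gX}[g]$ for every $X\subseteq G$. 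The case split follows because $g^{-1}\notin X$ if and only if $e\notin gX$.
\end{itemize}

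The paper gives no proof of this proposition. It records these identities as known properties of the $P_X$, implicitly from \cite{DEP}, so there is no argument of the paper's to compare with. Your route is the standard direct verification from the relations listed just before. The unconditional form you obtain in (2) is exactly what justifies the Boolean rewriting $[g]P_X=P_{gX}[g]\bool{g^{-1}\in X}$ that the paper uses immediately afterwards.

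One small point: in the last case of (2), only $g^{-1}\in X$ is needed for $gX\in\mathcal{P}_e(G)$. The condition $e\in X$ plays no role there.
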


Introducing the notation for the Boolean value: $[\! [ \text{\bf{sentence}} ]\! ]$ as
\[
[\! [ \text{\bf{sentence}} ]\! ] =\left\{ \begin{array}{lcl} 1 & \text{if \bf{sentence}} & \text{is true} \\
0 & \text{if \bf{sentence}} & \text{is false ,} \end{array}\right.
\]
one can write items (1) and (2) from the Proposition \ref{px} as
\begin{enumerate}
\item $P_X P_Y =[\![ X=Y]\!] P_X$;
\item $[g]P_X= P_{gX}[g] [\! [g^{-1} \in X ]\!]$.
\end{enumerate}

For any $g\in G$, we have
\[
\varepsilon_g=\varepsilon_g \mathbbm{1}=\varepsilon_g \sum_{X\in \mathcal{P}_e(G) } P_X=\sum_{X\in \mathcal{P}_e (G)}P_X [\![ g\in X ]\!]=\sum_{X\in \mathcal{P}_{e,g}(G)}P_X.
\]
Therefore, one can see that the set $\mathcal{B}=\{ P_X \; | \; X\in \mathcal{P}_e(G) \}$ consists of a linear basis for $A_{par}(\Bbbk G)$. Moreover,  for any $X,Y \in \mathcal{P}_e(G)$ define the relation
\[
X\sim Y \quad \Leftrightarrow \quad \exists g\in G, \text{ with } X=gY.
\]
This is easily verified to be an equivalence relation. This implies that there is a subset $T=\{ X_1 , \ldots X_r \} \subseteq \mathcal{P}_e(G)$ such that $X_i \nsim X_j$, for $i\neq j$ and for any $X\in \mathcal{P}_e(G)$, there exists a $X_i \in T$ with $X\sim X_i$. Then, one can decompose the basis of $A_{par}(\Bbbk G)$ as a disjoint union
\[
\mathcal{B} =\dot{\bigcup}_{X_i \in T} \{ P_{gX_i} \; | \; g^{-1} \in X_i \}.
\]

Denoting by $G_X \leq G$ the \textit{isotropy subgroup} of $X\in \mathcal{P}_e(G)$, that is
\[ G_X = \{g \in G \mid \, gX = X\}, \] 
one can define
\[
\Gamma_X =\dfrac{1}{|G_X|}\sum_{g^{-1}\in X} P_{gX} \  = \sum_{Y\sim X} P_Y .
\]
If $X \sim Y$, then it is clear that  $\Gamma_X = \Gamma_Y$. The following result will summarize the main properties of $\Gamma_X$, for any $X \in \mathcal{P}_e(G)$.

\begin{proposition}\label{GammaX}
    \begin{enumerate}
        \item $P_Y \Gamma_X = [\![ X\sim Y]\!] P_Y,$ $\text{for all } X,Y \in \mathcal{P}_e (G)$.
        \item $\Gamma_Y\Gamma_X = [\![ X\sim Y]\!] \Gamma_X$, $\text{for all } X,Y \in \mathcal{P}_e (G)$.
        \item $[g]\Gamma_X = \Gamma_X[g] $, $\text{for all } X \in \mathcal{P}_e (G)$ and $g\in G$.
        \item $ \sum_{X\in T} \Gamma_X =\mathbbm{1}$, where $T=\{ X_1 , \ldots ,X_r \} \subseteq \mathcal{P}_e(G)$ such that $X_i \nsim X_j$, for $i\neq j$ and for any $X\in \mathcal{P}_e(G)$, there exists a $X_i \in T$ with $X\sim X_i$.
    \end{enumerate}
\end{proposition}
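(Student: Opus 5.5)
The plan is to work directly with the basis $\mathcal{B}=\{P_Y\}$ of $A_{par}(\Bbbk G)$ and the relations in Proposition \ref{px}. First I check that the two expressions for $\Gamma_X$ agree. The map $g\mapsto gX$ from $\{g\in G \mid g^{-1}\in X\}$ onto the class $\{Y\in\mathcal{P}_e(G)\mid Y\sim X\}$ is surjective. Indeed, if $Y=hX$ contains $e$, then $h^{-1}\in X$. Moreover $gX=g'X$ if and only if $g^{-1}g'\in G_X$, so every fibre is a left coset of $G_X$ and has exactly $|G_X|$ elements. Hence $\frac{1}{|G_X|}\sum_{g^{-1}\in X}P_{gX}=\sum_{Y\sim X}P_Y$; the division is allowed since $\operatorname{char}\Bbbk=0$.

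For item (1), expand $P_Y\Gamma_X=\sum_{Z\sim X}P_YP_Z$. By Proposition \ref{px}(1) each summand equals $[\![Y=Z]\!]P_Y$, so the sum is $P_Y$ exactly when $Y$ lies in the class of $X$, and $0$ otherwise. For item (2), write $\Gamma_Y=\sum_{Z\sim Y}P_Z$ and apply (1) termwise. Since $Z\sim Y$, we have $[\![Z\sim X]\!]=[\![X\sim Y]\!]$, so $\Gamma_Y\Gamma_X=[\![X\sim Y]\!]\sum_{Z\sim Y}P_Z=[\![X\sim Y]\!]\Gamma_Y$. When $X\sim Y$ this equals $\Gamma_X$, because $\Gamma_X=\Gamma_Y$ in that case.

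Item (3) is the only step that needs the commutation relation. By Proposition \ref{px}(2),
\[
[g]\Gamma_X=\sum_{Z\sim X}[g]P_Z=\sum_{Z\sim X,\ g^{-1}\in Z}P_{gZ}[g].
\]
The map $Z\mapsto gZ$ is a bijection from $\{Z\sim X \mid g^{-1}\in Z\}$ onto $\{W\sim X\mid e\in W,\ g\in W\}$. Its inverse is $W\mapsto g^{-1}W$, and $e\in g^{-1}W$ holds exactly when $g\in W$. So the right-hand side is $\sum_{W\sim X,\ g\in W}P_W[g]$.

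It remains to show that this equals $\Gamma_X[g]=\sum_{W\sim X}P_W[g]$, that is, that $P_W[g]=0$ whenever $g\notin W$. I would get this from $[g]=[g][g^{-1}][g]=\varepsilon_g[g]$, which follows from (PR2) with $h=g^{-1}$ together with $[e]=1$. Then $P_W[g]=P_W\varepsilon_g[g]$, and $P_W\varepsilon_g=[\![g\in W]\!]P_W$ by the expansion $\varepsilon_g=\sum_{X\in\mathcal{P}_{e,g}(G)}P_X$ and Proposition \ref{px}(1). I expect this step to be the main, though mild, obstacle: it is where one must notice that the elements $P_W$ with $g\notin W$ are absorbed on the left of $[g]$.

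Finally, for item (4), the equivalence classes of $\sim$ partition $\mathcal{P}_e(G)$ and $T$ contains exactly one representative of each class. Hence $\sum_{X\in T}\Gamma_X=\sum_{X\in T}\sum_{Y\sim X}P_Y=\sum_{Y\in\mathcal{P}_e(G)}P_Y=\mathbbm{1}$ by Proposition \ref{px}(3).
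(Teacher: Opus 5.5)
Your proof is correct and follows essentially the same route as the paper. Items (1), (2) and (4) are read off from Proposition \ref{px}, and for (3) both arguments use $[g]P_Z=[\![g^{-1}\in Z]\!]P_{gZ}[g]$ together with $[g]=\varepsilon_g[g]$; your bijection $Z\mapsto gZ$ over the class of $X$ plays the role of the paper's reindexing $k=g^{-1}h$ in the normalized sum $\frac{1}{|G_X|}\sum_{h^{-1}\in X}P_{hX}$. You also check that the two formulas for $\Gamma_X$ agree, which the paper takes for granted.
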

\begin{proof} The verification of (1) follows directly from part (1) of Proposition \ref{px} and the definition of $\Gamma_X$, and consequently implies (2). The next two items need more attention. For any $g \in G$ and $X \in \mathcal{P}_e(G)$, we have that 
\begin{eqnarray*}
     [g]\Gamma_X & = &   \dfrac{1}{|G_X|}\sum_{h^{-1}\in X}[g]P_{hX} 
     =   \dfrac{1}{|G_X|}\sum_{h^{-1}\in X}[\![g^{-1}\in hX]\!]P_{ghX}[g] \\
     & = &   \dfrac{1}{|G_X|}\sum_{\tiny{\begin{array}{c} h^{-1}\in X, \\ (gh)^{-1}\in X \end{array}}}P_{ghX}[g]
\end{eqnarray*}
 and on the other hand 
 \begin{eqnarray*}
     & \, & \Gamma_X[g]  =  \displaystyle \dfrac{1}{|G_X|}\sum_{h^{-1}\in X}P_{hX}[g] 
     =  \displaystyle \dfrac{1}{|G_X|}\sum_{h^{-1}\in X}P_{hX}\varepsilon_g[g] \\
      & = & \displaystyle \dfrac{1}{|G_X|}\sum_{h^{-1}\in X}[\![g \in hX]\!]P_{hX}[g]  =  \displaystyle \dfrac{1}{|G_X|}\sum_{\tiny{\begin{array}{c} h^{-1}g\in X,\\ h^{-1} \in X\end{array}}}P_{hX}[g] \\
      & \stackrel{k:=g^{-1}h}{=} & \displaystyle \dfrac{1}{|G_X|}\sum_{\tiny{\begin{array}{c} k^{-1}\in X, \\ (gk)^{-1} \in X \end{array}}}P_{gkX}[g],
 \end{eqnarray*}
 hence $[g]\Gamma_X = \Gamma_X[g] $, $\text{for all } X \in \mathcal{P}_e (G)$ and $g\in G$. The last item follows from the fact that $\displaystyle\sum_{X \in \mathcal{P}_e (G)}P_X = \mathbbm{1}$. Notice that
 $$ \begin{array}{ccl}
     \displaystyle \sum_{X \in T}\Gamma_X 
      & = & \displaystyle \sum_{X\in T}\sum_{Y \sim X}P_{Y} \\
      & = & \displaystyle \sum_{Z\in \mathcal{P}_e(G)} P_{Z} \ = \ \mathbbm{1}
 \end{array} $$
 what proves the last item.
 \qedhere
 
\end{proof}
This last proposition show us that
\[ \kpar G \simeq \bigoplus_{X\in T}\kpar G\, \Gamma_{X}, \]
where $\kpar G\, \Gamma_{X}$ is the ideal generated by $\Gamma_X$.

So if we want to study the algebra $\kpar G$, we only have to study the algebras $\kpar G\, \Gamma_{X}$. We only have to take care of the algebra structure of the algebras $\kpar G\, \Gamma_{X}$, which are algebras with units $\Gamma_X$.

Take $X \in \mathcal{P}_e(G)$, and consider the isotropy group $G_X$ of $X$, then $X$ can be written as a disjoint union of right cosets of $G_X$. More explicitly, there are elements $g_1, ..., g_n \in G$, such that $g_i^{-1} \in X$, for all $i=1,...,n$ and 
\[
X = \dot{\bigcup}_{i=1}^{n} \;  G_Xg_i^{-1}.
\]
Since the union above is disjoint, it is easy to see that if $g_i^{-1}g_j \in G_X$ then $i=j$. Then if $X \sim Y$, there exists an $i \in \{1,..., n\}$ such that $Y=g_iX$ so we can write $\Gamma_X = P_{g_1X} + ... + P_{g_nX}$. 

For any $g \in G$ we have
\begin{eqnarray*}
     & \, & [g]\Gamma_X  =  \displaystyle \sum_{j=1}^n[g]P_{g_jX}  =  \displaystyle \sum_{j=1}^n[g]\varepsilon_{g^{-1}}P_{g_jX} \\
     & = & \displaystyle \sum_{j=1}^n[\![g^{-1}\in g_jX]\!][g]P_{g_jX} =  \displaystyle \sum_{j=1}^n[\![g_j^{-1}g^{-1}\in X]\!][g]P_{g_jX}.
\end{eqnarray*}

Since $ X = \dot{\bigcup}_{i=1}^{n} \;  G_Xg_i^{-1}$, we have
\begin{eqnarray*}
     & \, & [\![g_j^{-1}g^{-1}\in X]\!]  =  \displaystyle \sum_{i=1}^{n} [\![g_j^{-1}g^{-1}\in G_Xg_i^{-1}]\!] \\
     & = & \displaystyle \sum_{i=1}^{n} [\![g_j^{-1}g^{-1}g_i \in G_X]\!]  = \displaystyle \sum_{i=1}^{n} [\![g_i^{-1}gg_j \in G_X]\!]. 
\end{eqnarray*}

Then one conclude that
\begin{equation}\label{equation.gGamma.soma.gijPgjX}
    [g]\Gamma_X = \sum_{i,j = 1}^n [\![g_i^{-1}gg_j \in G_X]\!][g]P_{g_jX}.
\end{equation}
If we denote $g_{ij}:=g_i^{-1}gg_j$, we can rewrite (\ref{equation.gGamma.soma.gijPgjX}) as
\[
[g]\Gamma_X = \sum_{i,j = 1}^n[\![g_{ij} \in G_X]\!][g_ig_{ij}g_j^{-1}]P_{g_jX}.
\]

The previous equation leads us the following result.
\begin{theorem}\label{theorem.kparGGamma.iso.MnKG}
    If $G$ is a finite group, and $X \in \mathcal{P}_e(G)$, then 
    $$ \kpar G\, \Gamma_X \simeq M_{n}(\Bbbk G_X), $$
    as algebras, where $n = \frac{|X|}{|G_X|}$.
\end{theorem}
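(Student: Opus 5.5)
The plan is to write down an explicit algebra map $\Phi : M_n(\Bbbk G_X) \rightarrow \kpar G\,\Gamma_X$ on a basis of matrix units tensored with group elements, and then show it is a bijective homomorphism. Fix the coset representatives $g_1,\ldots,g_n$ with $X=\dot{\bigcup}_i G_X g_i^{-1}$ and $g_i^{-1}\in X$, as in the discussion preceding the theorem. For $k\in G_X$ and $1\le i,j\le n$ set
\[
\Phi(E_{ij}\otimes k)= [g_i k g_j^{-1}]\,P_{g_jX} ,
\]
where $E_{ij}$ are matrix units. Working through the isomorphism $\lambda$ of Theorem \ref{isomorfismoprincipal}, this element is $\lambda^{-1}(g_ikg_j^{-1}, g_jX)$. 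It is a genuine arrow of $\Gamma(G)$, because $(g_ikg_j^{-1})^{-1}=g_jk^{-1}g_i^{-1}\in g_jX$: indeed $k^{-1}g_i^{-1}\in G_Xg_i^{-1}\subseteq X$. Its target is $g_ikg_j^{-1}g_jX=g_ikX=g_iX$.

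\textbf{Homomorphism.} Compute the product $\Phi(E_{ij}\otimes k)\Phi(E_{lm}\otimes k')$ in the groupoid algebra. It is nonzero exactly when the source $g_jX$ equals the target $g_lX$. That happens iff $g_l^{-1}g_j\in G_X$, and by disjointness of the cosets this forces $j=l$. When $j=l$ the product is $(g_ikk'g_m^{-1},g_mX)=\Phi(E_{im}\otimes kk')$. This matches $(E_{ij}\otimes k)(E_{lm}\otimes k')=\delta_{jl}E_{im}\otimes kk'$. For unitality, $\sum_i\Phi(E_{ii}\otimes e)=\sum_i P_{g_iX}=\Gamma_X$, using $\Gamma_X=P_{g_1X}+\cdots+P_{g_nX}$. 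Each image lies in $\kpar G\,\Gamma_X$ by Proposition \ref{GammaX}.

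\textbf{Bijectivity.} For surjectivity, $\kpar G\,\Gamma_X$ is spanned by the elements $[g]P_{g_jX}$ with $g^{-1}\in g_jX$, by Proposition \ref{px} and $\Gamma_X=\sum_j P_{g_jX}$. This is exactly the content of equation (\ref{equation.gGamma.soma.gijPgjX}) rewritten as $[g]\Gamma_X=\sum_{i,j}[\![g_{ij}\in G_X]\!][g_ig_{ij}g_j^{-1}]P_{g_jX}$. Each such summand is $\Phi(E_{ij}\otimes g_{ij})$, so $\Phi$ is onto. For injectivity, the images of the $n^2|G_X|$ basis elements are distinct arrows of $\Gamma(G)$, hence linearly independent in $\Bbbk\Gamma(G)\cong\kpar G$. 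Distinctness holds because the source $g_jX$ determines $j$, the target $g_iX$ determines $i$, and then the group component $g_ikg_j^{-1}$ determines $k$.

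Finally, $n=|X|/|G_X|$ because $X$ is a disjoint union of $n$ right cosets, each of size $|G_X|$. The main obstacle is the careful bookkeeping of the coset representatives. Two points need checking: that every arrow of the connected component of $X$ has the form $(g_ikg_j^{-1},g_jX)$ with a unique triple $(i,j,k)$, and that the sets $g_iX$ are pairwise distinct. Both reduce to the identity $g_iX=g_jX\iff g_j^{-1}g_i\in G_X$ combined with the disjointness of the cosets $G_Xg_i^{-1}$. The cleanest route is to establish this identity first, then run all the computations inside $\Bbbk\Gamma(G)$ and transport the result back via $\lambda^{-1}$.
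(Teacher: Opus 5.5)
Your argument is correct, but it runs in the opposite direction from the paper's. The paper first checks that $g\mapsto\sum_{i,j}\bool{g_i^{-1}gg_j\in G_X}\,g_i^{-1}gg_j\,E_{ij}$ satisfies (PR1)--(PR3). It then obtains $\varphi_X:\kpar G\to M_n(\Bbbk G_X)$ from the universal property and shows $\varphi_X(P_{g_iX})=E_{ii}$, so that $\varphi_X$ factors through $\kpar G\,\Gamma_X$. Finally it uses the map $\psi_X$ as a left inverse, and $\psi_X$ is exactly your $\Phi$. You instead make $\Phi=\psi_X$ the primary map. You verify its multiplicativity, unitality and bijectivity inside $\Bbbk\Gamma(G)$ via $\lambda$, where everything reduces to $g_iX=g_jX\iff i=j$ and to the linear independence of distinct arrows.

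Your route is shorter and also tighter. The paper verifies $\psi_X\circ\varphi_X=\mathrm{id}$ only on the elements $[g]\Gamma_X$. These generate $\kpar G\,\Gamma_X$ as an algebra but do not span it: for $n>1$, no $P_{g_iX}$ lies in their span. So the multiplicativity of $\psi_X$ that you prove is precisely what that step tacitly requires.

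The paper's route, in turn, is meant as an alternative derivation of the decomposition that avoids leaning on the groupoid basis $\{[g]P_Y\}$ supplied by Theorem \ref{isomorfismoprincipal}; in principle it needs only the universal property and the relations of Proposition \ref{px}. It also yields the explicit formula for $\varphi_X([g])$, which is reused in the proof of Theorem \ref{simplemodules} to describe the action of $[g]$ on $(\mathbb{V}^{(\alpha)})^n$.
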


\begin{proof}
    As we already discussed, there exist $g_1, ..., g_n \in G$ such that
    \begin{center}
        \begin{enumerate}[(1)]
        \item $X = \dot{\bigcup}_{i=1}^{n} \;  G_Xg_i^{-1}$;
        \item[]
        \item $g_i^{-1}g_j \notin G_X, \textrm{ if } i\neq j$;
        \item[] 
        \item $\Gamma_X =\displaystyle \sum_{j=1}^nP_{g_jX}$.
    \end{enumerate}
    \end{center}
    We are going to construct an isomorphism $\varphi:\kpar G\, \Gamma_X \to M_{n}(\Bbbk G_X)$. To do that, consider the following map 
    $$\begin{array}{cccl}
        \pi: & \Bbbk G & \longrightarrow & M_{n}(\Bbbk G_X) \\
         & g & \longmapsto & \displaystyle \sum_{i,j=1}^n\left(\bool{g_i^{-1}gg_j\in G_X}g_i^{-1}gg_j\right)E_{ij}
    \end{array}$$
    where $\{E_{ij}\}_{1\leq i,j \leq n}$ is the standard basis of the $\Bbbk G_X$-free module $M_{n}(\Bbbk G_X)$. Notice that 
    \[
        \pi(e) = \sum_{i,j=1}^n\left(\bool{g_i^{-1}g_j\in G_X}g_i^{-1}g_j\right)E_{ij} = \sum_{i=1}^n e E_{ii} = e I_n.
    \]
    For any $g \in G$, we have
\[
\pi(g)\pi(g^{-1})
=
\sum_{i,j=1}^n
\left(
\sum_{k=1}^n
\bool{g_i^{-1}gg_k\in G_X}
\bool{g_k^{-1}g^{-1}g_j\in G_X}
g_i^{-1}g_j
\right)E_{ij}.
\]
Observe that if
\(
g_i^{-1}gg_k,; g_k^{-1}g^{-1}g_j \in G_X,
\)
then
\(
g_i^{-1}g_j
=
(g_i^{-1}gg_k)(g_k^{-1}g^{-1}g_j)
\in G_X.
\)
Since $G_X$ contains no elements of the form $g_i^{-1}g_j$ for $i\neq j$, it follows that $i=j$. Therefore,
\begin{eqnarray*} & \, & \pi(g)\pi(g^{-1}) = \displaystyle \sum_{i=1}^n\left(\sum_{k=0}^n\bool{g_i^{-1}gg_{k}\in G_X}\bool{g_k^{-1}g^{-1}g_i \in G_X} e\right)E_{ii} \\ & = & \displaystyle \sum_{i=1}^n\left(\sum_{k=0}^n\bool{g_i^{-1}gg_{k}\in G_X}e\right)E_{ii} = \displaystyle \sum_{i=1}^n\left(\sum_{k=0}^n\bool{g_i^{-1}g\in G_Xg_k^{-1}}e\right)E_{ii} \\ & = & \displaystyle \sum_{i=1}^n\left(\bool{g_i^{-1}g\in X}e\right)E_{ii} = \displaystyle \sum_{i=1}^n\left(\bool{g\in g_iX}e\right)E_{ii}. \\ \end{eqnarray*}

Using the formula above, we obtain
\begin{eqnarray*} & \, & \pi(g^{-1})\pi(gh) = \displaystyle \sum_{i,j=1}^n\left(\sum_{k=1}^n\bool{g_i^{-1}g^{-1}g_k\in G_X}\bool{g_k^{-1}ghg_j\in G_X}g_i^{-1}hg_j\right)E_{ij} \\ & = & \displaystyle \sum_{i,j=1}^n\left(\sum_{k=1}^n\bool{g_i^{-1}g^{-1}g_k\in G_X}\right)\bool{g_j^{-1}hg_j\in G_X}g_i^{-1}hg_jE_{ij} \\ & = & \displaystyle \sum_{i,j=1}^n\bool{g_i^{-1}g^{-1}\in \dot{\cup}_{i=1}^{n}G_Xg^{-1}_k}\bool{g_j^{-1}hg_j\in G_X}g_i^{-1}hg_jE_{ij} \\ & = & \displaystyle \sum_{i,j=1}^n\bool{g^{-1}\in g_iX}\bool{g_j^{-1}hg_j\in G_X}g_i^{-1}hg_jE_{ij} = \pi(g^{-1})\pi(g)\pi(h); \\ \end{eqnarray*}
A similar computation shows that
\(
\pi(g)\pi(h)\pi(h^{-1})
=
\pi(gh)\pi(h^{-1}).
\)
Hence, $\pi:\Bbbk G\to M_n(\Bbbk G_X)$ is a partial representation. By the universal property of $\kpar G$, there exist an algebra map $\varphi_X:\kpar G \to M_{n}(\Bbbk G_X)$, such that $\varphi_X([g]) = \pi(g)$, for all $g \in G$. Now notice that, for all $g \in G$, we have 
    \[
    \varphi_X(\varepsilon_g) = \varphi_X([g][g^{-1}]) = \pi(g)\pi(g^{-1}) = \sum_{i=1}^n\left(\bool{g\in g_iX}e\right)E_{ii}.
    \]
    Now for each $i \in \{1, ..., n\}$ notice that
    \[
    \begin{array}{ccl}
        \varphi_X(P_{g_iX}) & = & \displaystyle\prod_{r\in g_iX}\varphi_X(\varepsilon_r)\prod_{s\notin g_iX}\varphi_X(\mathbbm{1}-\varepsilon_s) \\
         & = & \displaystyle \sum_{j=1}^n\left(\prod_{r\in g_iX}\bool{r \in g_jX}\prod_{s\notin g_iX}\left(1-\bool{s\in g_jX}\right)e \right)E_{jj} \\
         & = & \displaystyle \sum_{j=1}^n\left(\bool{g_iX \subseteq g_jX}\prod_{s\notin g_iX}\left(\bool{s\notin g_jX}\right)e \right)E_{jj} \\
         & = & \displaystyle \sum_{j=1}^n\bigl(\bool{g_iX \subseteq g_jX}\bool{g_jX\subseteq g_iX}e \bigr)E_{jj} \ = \ e E_{ii}.
    \end{array}
    \]
    For any $\lambda \in \kpar G$, 
    \begin{equation}\label{equation.varphiX.projection}
        \varphi_X(\lambda) = \varphi_X(\lambda)\sum_{i=1}^nE_{ii} = \varphi_X\left(\lambda\sum_{i=1}^n P_{g_iX}\right) = \varphi_X(\lambda\Gamma_X),
    \end{equation}
    
    hence for $Y \in \mathcal{P}_e(G)$, if $X \nsim Y$ then $\varphi_X(P_Y) = 0$, in fact, 
    \[ \varphi_X(P_Y) = \varphi_X(P_Y\Gamma_X) \stackrel{\textrm{Prop }\ref{GammaX}}{=} 0 .\]
    The Equation (\ref{equation.varphiX.projection}), tell us that $\varphi_X$ can be regarded as a map defined on $\kpar G\, \Gamma_X$, namely,  
    \[
    \begin{array}{cccl}
        \varphi_X: & \kpar G\, \Gamma_X & \longrightarrow & M_n(\Bbbk G_X) \\
         & [g]\Gamma_X & \longmapsto & \displaystyle \pi(g) = \sum_{i,j=1}^n\left(\bool{g_i^{-1}gg_j\in G_X}g_i^{-1}gg_j\right)E_{ij}.
    \end{array}
    \]
    Now we are going to prove that $\varphi_X$ is an isomorphism. For any $i \in \{1,...,n\}$,  $\varphi_X(P_{g_iX}) = E_{ii}$, hence $\varphi_X$ is surjective. Now, to prove that this map is injective, we are going to build a left inverse.
    Consider the following map
    $$
    \begin{array}{rcl}
        \psi_X: \ M_n(\Bbbk G_X) & \longrightarrow & \kpar G\, \Gamma_X, \\
          \displaystyle \sum_{i,j=1}^ng_{ij}E_{ij} & \longmapsto & \displaystyle \sum_{j=1}^n [g_ig_{ij}g_j^{-1}]P_{g_jX}
    \end{array}
    $$
    
    The map $\psi_X$ is indeed a left inverse for $\varphi_X$: take $g \in G$ and notice that 
    \[
    \begin{array}{ccl}
        (\psi_X\circ\varphi_X)([g]\Gamma_X) & = & \displaystyle \psi_X\left(\sum_{i,j=1}^n\left(\bool{g_i^{-1}gg_j\in G_X}g_i^{-1}gg_j\right)E_{ij}\right) \\
         & = & \displaystyle \sum_{j=1}^n\bool{g_i^{-1}gg_j\in G_X}[g_ig_i^{-1}gg_jg_j^{-1}]P_{g_jX} \\
         & = & \displaystyle \sum_{j=1}^n\bool{g_i^{-1}gg_j\in G_X}[g]P_{g_jX} \\
         & \stackrel{(\ref{equation.gGamma.soma.gijPgjX})}{=} & [g]\Gamma_X,
    \end{array}
    \]
    which concludes the proof that $\varphi_X: \kpar G\, \Gamma_X \to M_n(\Bbbk G_X)$ is an isomorphism.   
\end{proof}

Now, let $T\subseteq\mathcal{P}_e(G)$ be a set of representatives of the equivalence classes under $\sim$; that is, $Y\not\sim Z$ for distinct $Y,Z\in T$, and for every $X\in\mathcal{P}_e(G)$, there exists $Y\in T$ such that $X\sim Y$. Then
    \[
    \kpar G \simeq \bigoplus_{X\in T} \kpar G\, \Gamma_X \simeq \bigoplus_{X\in T} M_{n_X}(\Bbbk G_X),
    \]
    where $n_X = \frac{|X|}{|G_X|}$.
    
    \begin{remark}{ The isomorphism above is not new; it was already established in \cite{DEP}. However, the construction presented here is more suitable for our purposes. Indeed, it makes explicit the role of the idempotents $P_X$ and $\Gamma_X$ in the decomposition of $\kpar G$, and the isomorphism
\(
\varphi_X:\kpar G\, \Gamma_X\to M_{n_X}(\Bbbk G_X)
\)
is constructed explicitly in terms of these same idempotents. This makes the correspondence between the $\Bbbk G_X$-modules and the $\kpar G$-modules more transparent, while keeping track of the idempotent decomposition of $\kpar G$.}
\end{remark}

\subsection{Weak Hopf algebras}
\begin{definition} \cite{BNSweak}\label{definicaobifraca}
	A weak bialgebra is a quintuple  $(H, \mu ,u , \Delta, \epsilon )$ in which:
	\begin{itemize}
		\item[$(1)$] $(H, \mu , u)$ is a $\Bbbk$-algebra.
		\item[$(2)$] $(H, \Delta , \epsilon)$ is a $\Bbbk$-coalgebra.
		\item[$(3)$] $\Delta(xy)=\Delta(x)\Delta(y)$, for every $x,y$ in $H$.
            \item[$(4)$]$\epsilon (xyz) = \epsilon(xy_{(1)}) \epsilon(y_{(2)}z) = \epsilon(xy_{(2)})\epsilon(y_{(1)}z) $, for every $x,y,z$ in $H$.
		\item[$(5)$] $(\Delta(1) \otimes 1)(1 \otimes \Delta(1)) = 1_{(1)} \otimes 1_{(2)} \otimes 1_{(3)} = (1 \otimes \Delta (1))(\Delta(1) \otimes 1) $. Using the Sweedler notation, one can rewrite this condition as 
\[ 1_{(1)} \otimes 1_{(2)} 1_{(1')}\otimes 1_{(2')} = 1_{(1)} \otimes 1_{(2)} \otimes 1_{(3)} = 1_{(1)} \otimes 1_{(1')} 1_{(2)} \otimes 1_{(2')}.\]
	\end{itemize}
\end{definition}
Observe that every bialgebra is automatically a weak bialgebra. In fact, a weak bialgebra is a bialgebra if, and only if $\Delta (1)=1\otimes 1$, or equivalently $\epsilon (xy)=\epsilon (x) \epsilon (y)$, for all $x,y \in H$

\begin{definition}\cite{BNSweak}\label{subalgebra-Ht-Hs} Given a weak bialgebra $H$, we define linear maps $\epsilon_s ,\epsilon_t, \epsilon_s', \epsilon_t':H\rightarrow H$ given, respectively, by
\begin{eqnarray*}
\epsilon_s (h) =1_{(1)}\epsilon(h1_{(2)}) \quad & , & \quad \epsilon_t (h)= \epsilon(1_{(1)}h)1_{(2)} \\
\epsilon_s' (h) =1_{(1)}\epsilon(1_{(2)}h) \quad & , & \quad \epsilon_t' (x)=\epsilon(h1_{(1)})1_{(2)}.
\end{eqnarray*}
\end{definition}
These maps define two subspaces of $H$, namely $H_s:=Im(\epsilon_s)=Im(\epsilon_s')$ and $H_t := Im(\epsilon_t)=Im (\epsilon_t')$. 

These are idempotent maps in $\text{End}(H)$ with respect to the composition and preserve the unit $1_H$. One can easily see that $\Delta(1) \in H_s \otimes H_t$ and that the subspaces $H_s$ and $H_t $ are mutually commutative subalgebras of $H$, that is, for any $x\in H_s$ and $y\in H_t$, we have $xy=yx$. Moreover one can prove that the subalgebras $H_s$ and $H_t$ are Frobenius separable algebras over the base field $\Bbbk$, in particular, it implies that $H_s$ and $H_t$ are finite dimensional (see, for example, \cite{BWcorings} Theorem 36.8).

\begin{definition} \cite{BNSweak}\label{defhopffraca}
A  weak Hopf algebra is a sextuple $(H, \mu , u, \Delta, \epsilon, S)$, in which $(H, \mu , u, \Delta, \epsilon)$ is a weak bialgebra and  $S:H \rightarrow H$ is a $\Bbbk$-linear map satisfying
\begin{itemize}
	\item [$i)$] $\epsilon_s(h)= S(h_{(1)})h_{(2)}$;
	\item [$ii)$] $\epsilon_t(h ) = h_{(1)} S(h_{(2)}) $;
	\item[$iii)$] $S(h)= S(h_{(1)})h_{(2)} S(h_{(3)}) $.
\end{itemize}
\end{definition}
The map $S$ is called antipode and one can verify its uniqueness.

There is another useful characterization of Weak Hopf algebras given by the following result:

\begin{theorem}\label{Teorema-Schauenburg}\cite{Schauenburg}
Let $H$ be a weak bialgebra, then $H$ is a weak Hopf algebra if, and only if the canonical map 
$$\begin{matrix}
	can: H \otimes_{H_s} H &\longrightarrow& H \otimes H \\
	\ \  \  h \otimes k &  \longmapsto& \ \ h_{(1)} \otimes h_{(2)}k,
\end{matrix}$$
induces the isomorphism $H\otimes_{H_s} H \cong \Delta (1)  (H \otimes H)=:H\boxtimes H$.
\end{theorem}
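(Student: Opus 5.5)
The plan is to prove the two implications separately, after first checking that the canonical map makes sense. Throughout I will use the standard weak bialgebra identities of \cite{BNSweak}:
\[
\Delta(x)=1_{(1)}\otimes x1_{(2)} \ (x\in H_s), \qquad h_{(1)}\otimes \epsilon_t(h_{(2)})=1_{(1)}h\otimes 1_{(2)}, \qquad h_{(1)}\epsilon_s(h_{(2)})=h .
\]
I will also use that $\epsilon_s(xa)=\epsilon_s(x)a$ for $a\in H_s$.

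\textbf{Step 1: well-definedness.} For $a\in H_s$ the first identity gives
\[
(ha)_{(1)}\otimes (ha)_{(2)}k=h_{(1)}1_{(1)}\otimes h_{(2)}a1_{(2)}k=h_{(1)}\otimes h_{(2)}ak,
\]
so $can$ is balanced over $H_s$. Its image lies in $H\boxtimes H$ because $\Delta(h)=\Delta(1)\Delta(h)$.

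\textbf{Step 2: a weak Hopf algebra has bijective $can$.} Assume $S$ exists. Define $\theta:H\otimes H\to H\otimes_{H_s}H$ by $\theta(h\otimes k)=h_{(1)}\otimes S(h_{(2)})k$, and take the inverse to be $\theta$ restricted to $H\boxtimes H$. In one direction, axiom (ii) and the second identity give
\[
can\,\theta(h\otimes k)=h_{(1)}\otimes \epsilon_t(h_{(2)})k=\Delta(1)(h\otimes k),
\]
which is the identity on $H\boxtimes H$. In the other direction, axiom (i), the fact that $\epsilon_s(h_{(2)})\in H_s$ can be moved across $\otimes_{H_s}$, and the third identity give
\[
\theta\, can(h\otimes k)=h_{(1)}\otimes\epsilon_s(h_{(2)})k=h_{(1)}\epsilon_s(h_{(2)})\otimes k=h\otimes k.
\]
Hence $can$ is an isomorphism onto $H\boxtimes H$.

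\textbf{Step 3: bijective $can$ gives an antipode.} Assume $can$ is bijective onto $H\boxtimes H$ and write $can^{-1}(\Delta(1)(h\otimes 1))=h^{[1]}\otimes_{H_s}h^{[2]}$. The map $x\otimes y\mapsto \epsilon_s(x)y$ is $H_s$-balanced, since $\epsilon_s(xa)=\epsilon_s(x)a$. So I define
\[
S(h):=\epsilon_s(h^{[1]})\,h^{[2]}.
\]
This is modelled on the Hopf case: if an antipode $S_0$ already existed, then $h^{[1]}\otimes h^{[2]}=h_{(1)}\otimes S_0(h_{(2)})$, and $\epsilon_s(h_{(1)})S_0(h_{(2)})=S_0(h)$ by axiom (iii). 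The defining property of $can^{-1}$ reads
\[
h^{[1]}{}_{(1)}\otimes h^{[1]}{}_{(2)}h^{[2]}=1_{(1)}h\otimes 1_{(2)} .
\]
From it I would derive axioms (i)--(iii) as follows. For (ii), apply $\mathrm{id}\otimes \epsilon$-type contractions to this property. For (i), apply $can^{-1}$ to $\Delta(h_{(1)})(1\otimes h_{(2)})$ and compare with $h\otimes 1$. Axiom (iii) should then follow from (i) and (ii), as in the weak Hopf literature.

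\textbf{Main obstacle.} I expect the hardest part to be Step 3. Specifically, it is showing $S(h_{(1)})h_{(2)}=\epsilon_s(h)$, and checking that the candidate $S$ is independent of choices. Both require controlling how $can^{-1}$ interacts with the non-unital $\Delta(1)$. My first attempt would be to prove the auxiliary identity $can^{-1}(h_{(1)}\otimes h_{(2)}k)=h\otimes k$ together with the $H$-linearity
\[
can^{-1}\big((x\otimes 1)\,\xi\,(1\otimes y)\big)=x\,can^{-1}(\xi)\,y \quad (\xi\in H\boxtimes H),
\]
and to reduce everything to the equations $h_{(1)}\epsilon_s(h_{(2)})=h$ and $\epsilon_s(xa)=\epsilon_s(x)a$.
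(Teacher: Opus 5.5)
The paper gives no proof of this statement; it is quoted from Schauenburg. Your Steps 1 and 2 are correct and settle the direction ``weak Hopf $\Rightarrow$ $can$ bijective onto $H\boxtimes H$''. The converse, Step 3, is not proved, and the plan you sketch would not close it.

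\textbf{The gap in Step 3.} Your candidate $S(h)=\epsilon_s(h^{[1]})h^{[2]}$ is the right one. It is automatically well defined, since no choices are involved: $can^{-1}$ is a map and $x\otimes y\mapsto\epsilon_s(x)y$ is $H_s$-balanced. So ``independence of choices'' is not an issue. The missing ingredient is the identity $can^{-1}(\Delta(1)(h\otimes 1))=h_{(1)}\otimes_{H_s}S(h_{(2)})$. Without it, contracting your defining relation $h^{[1]}{}_{(1)}\otimes h^{[1]}{}_{(2)}h^{[2]}=1_{(1)}h\otimes 1_{(2)}$ with $\epsilon\otimes\mathrm{id}$ only gives $h^{[1]}h^{[2]}=\epsilon_t(h)$, which is not axiom (ii). The tool you propose to bridge this, left linearity $can^{-1}((x\otimes 1)\xi)=x\,can^{-1}(\xi)$, is false. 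In fact $can(xh\otimes k)=\Delta(x)\,can(h\otimes k)$. Already for $H=\Bbbk G$, where $can^{-1}(g\otimes k)=g\otimes g^{-1}k$, the identity fails, and in general $(x\otimes 1)\xi$ need not even lie in $H\boxtimes H$. Only right linearity $can^{-1}(\xi(1\otimes y))=can^{-1}(\xi)y$ survives.

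\textbf{The fix: left $H$-colinearity.}
\begin{enumerate}
\item The map $\Delta\otimes\mathrm{id}$ is well defined on $H\otimes_{H_s}H$, because $\Delta(xa)=x_{(1)}\otimes x_{(2)}a$ for $a\in H_s$.
\item Axiom (5) and the commutation of $H_s$ with $H_t$ give $(\Delta\otimes\mathrm{id})(\Delta(1)(h\otimes k))=h_{(1)}\otimes\Delta(1)(h_{(2)}\otimes k)$.
\item Clearly $(\Delta\otimes\mathrm{id})\circ can=(\mathrm{id}\otimes can)\circ(\Delta\otimes\mathrm{id})$, so the same intertwining holds for $can^{-1}$.
\item Write $\tau(h)=can^{-1}(\Delta(1)(h\otimes 1))$ and $m_s(x\otimes y)=\epsilon_s(x)y$. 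Apply $\mathrm{id}\otimes m_s$ to $(\Delta\otimes\mathrm{id})\tau(h)=h_{(1)}\otimes\tau(h_{(2)})$. Then project back to $H\otimes_{H_s}H$, where $x_{(1)}\otimes\epsilon_s(x_{(2)})y=x\otimes y$. This yields $\tau(h)=h_{(1)}\otimes_{H_s}S(h_{(2)})$.
\end{enumerate}

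\textbf{The axioms then follow quickly.}
\begin{enumerate}
\item Applying $m_s$ to $\tau(h)$ gives $S(h)=\epsilon_s(h_{(1)})S(h_{(2)})$.
\item Applying $\epsilon\otimes\mathrm{id}$ to $can(\tau(h))=\Delta(1)(h\otimes 1)$ gives (ii).
\item Right linearity gives $h\otimes 1=can^{-1}(\Delta(h))=h_{(1)}\otimes S(h_{(2)})h_{(3)}$. Applying $m_s$ gives $\epsilon_s(h)=S(h_{(1)})h_{(2)}$, which is (i).
\item Axiom (iii) is (i) combined with $S(h)=\epsilon_s(h_{(1)})S(h_{(2)})$.
\end{enumerate}
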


\begin{example} Every Hopf algebra is a weak Hopf algebra. In fact, a weak Hopf algebra $H$ is a Hopf algebra if, and only if $\Delta (1)=1\otimes 1$, or, equivalently, if and only if for every $h,k\in H$, $\epsilon (hk)=\epsilon (h) \epsilon (k)$.
\end{example}

For our purposes, the paradigmatic example of a weak Hopf algebra is the algebra of a finite groupoid.

\begin{example}\label{exemplo-kgama-bialgebrafraca}
    Let $\Gamma$ be a groupoid with finitely many objects, then its groupoid algebra is 
    \[
    \Bbbk \Gamma =\{ \sum_{\gamma \in \Gamma} a_{\gamma} \gamma \; | \; a_\gamma \in \Bbbk, \quad \textrm{finite sums} \}. 
    \]
    Note that the elements $\gamma \in \Gamma$ forms a basis for the vector space $\Bbbk \Gamma$. The algebra structure is given, for the basis elements, by the multiplication of the groupoid 
    \[
    \gamma.\delta=\left\{ \begin{array}{lcr} \gamma \delta, & \text{if} &(\gamma ,\delta) \in \Gamma^{(2)}, \\
    0,& & \text{otherwise,}\end{array} \right.
    \]
    and extended linearly. The unit is $1_{\Bbbk \Gamma} =\sum_{x\in \Gamma^{(0)}} x$. For the sake of simplicity, we are using the identification $\Gamma^{(0)}=u(\Gamma^{(0)})\subseteq \Gamma$. {Concerning the coalgebra structure, the comultiplication, counit are,  respectively, given by $\Delta (\gamma)=\gamma \otimes \gamma$,  $\epsilon(\gamma)=1_{\Bbbk}$. The antipode writes $S(\gamma)=\gamma^{-1}$, for all $\gamma \in \Bbbk \Gamma$. With these operations, $\Bbbk \Gamma$ is a weak Hopf algebra.} From these definitions, the comultiplication of the unit results in
    \[
    \Delta (1_{\Bbbk \Gamma})=\sum_{x\in \Gamma^{(0)}} x\otimes x.
    \]
    In this situation, we have, for any $\gamma \in \Gamma$
    \[
    \epsilon_s (\gamma) =\sum_{x\in \Gamma^{(0)}} x\epsilon (\gamma x)=s(\gamma)=\epsilon_t' (\gamma) ,
    \]
    and
    \[
    \epsilon_t (\gamma) =\sum_{x\in \Gamma^{(0)}} \epsilon (x\gamma )x=t(\gamma)=\epsilon_s' (\gamma) .
    \]
    As $H=\Bbbk \Gamma$ is cocommutative, the subalgebras $H_s$ and $H_t$ coincide with the commutative algebra $\Bbbk \Gamma^{(0)}$. 

    In particular, for a finite group $G$, the groupoid algebra $\Bbbk \Gamma (G)$ (see Proposition \ref{gammaG}) is a weak Hopf algebra.
\end{example}

\section{The isomorphism of Hopf algebroids}

The isomorphism $\lambda : \Bbbk_{par}G\rightarrow \Bbbk\Gamma(G)$ in Theorem \ref{isomorfismoprincipal} goes beyond the simple isomorphism of algebras. In fact, we can prove that $\lambda$ is an isomorphism of Hopf algebroids. Let us briefly recall the definitions of a bialgebroid and of a Hopf algebroid.

\begin{definition}\cite{Bohm}
	Given a $\Bbbk$-algebra $A$, a left (resp. right) bialgebroid over $A$ is a  sextuple  $(\mathcal{H}, A, s_l, t_l, \underline{ \Delta }_l , {\underline{ \epsilon }}_l)$ (resp.  $ (\mathcal{H}, A, s_r, t_r, \underline{ \Delta }_r , \underline{ \epsilon }_r)$) in which:
	\begin{itemize}
		\item[$ (1) $] $\mathcal{H}$ is a $\Bbbk$-algebra.
		\item[$ (2) $] The map $s_l:A\rightarrow \mathcal{H}$ (resp. $s_r: A\rightarrow \mathcal{H}$) is an algebra morphism. The map $t_l:A\rightarrow \mathcal{H}$ (resp. $t_r:A\rightarrow \mathcal{H}$) is an antimorphism of algebras. Moreover for every  $a, b \in A$, we have $s_l(a)t_l(b)=t_l(b)s_l(a)$ (resp.  $s_r(a) t_r(b)=t_r(b)s_r(a)$). The algebra $\mathcal{H}$ is endowed with a bimodule structure over $A$ by the maps $s_l,t_l$ (resp. $s_r, t_r$) by $a \triangleright h \triangleleft b = s_l(a)t_l(b)h$ (resp. $a \triangleright h \triangleleft b = hs_r (b) t_r(a)$), 
  The maps $s_l$ and $t_l$ are usually called (left) source and (left) target, respectively, the maps $s_r$ and $t_r$ are called (right) source and (right) target. 
		\item[$ (3) $] The triple $(\mathcal{H} , \underline{ \Delta }_l , \underline{ \epsilon }_l )$ (respect. $( \mathcal{H}, \underline{ \Delta }_r , \underline{ \epsilon }_r )$) is an $A$-coring relative to the structure of $A$-bimodule given by $s_l$ and $t_l$ (resp. $s_r$ and $t_r$).
		\item[$ (4) $] The comultiplication $\underline{ \Delta }_l$ (respect. $\underline{ \Delta }_r$) has its image in the Takeuchi algebra
		$$\mathcal{H} {\times }_A \ \mathcal{H} = \big\{ \sum_{i} h_i \otimes k_i \in \mathcal{H}\otimes_{A} \mathcal{H} : \sum_i h_i t_l(a) \otimes k_i=\sum_i h_i \otimes k_i s_l(a) \ \forall a \in A \big\}$$
		(resp.
		$$ \mathcal{H}\ { }_A\times \mathcal{H} = \big\{ \sum_{i} h_i \otimes k_i \in \mathcal{H}\otimes_{A} \mathcal{H} : \sum_i s_r (a) h_i  \otimes k_i=\sum_i h_i \otimes t_r(a) k_i  \ \forall a \in A \big\} ),$$
		and it is a morphism of algebras.
		\item[$ (5) $] For every $h,k \in \mathcal{H}$, we have
		$$\underline{ \epsilon }_l (hk) = \underline{ \epsilon }_l( h s_l(\underline{ \epsilon}_l (k) ) )= \underline{ \epsilon }_l ( ht_l( \underline{ \epsilon }_l (k)))$$
		\big(resp.
		$$ \underline{ \epsilon }_r (hk) = \underline{ \epsilon }_r ( s_r (\underline{ \epsilon}_r(h) )k )= \underline{ \epsilon }_r ( t_r( \underline{ \epsilon }_r (h))k)\ \big).$$
		
	\end{itemize}
\end{definition} 

\begin{definition}\cite{Bohm}\label{def-morfismo-HOPF-ALGEBROIDE}
    Let $\mathcal{H}_l=(\mathcal{H}, A, s, t, \underline{ \Delta }_l , {\underline{ \epsilon }}_l)$ and $\mathcal{H}_l'=(\mathcal{H}', A', s', t', \underline{ \Delta }_l' , {\underline{ \epsilon }}_l')$ be two left $A$-bialgebroids. A morphism of left $A$-bialgebroids between $\mathcal{H}$ and $\mathcal{H}'$ is a pair of linear maps $$\varPhi: \mathcal{H} \longrightarrow \mathcal{H}', \ \ \  \phi: A \longrightarrow A'$$
    such that,
    \begin{align*}
        s' \circ \phi &= \varPhi \circ s  \\
        t' \circ \phi &=  \varPhi \circ t \\
        \underline{\epsilon}_l' \circ \varPhi &= \phi \circ \underline{\epsilon}_l  \\
        \underline{\Delta}_l ' \circ \varPhi &= (\varPhi \otimes \varPhi) \circ \underline{\Delta}_l .
    \end{align*}
\end{definition}

The definition of a morphism of right $A$-bialgebroids is completely analogous.

\begin{definition}\cite{Bohm}\label{definicao-HOPF-ALGEBROIDE}
	Consider two algebras $A$ and $\tilde{A}$ which are antiisomorphic, that is, $A \cong \tilde{A}^{op}$. Consider also a $\Bbbk$-algebra $\mathcal{H}$ such that $\mathcal{H}_l:=(\mathcal{H}, A, s_l, t_l , \underline{\Delta}_l , \underline{\epsilon}_l)$ is a left $A$-bialgebroid and  $\mathcal{H}_r:=(\mathcal{H}, \tilde{A}, s_r, t_r , \underline{\Delta}_r , \underline{\epsilon}_r)$ is a right $\tilde{A}$-bialgebroid. We define a Hopf algebroid structure on $\mathcal{H}$ as a triple $(\mathcal{H}_l,\mathcal{H}_r, \mathcal{S})$, in which $\mathcal{S}:\mathcal{H} \longrightarrow \mathcal{H}$ is an algebra antimorphism such that, 

\begin{itemize}
	\item[$(i)$] $s_l \circ \underline{\epsilon}_l \circ t_r= t_r$, $t_l \circ \underline{ \epsilon}_l \circ s_r=s_r$, $s_r \circ \underline{ \epsilon}_r \circ t_l = t_l$, $t_r\circ \underline{ \epsilon }_r \circ s_l = s_l$;
	\item[$(ii)$] $(\underline{ \Delta}_l \otimes_{\tilde{A}} \mathcal{H} )\circ \underline{ \Delta}_r = ( \mathcal{H} \otimes _{A} \underline{ \Delta}_r )\circ \underline{\Delta}_l$, \ $( \mathcal{H} \otimes _{ \tilde{A} } \underline{ \Delta }_l )\circ \underline{ \Delta}_r = (\underline{ \Delta}_r \otimes _{A} \mathcal{H}) \circ \underline{\Delta}_l$;
	\item[$(iii)$] $\mathcal{S}(t_l(a) h t_r(b' ) )= s_r(b') \mathcal{S}(h) s_l(a) $, for every $a\in A$, $b'\in \tilde{A}$ and $h\in \mathcal{H}$;
	\item[$(iv)$] $\mu_{\mathcal{H}} \circ (\mathcal{S} \otimes \mathcal{H})\circ \underline{\Delta}_{\mathit{l}} = s_r \circ \underline{\epsilon}_r $, and \ $\mu_{\mathcal{H}} \circ (\mathcal{H} \otimes \mathcal{S})\circ \underline{\Delta}_r = s_l\circ \underline{\epsilon}_l$.
\end{itemize}
\end{definition}

\begin{definition} \label{strictmorphism} \cite{BS}
A morphism of two Hopf algebroids 
\[(\mathcal{H}, A, \tilde{A}, s_l,t_l, s_r , t_r, \Delta_l, \Delta_r , \epsilon_l , \epsilon_r , \mathcal{S})
\] 
and
\[
(\mathcal{H}', A', \tilde{A}', s'_l,t'_l ,s'_r , t'_r, \Delta'_l, \Delta'_r , \epsilon'_l , \epsilon'_r , \mathcal{S}')
\] 
is a morphism of left bialgebroids $(\Phi , \phi )$  between them. 

    A Hopf algebroid morphism is called strict if $\Phi \circ \mathcal{S} =\mathcal{S}' \circ \Phi$.
\end{definition}

Given a Hopf algebroid $(\mathcal{H}, A, \tilde{A}, s_l,t_l ,s_r , t_r, \Delta_l, \Delta_r , \epsilon_l , \epsilon_r , \mathcal{S})$, a left $\mathcal{H}$-module $M$ naturally inherits the structure of an $A$-bimodule by
\[
a\triangleright m\triangleleft b=s_l (a)t_l (b)\cdot m.
\]
Similarly, a right $\mathcal{H}$-module $N$ inherits a structure of an $\tilde{A}$-bimodule by
\[
\tilde{a}\blacktriangleright n\blacktriangleleft \tilde{b} =n\cdot s_r (\tilde{b})t_r (\tilde{a}).
\]
Moreover, we have the following result.

\begin{theorem}
    Let $(\mathcal{H}, A, \tilde{A}, s_l,t_l , s_r , t_r, \Delta_l, \Delta_r , \epsilon_l , \epsilon_r , \mathcal{S})$ be a Hopf algebroid. Then the category ${}_{\mathcal{H}}\text{Mod}$ of left $\mathcal{H}$-modules has a structure of a closed monoidal structure such that the forgetful functor $U:{}_{\mathcal{H}}\text{Mod} \rightarrow {}_A \text{Mod}_A$ is strictly monoidal.
\end{theorem}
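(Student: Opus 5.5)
The plan is to put the monoidal structure on ${}_{\mathcal{H}}\text{Mod}$ by lifting the monoidal structure of ${}_A\text{Mod}_A$ along the forgetful functor $U$.

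\emph{Underlying bimodules and tensor product.} Every left $\mathcal{H}$-module $M$ is an $A$-bimodule via $a\triangleright m\triangleleft b=s_l(a)t_l(b)\cdot m$. For two left $\mathcal{H}$-modules $M,N$ I take the bimodule tensor product $M\otimes_A N$ and define the action by
\[
h\cdot(m\otimes_A n)=h_{(1)}\cdot m\otimes_A h_{(2)}\cdot n ,
\]
where $\underline{\Delta}_l(h)=h_{(1)}\otimes_A h_{(2)}$. The first thing to check is that this is well defined. On $\mathcal{H}\otimes_A\mathcal{H}$ the formula is not meaningful, but it is meaningful on the Takeuchi product $\mathcal{H}\times_A\mathcal{H}$. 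The map
\[
(\mathcal{H}\otimes_A \mathcal{H})\otimes (M\otimes_A N)\to M\otimes_A N
\]
is well defined on $(\mathcal{H}\times_A\mathcal{H})\otimes_{\Bbbk}(M\otimes_A N)$ exactly because of the defining condition $h_it_l(a)\otimes k_i=h_i\otimes k_is_l(a)$, together with the commutation $s_lt_l=t_ls_l$. Associativity and unitality of the action then follow from multiplicativity of $\underline{\Delta}_l$ into the Takeuchi algebra, and from $\underline{\Delta}_l(1)=1\otimes 1$.

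\emph{Compatibility with $U$ and the unit.} I check that the induced $A$-bimodule structure on $M\otimes_A N$ is the usual one. This uses $\underline{\Delta}_l(s_l(a)t_l(b))=s_l(a)\otimes t_l(b)$, which follows from the $A$-coring axioms. The unit object is $A$ with action
\[
h\cdot a=\underline{\epsilon}_l(h\,s_l(a)).
\]
Axiom (5) makes this an action, and one verifies $s_l(a)t_l(b)\cdot c=acb$. The unit isomorphisms $A\otimes_A M\cong M\cong M\otimes_A A$ of ${}_A\text{Mod}_A$ are $\mathcal{H}$-linear by the counit axioms of the coring, for example $s_l(\underline{\epsilon}_l(h_{(1)}))h_{(2)}=h$. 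The associator of ${}_A\text{Mod}_A$ is $\mathcal{H}$-linear by coassociativity. Hence all coherence data are inherited from ${}_A\text{Mod}_A$, and $U$ is strict monoidal by construction. So far only the left bialgebroid structure has been used.

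\emph{Closedness.} This is the main obstacle, and it is where the antipode $\mathcal{S}$ and the right bialgebroid enter. For the internal hom I would first try the left adjoint route: define $\underline{\mathrm{Hom}}(M,N)$ as ${}_A\mathrm{Hom}(M,N)$, i.e.\ $A$-linear maps for the appropriate side, with action
\[
(h\cdot f)(m)=h^{(1)}\cdot f(\mathcal{S}(h^{(2)})\cdot m)
\]
using $\underline{\Delta}_r$ in the upper Sweedler notation. Well-definedness would use axiom (iii) relating $\mathcal{S}$ to $s_l,t_l,s_r,t_r$, and the action property would use the mixed coassociativity (ii). The evaluation and coevaluation are then shown to be $\mathcal{H}$-linear using the antipode axioms (iv) and the compatibilities (i).

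If the direct formula becomes unwieldy, I would switch to the more conceptual argument. By Schauenburg's characterization, the Hopf algebroid condition is equivalent to bijectivity of the Galois map
\[
\mathcal{H}\otimes_{\tilde A}\mathcal{H}\to\mathcal{H}\otimes_A\mathcal{H}, \qquad h\otimes k\mapsto h_{(1)}\otimes h_{(2)}k .
\]
That bijectivity is precisely what makes $U$ preserve internal homs; for finite-dimensional modules, which is our setting, this yields rigidity and hence closedness.
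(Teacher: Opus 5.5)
The paper gives no proof of this statement; it is quoted as background from the theory of bialgebroids (Schauenburg, B\"ohm). Your proposal therefore has to stand on its own. Your main route does prove the statement, once the side you left open is fixed. The fallback paragraph, however, would not prove it.

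\textbf{Monoidal part.} This is the standard argument and is correct. In particular, $\underline{\Delta}_l(s_l(a)t_l(b))=s_l(a)\otimes_A t_l(b)$ follows from $A$-bilinearity of $\underline{\Delta}_l$ and $\underline{\Delta}_l(1)=1\otimes_A 1$. Axiom (5) is exactly what makes $h\cdot a=\underline{\epsilon}_l(hs_l(a))$ an action.

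\textbf{Closedness via the explicit formula.} The internal hom is a right adjoint, namely of $-\otimes_A M$, so the side is forced. $\underline{\mathrm{Hom}}(M,N)$ must consist of the maps with $f(t_l(a)m)=t_l(a)f(m)$, i.e.\ the right internal hom of ${}_A\mathrm{Mod}_A$, not left $A$-linear maps. The reasons are as follows.
\begin{itemize}
\item $\underline{\Delta}_r(h)$ is balanced over $s_r(\tilde A)$.
\item Axiom (i) gives $s_r(\tilde A)=t_l(A)$.
\item Axiom (iii) gives $\mathcal{S}\circ t_r=s_r$.
\end{itemize}
Hence $h^{(1)}f(\mathcal{S}(h^{(2)})m)$ is well defined on $t_l(A)$-linear maps. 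Some of your axiom attributions should also be corrected.
\begin{itemize}
\item That $h\cdot f$ is again $t_l(A)$-linear follows from the right Takeuchi condition $s_r(b)h^{(1)}\otimes h^{(2)}=h^{(1)}\otimes t_r(b)h^{(2)}$ together with $\mathcal{S}\circ t_r=s_r$.
\item The action property needs only multiplicativity of $\underline{\Delta}_r$ and antimultiplicativity of $\mathcal{S}$, not (ii).
\item The mixed coassociativity (ii), combined with (iv) and the counit axioms, is what makes the unit $l\mapsto l\otimes_A(-)$ and the evaluation $f\otimes_A m\mapsto f(m)$ $\mathcal{H}$-linear.
\end{itemize}
One also computes $(s_l(a)t_l(b)\cdot f)(m)=s_l(a)f(s_l(b)m)$, so $U$ even preserves internal homs. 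That is more than the statement asks for.

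\textbf{The antipode is not needed for bare closedness.} Closedness alone holds for any left bialgebroid. Take $\underline{\mathrm{Hom}}(M,N)=\mathrm{Hom}_{\mathcal{H}}(\mathcal{H}\otimes_A M,N)$, where $\mathcal{H}\otimes_A M$ is the monoidal product of the regular module with $M$, and $\mathcal{H}$ acts on this Hom by right multiplication on the factor $\mathcal{H}$. The antipode is what allows you to replace this by the $t_l(A)$-linear maps.

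\textbf{The fallback paragraph.} You should discard it, for two reasons.
\begin{itemize}
\item A Hopf algebroid does have bijective Galois map, with inverse $h\otimes_A k\mapsto h^{(1)}\otimes_{\tilde A}\mathcal{S}(h^{(2)})k$. This is only one implication, not an equivalence. Also, the Schauenburg criterion quoted in the paper concerns weak Hopf algebras only.
\item More seriously, deducing closedness from rigidity of finite-dimensional modules proves closedness of a different category. The statement concerns all of ${}_{\mathcal{H}}\mathrm{Mod}$ for an arbitrary Hopf algebroid, and nothing in the hypotheses is finite-dimensional. Even when $\mathcal{H}$ is finite-dimensional, ${}_{\mathcal{H}}\mathrm{Mod}$ contains infinite-dimensional modules.
\end{itemize}
If you want to invoke Schauenburg, his theorem on $\times_A$-Hopf algebras already gives the closed structure on all modules, with internal hom on $t_l(A)$-linear maps, with no rigidity detour.
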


\begin{example}
    Let $A$ be an algebra. One can endow the algebra  $A\otimes A^{op}$ with a structure of Hopf algebroid $(A\otimes A^{op} , A, A^{op}, s_l, t_l, s_r, t_r, \Delta_l , \Delta_r , \epsilon_l, \epsilon_r , \mathcal{S})$ in which 
    \begin{itemize}
        \item $s_l:A \longrightarrow A\otimes A^{op}$ is given by $s_l(b)=b\otimes 1$;
        \item $t_l: A \longrightarrow A\otimes A^{op}$ is given by $t_l(b)=1\otimes b$;
        \item $s_r=t_l$ and $t_r=s_l$;
        \item $\Delta_l:A\otimes A^{op} \longrightarrow (A\otimes A^{op}) \otimes _ A (A\otimes A^{op})$ is given by $\Delta_l (a\otimes b)=(a\otimes 1)\otimes_A (1\otimes b)$;
        \item $\Delta_r:A\otimes A^{op} \longrightarrow (A\otimes A^{op}) \otimes _ {A^{op}} (A\otimes A^{op})$ is given by $\Delta_l (a\otimes b)=(a\otimes 1)\otimes_{A^{op}} (1\otimes b)$;
        \item $\epsilon_l:A\otimes A^{op}\rightarrow A$ is given by $\epsilon_l(a\otimes b)=ab$;
        \item $\epsilon_r:A\otimes A^{op}\rightarrow  A^{op}$ is given by $\epsilon_r(a\otimes b)=ab$;
        \item $\mathcal{S}: A\otimes A^{op} \longrightarrow A\otimes A^{op}$ is defined by $\mathcal{S}(a\otimes b)= b\otimes a$.
    \end{itemize} 
\end{example}

\begin{example}
    Every weak Hopf algebra $H$ is a Hopf algebroid. Indeed, $H$ has a left bialgebroid structure over the subalgebra $H_t =\epsilon_t (H)$. The left bialgebroid $(H,H_t , s_l, t_l, \Delta_l , \epsilon_l)$ is defined by:
    \begin{itemize}
        \item $s_l =\imath :H_t \rightarrow H$ is the canonical inclusion.
        \item $t_l =\epsilon_s'|_{H_t}:H_t \rightarrow H$, as defined in Definition \ref{subalgebra-Ht-Hs}.
        \item $\Delta_l :\pi_t \circ \Delta :H\rightarrow H\otimes_{H_t} H$, where $\pi_t :H\otimes H \rightarrow H\otimes_{H_t}H$ is the canonical projection.
        \item $\epsilon_l =\epsilon_t$, as defined in Definition \ref{subalgebra-Ht-Hs}.
    \end{itemize}
    Similarly, there is a right bialgebroid structure $(H,H_s , s_r, t_r, \Delta_r , \epsilon_r)$, where $H_s=\epsilon_s (H)$, given by
    \begin{itemize}
        \item $s_r =\imath :H_s \rightarrow H$ is the canonical inclusion.
        \item $t_r =\epsilon_t'|_{H_s}:H_s \rightarrow H$, as defined in Definition \ref{subalgebra-Ht-Hs}.
        \item $\Delta_r :\pi_s \circ \Delta :H\rightarrow H\otimes_{H_s} H$, where $\pi_s :H\otimes H \rightarrow H\otimes_{H_s}H$ is the canonical projection.
        \item $\epsilon_l =\epsilon_s$, as defined in Definition \ref{subalgebra-Ht-Hs}.
    \end{itemize}
    Finally, the antipode $S:H\rightarrow H$, from Definition \ref{defhopffraca} is the antipode of $H$ as a Hopf algebroid.
\end{example}

\begin{example}
    In particular, for a groupoid $\Gamma$ with finitely many objects, the groupoid algebra $\Bbbk\Gamma$ presented in Example \ref{exemplo-kgama-bialgebrafraca} is a weak Hopf algebra, therefore it is also a Hopf algebroid over $A=\Bbbk \Gamma^{(0)}=(\Bbbk\Gamma)_t=(\Bbbk\Gamma)_s$, using again the identification $\Gamma^{(0)}=u(\Gamma^{(0)})\subseteq \Gamma$ in the groupoid. In this case, we have, for any $x\in \Gamma^{(0)}$,
    \[
    \epsilon_s (x)=\epsilon_t (x) =\epsilon_s' (x)=\epsilon_t' (x)=x,
    \]
    that is, the left and right source and target maps coincide with the canonical inclusion $\imath: A \rightarrow \Bbbk \Gamma$. Moreover
    \begin{eqnarray*}
    & \, & \Delta_l (\gamma) =\Delta_r (\gamma) =\gamma \otimes_A \gamma ;\\
    & \, & \epsilon_l (\gamma )=t(\gamma) ;\\
    &\, & \epsilon_r (\gamma)=s(\gamma) ; \\
    &\, & \mathcal{S}(\gamma)= \gamma^{-1} .
    \end{eqnarray*}
    Note that, for any $\gamma\otimes_{A} \delta \neq 0$ in $\Bbbk\Gamma \otimes_{A} \Bbbk\Gamma$ and $x\in \Gamma^{(0)}$
    \[
     \gamma\triangleleft x \otimes_{A} \delta =  \gamma \otimes_A x\triangleright \delta ,
    \]
    that is
    \[
    x\gamma \otimes_A \delta=\gamma \otimes_A x\delta .
    \]
     As $\gamma \otimes_A \delta \neq 0$, this forces the target of $\gamma$ being equal to the target of $\delta$ in the groupoid. In particular
     for any $\gamma \in \Gamma$, that
    \begin{equation}\label{coprodutonogrupoide}
    \Delta_l(\gamma)= \Delta_r (\gamma)= \sum_{x\in \Gamma^{(0)}} x\gamma \otimes_A x\gamma .
    \end{equation}
    \end{example}

\begin{example} \cite{ABVparreps}
    For a group $G$ the partial group algebra $\Bbbk_{par}G$, from Definition \ref{Partialgroupalgebra}, is also a Hopf algebroid over the commutative base algebra $A_{par}(\Bbbk G)=\langle \varepsilon_g =[g][g^{-1}] \; | \; g\in G \rangle$. This comes from the construction of the ``partial Hopf algebra'' $H_{par}$, in \cite{ABVparreps}, which factorizes partial representations of a Hopf algebra $H$ by morphisms of algebras. In our particular case, for the group algebra, $\Bbbk G$, we have $\Bbbk_{par}G=(\Bbbk G)_{par}$. The structure of Hopf algebroid of $\Bbbk_{par}G$ is given by
    \begin{eqnarray*}
        & \, & s_l (\varepsilon_g) =  s_r (\varepsilon_g) =t_l (\varepsilon_g)=t_r(\varepsilon_g) =[g][g^{-1}] ;\\
        & \, & \Delta_l ([g_1]\cdots[g_n])  =  \Delta_r ([g_1]\cdots [g_n])=[g_1]\cdots [g_n]\otimes_{A_{par}(\Bbbk G)} [g_1]\cdots [g_n] ;\\
        & \, & \epsilon_l ([g_1]\cdots [g_n])  =  \epsilon_r ([g_1]\cdots [g_n])=\varepsilon_{g_1} \varepsilon_{g_1 g_2} \cdots \varepsilon_{g_1 \ldots g_n} ;\\
        & \, & \mathcal{S}([g_1]\cdots [g_n])  =  [g_n^{-1}]\cdots [g_1^{-1}].
    \end{eqnarray*}
\end{example}

\begin{theorem}\label{kpar-e-kgamma-SAO-ISOMORFAS}
    The algebra isomorphism $\lambda :\Bbbk_{par}G \rightarrow \Bbbk \Gamma (G)$, given by
    \[
    \lambda ([g])=\sum_{X\in \mathcal{P}_{e,g^{-1}}} (g,X),
    \]
    induces an isomorphism of Hopf algebroids between $\Bbbk_{par} G$ and $\Bbbk \Gamma (G)$.
\end{theorem}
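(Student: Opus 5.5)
The plan is to exhibit a pair $(\lambda,\phi)$ as in Definition \ref{def-morfismo-HOPF-ALGEBROIDE}, with $\Phi=\lambda$ and $\phi=\lambda|_{A_{par}(\Bbbk G)}$. Then I will check the left bialgebroid axioms, the analogous right ones, and compatibility with the antipodes, so that the morphism is strict.

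\textbf{Base algebras.} First I show that $\lambda$ maps $A_{par}(\Bbbk G)$ isomorphically onto $A=\Bbbk\Gamma(G)^{(0)}$. From Theorem \ref{isomorfismoprincipal}, $\lambda^{-1}(e,X)=P_X$, so $\lambda(P_X)=(e,X)$. Since $\{P_X\}_{X\in\mathcal{P}_e(G)}$ is a basis of $A_{par}(\Bbbk G)$ and $\{(e,X)\}$ is a basis of $A$, $\phi$ is an algebra isomorphism. As a consistency check, $\lambda(\varepsilon_g)=\lambda([g])\lambda([g^{-1}])=\sum_{X\ni g}(e,X)$, which agrees with $\varepsilon_g=\sum_{X\in\mathcal{P}_{e,g}}P_X$. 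In both Hopf algebroids all four source and target maps are the inclusion of the base. The conditions $s'\circ\phi=\lambda\circ s$ and $t'\circ\phi=\lambda\circ t$ therefore hold trivially, for left and right structures alike.

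\textbf{Counits and comultiplications.} For the counit it suffices to check the basis element $[g]P_X$, with $g^{-1}\in X$, which $\lambda$ sends to $(g,X)$. Using $\epsilon_l([g_1]\cdots[g_n])=\varepsilon_{g_1}\cdots\varepsilon_{g_1\cdots g_n}$ and writing $P_X$ as a combination of products of $\varepsilon$'s, I get that $\epsilon_l$ is left $A$-linear in the appropriate sense. Hence $\epsilon_l([g]P_X)=\epsilon_l([g]P_X[g^{-1}][g])$, which reduces to $\varepsilon_g\,[g]P_X[g^{-1}]=P_{gX}$ by Proposition \ref{px}(2). Its image is $(e,gX)=t(g,X)=\epsilon_l(g,X)$, as required. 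The right counit is handled similarly and gives $P_X\mapsto (e,X)=s(g,X)$.

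For the comultiplication, the relation $\Delta([g]P_X)=([g]\otimes[g])\Delta(P_X)$ and multiplicativity reduce the check to computing $\Delta_l(P_X)$. Since $\Delta_l(\varepsilon_h)=\varepsilon_h\otimes\varepsilon_h$ and $\Delta_l(1)=1\otimes 1$, expanding the product defining $P_X$ and using the $A$-balancing gives $\Delta_l(P_X)=P_X\otimes_{A_{par}} P_X$. Consequently $\Delta_l([g]P_X)=[g]P_X\otimes[g]P_X$, and applying $\lambda\otimes\lambda$ yields $(g,X)\otimes_A(g,X)=\Delta_l(g,X)$, in agreement with (\ref{coprodutonogrupoide}). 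One must also check that $\lambda\otimes\lambda$ descends to $\otimes_{A_{par}}\to\otimes_A$; this is immediate because $\phi$ is the restriction of $\lambda$.

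\textbf{Antipode.} Here $\mathcal{S}([g]P_X)=P_X[g^{-1}]=[g^{-1}]P_{gX}$, again by Proposition \ref{px}(2), and $\lambda$ sends this to $(g^{-1},gX)=(g,X)^{-1}$. So the morphism is strict. Since $\lambda$ is bijective, its inverse is automatically a morphism, which proves the statement.

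\textbf{Main obstacle.} The delicate step is the comultiplication computation $\Delta_l(P_X)=P_X\otimes P_X$ inside $\otimes_{A_{par}}$, together with the counit on products. Cross terms such as $\varepsilon_r\otimes(1-\varepsilon_r)$ must vanish. They do, because one can move $\varepsilon_r$ across the balanced tensor product and use idempotency. I would verify this carefully first, by induction on the number of factors of $P_X$.
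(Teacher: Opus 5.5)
Your argument is correct and shares the paper's skeleton: the pair $(\lambda,\lambda|_{A_{par}(\Bbbk G)})$, trivial source/target checks, then counit, comultiplication and antipode. The difference is that your computations run in the opposite direction. The paper pushes words forward. It expands $\lambda([g_1]\cdots[g_n])$ as a sum of arrows $(g_1\cdots g_n,Y)$ and checks $\Delta_l$ only on the generators $[g]$; there the cross terms $(g,X)\otimes_{A_\Gamma}(g,Y)$ with $X\neq Y$ die after inserting $\sum_Z(e,Z)$. It then compares counits and antipodes on words. This buys an explicit combinatorial picture, for instance $\lambda(\varepsilon_{g_1}\cdots\varepsilon_{g_1\cdots g_n})=\sum(e,X)$ over all $X\ni e,g_1,\ldots,g_1\cdots g_n$. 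You instead pull the groupoid basis back to $\lambda^{-1}(g,X)=[g]P_X$ and work inside $\Bbbk_{par}G$ with Proposition \ref{px}(2). Every check then becomes a one-line identity on an honest linear basis, and the dictionary $\epsilon_l\leftrightarrow t$, $\mathcal{S}\leftrightarrow(\,\cdot\,)^{-1}$ can be read off directly.

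Your ``main obstacle'' needs no induction. Since $\Delta_l$ is left $A_{par}(\Bbbk G)$-linear and $\Delta_l(1)=1\otimes 1$, one balancing step gives $\Delta_l(P_X)=P_X\otimes 1=P_X^2\otimes 1=P_X\otimes P_X$.

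Finally, the right-structure check is more than the statement requires. Definition \ref{strictmorphism} only asks for a morphism of left bialgebroids, plus $\lambda\circ\mathcal{S}=\mathcal{S}\circ\lambda$ for strictness. Your value $\epsilon_r([g]P_X)=P_X$ is correct for the genuine right counit $\epsilon_r([g])=\varepsilon_{g^{-1}}$, extended by right $A_{par}(\Bbbk G)$-linearity. It is not what the formula $\epsilon_r=\epsilon_l$ displayed in the paper's example of $\Bbbk_{par}G$ gives: that formula yields $P_{gX}$, which matches $t(g,X)$ rather than $s(g,X)$. So ``handled similarly'' should state which right counit you are using.
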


\begin{proof}
First recall that the expression of the inverse map $\lambda^{-1}:\Bbbk \Gamma \rightarrow \Bbbk_{par}G$ is
\[
\lambda^{-1}\left( (g,X) \right) =[g]P_X.
\]
In particular, $\lambda$ gives an algebra isomorphism between $A_{par}(\Bbbk G)$ and the subalgebra 
\[
A_{\Gamma}=\Bbbk \Gamma (G)^{(0)} =\langle (e,X) \; | \; X\in \mathcal{P}_e (G) \rangle \subseteq \Bbbk\Gamma (G).
\]
Also, recall that $A_{par}(\Bbbk G)\subseteq \Bbbk_{par}G$, and then $s_l (\varepsilon_g)=t_l(\varepsilon_g)=s_r(\varepsilon_g)=t_r(\varepsilon(g))=[g][g^{-1}]$, that is, the left and right source and target maps are just the inclusion map. 
Checking the values of $\lambda$ on the basis elements of $A_{par}(\Bbbk G)$, $P_X=s_l(P_X)=s_r(P_X)=t_l(P_X)=t_r(P_X) \in \Bbbk_{par}G$, we have
\[
\lambda (s_l(P_X))=\lambda (P_X)=\lambda (\lambda^{-1}(e,X))=(e,X) =s_l ((e,X)).
\]
The same is valid for $s_r$, $t_l$ and $t_r$.

For the comultiplication, on the one hand we have
\[
    \underline{\Delta}_l (\lambda ([g])  =  \underline{\Delta}_l \left( \sum_{X\in \mathcal{P}_{e,g^{-1}}(G)} (g,X) \right)
    =  \sum_{X\in \mathcal{P}_{e,g^{-1}}(G)} (g,X) \otimes_{A_\Gamma} (g,X) .
\]

Now, consider the map
\[
\begin{array}{rccc} \Lambda :& \Bbbk_{par}G \times \Bbbk_{par}G & \rightarrow &  \Bbbk \Gamma (G) \otimes_{A_\Gamma} \Bbbk \Gamma (G) \\
\, & (\xi , \zeta ) & \mapsto & \lambda (\xi ) \otimes_{A_\Gamma} \lambda (\zeta) \end{array} .
\]
It is easy to see that this map $\Lambda$ is balanced over $A_{par}(\Bbbk G)$, so there is a unique well defined linear map 
\[
(\lambda \otimes_{A_{par}(\Bbbk G)}\lambda) :\Bbbk_{par}G \otimes_{A_{par}(\Bbbk G)} \Bbbk_{par}G \rightarrow \Gamma (G) \otimes_{A_\Gamma} \Bbbk \Gamma (G)
\]
factoring $\Lambda$. Then we have,
\begin{align*}
    & (\lambda \otimes_{A_{par}(\Bbbk G)}\lambda)\circ \Delta_l ([g]) =\lambda ([g]) \otimes_{A_\Gamma}\lambda ([g]) \\
    & =  \sum_{X,Y\in \mathcal{P}_{e,g^{-1}}(G)} (g,X) \otimes_{A_\Gamma} (g,Y) =\\
    & = \sum_{Z\in \mathcal{P}_e(G)} \ \sum_{X,Y\in \mathcal{P}_{e,g^{-1}}(G)} (e,Z) (g,X) \otimes (e,Z)(g,Y) \\
    & =  \sum_{X\in \mathcal{P}_{e,g^{-1}}(G)} (g,X) \otimes_{A_\Gamma} (g,X) .
\end{align*}

For the counit, observe that
\begin{align*}
& \lambda|_{A_{par}(\Bbbk G)} (\epsilon_l ([g_1]\cdots[g_n]))  =  \lambda|_{A_{par}(\Bbbk G)} (\varepsilon_{g_1}\cdots \varepsilon_{g_1\cdots g_n}) = \\
& =  \left( \sum_{X_1\in \mathcal{P}_{e,g_1}(G)} (e,X_1)\right)  \cdots \left( \sum_{X_n\in \mathcal{P}_{e,g_1\cdots g_n}(G)} (e,X_n)\right) \\
& =  \sum_{X_1 \in \mathcal{P}_{e,g_1}(G) }  \cdots \sum_{X_n\in \mathcal{P}_{e,g_1\cdots g_n}(G)} (e,X_n) 
[\![ X_1 =X_2 ]\!] \cdots [\![ X_{n-1} =X_n ]\!] \\
& =  \sum_{X\in \mathcal{P}_{e,g_1, g_1 g_2 \ldots, g_1\cdots g_n}(G)} (e,X)
\end{align*}
On the other hand
\begin{align*}
&  \underline{\epsilon}_l (\lambda([g_1]\cdots [g_n]) =  \underline{\epsilon}_l \left(\left( \sum_{Y_1 \in \mathcal{P}_{e, g_1^{-1}}(G)} (g_1 ,Y_1) \right) \cdots \left( \sum_{Y_n \in \mathcal{P}_{e, g_n^{-1}}(G)} (g_n ,Y_n) \right) \right)\\
& =  \underline{\epsilon}_l\left( \sum_{Y_1 \in \mathcal{P}_{e, g_1^{-1}}(G)}  \cdots \sum_{Y_n \in \mathcal{P}_{e, g_n^{-1}}(G)} (g_1 \cdots g_n ,Y_n) [\![ Y_1 =g_2 Y_2 ]\!] \cdots [\![ Y_{n-1} =g_n Y_n ]\!] \right)\\
& =  \underline{\epsilon}_l\left(  \sum_{Y \in \mathcal{P}_{e, g_n^{-1}, g_n^{-1}g_{n-1}^{-1}, \ldots , g_n^{-1}\cdots g_1^{-1}}(G)}  (g_1 \cdots g_n ,Y) \right) \\
& =  \sum_{Y \in \mathcal{P}_{e, g_n^{-1}, g_n^{-1}g_{n-1}^{-1}, \ldots , g_n^{-1}\cdots g_1^{-1}}(G)}  (e ,g_1 \cdots g_n Y)  =  \sum_{X\in \mathcal{P}_{e,g_1, g_1 g_2 \ldots , g_1\cdots g_n}(G)} (e,X)
\end{align*}
Last, but not least, the antipode is also preserved by $\lambda$. Indeed

\begin{eqnarray*}
& \, &\lambda (\mathcal{S} ([g_1]\cdots [g_n])  = \lambda ([g_n^{-1}] \cdots [g_1^{-1}]) \\
& = & \left( \sum_{Y_n \in \mathcal{P}_{e, g_n}(G)} (g_n^{-1} , Y_n ) \right) \cdots \left( \sum_{Y_1 \in \mathcal{P}_{e, g_1}(G)} (g_1^{-1} , Y_1 ) \right) \\
& = & \left( \sum_{X_n \in \mathcal{P}_{e, g_n^{-1}}(G)} (g_n^{-1} , g_n X_n ) \right) \cdots \left( \sum_{X_1 \in \mathcal{P}_{e, g_1^{-1}}(G)} (g_1^{-1} , g_1 X_1 ) \right) \\
& = & \left( \sum_{X_n \in \mathcal{P}_{e, g_n^{-1}}(G)} \mathcal{S} (g_n , X_n ) \right) \cdots \left( \sum_{X_1 \in \mathcal{P}_{e, g_1^{-1}}(G)} \mathcal{S}(g_1 , X_1 ) \right) \\
& = & \mathcal{S} \left( \left( \sum_{X_1 \in \mathcal{P}_{e, g_1^{-1}}(G)} (g_1 , X_1 ) \right) \cdots \left( \sum_{X_n \in \mathcal{P}_{e, g_n^{-1}}(G)} (g_n , X_n ) \right) \right)\\
& = & \mathcal{S} \left( \lambda ([g_1]\cdots [g_n]) \right) 
\end{eqnarray*}

Therefore, the map $\lambda$ is a strict isomorphism of Hopf algebroids between $\Bbbk_{par}G$ and $\Bbbk \Gamma (G)$, according to Definition \ref{strictmorphism}.
\end{proof}

\section{The monoidal category of partial representations of a finite group}

As we have seen before, the algebra $\Bbbk_{par}G$ can be expressed as a direct sum \cite{DEP, DP}
\[
\Bbbk_{par}G =\bigoplus_{\begin{array}{c} H\leq G \\
1\leq m\leq [G:H]\end{array}} c_m (H)M_m (\Bbbk H),
\]
with $c_m (H)\in \mathbb{Z}_+$ being the multiplicity of the corresponding summand \cite{DP}. As $\Bbbk$ is an algebraically closed field of characteristic $0$, each group algebra of each subgroup is also decomposed as a product of matrix algebras. Therefore, as a consequence of the Wedderburn-Artin theorem, one can conclude that $\Bbbk_{par} G$ is a semisimple algebra. Our aim in this section is to determine the simple modules over $\Bbbk_{par} G$ and describe the Grothendieck ring of the category of $\Bbbk_{par}G$-modules. It is worth mentioning that the finite-dimensional simple modules over $\Bbbk_{par}G$ have already been described in \cite{DN}, using the isomorphism
\(
\Bbbk_{par}G \simeq \Bbbk\Gamma(G).
\)
There, the authors describe the simple $\Bbbk\Gamma(G)$-modules in terms of tensor products over $\Bbbk H$, where $H$ ranges over the subgroups of $G$, with the choice of $H$ depending on the simple module under consideration. { In the present section, however, we use the idempotents $\Gamma_X$ associated with each $X\in\mathcal{P}_e(G)$, which provide a decomposition of $\Bbbk_{par}G$ into the corresponding components.
}

First, recall that the set
\[
\mathcal{B}=\{P_X \in A_{par}(\Bbbk G) \mid X \in \mathcal{P}_e(G)\}
\]
forms a linear basis of the algebra $A_{par}(\Bbbk G)$. For each $X \in \mathcal{P}_e(G)$, define the subspace
\[
M_X = \mathrm{span}_{\Bbbk}\{P_{gX} \mid g^{-1} \in X\}.
\]

Let
\[
G_X = \{g \in G \mid gX = X\}
\]
denote the isotropy subgroup of the subset $X$. Choose representatives
$g_1^{-1}, \ldots, g_k^{-1}$ of the right cosets of $G_X$ in $G$, so that
\[
G = \dot{\bigcup}_{i=1}^{k} G_X g_i^{-1}.
\]
Without loss of generality, we may assume that \(X = \dot{\bigcup}_{i=1}^{m} G_X g_i^{-1},
\) for some $m \leq k$. Now, for any $h \in G$, there exist $l \in \{1,\ldots,k\}$ and $s \in G_X$
such that $h = g_l s$. Consequently, \( hX = g_l s X = g_l X.\) Moreover, one can verify that $h^{-1} \in X$ if and only if $l \leq m$. It follows that \( \{P_{g_i X} \mid 1 \leq i \leq m\} \)
forms a basis of the vector space $M_X$. Now consider $g \in G$ and an element $P_{g_i X} \in M_X$, with
$i \in \{1,\ldots,m\}$. Then there exist $j \in \{1,\ldots,k\}$ and
$t \in G_X$ such that $g g_i = g_j t$. Hence,
\[
g g_i X = g_j t X = g_j X,
\]
and notice that the following are equivalent:
\[
 g_j^{-1} \in X, \ \ \ \ j \in \{1, ..., m\}, \ \ \textrm{ and } \ \ g^{-1} \in g_iX.
\]

Finally, consider an irreducible representation $\pi_\alpha :G_X \rightarrow GL( \mathbb{V}^{(\alpha)})$ of the isotropy subgroup of $X$ and define
\[
M_{(X,\alpha)}=M_X \otimes \mathbb{V}^{(\alpha)}.
\]

\begin{proposition}\label{proposition.Mxalpha.module.structure}
The space $M_{(X,\alpha)}$ has the structure of $\Bbbk_{par}G$-module given by
\[
[g] \triangleright \left(P_{g_iX} \otimes v\right)= \left\{ \begin{array}{lc} P_{g_jX} \otimes \pi_\alpha (t)v, & \text{if } g_j^{-1}\in X, \\
0, & \text{otherwise,}\end{array}\right.
\]
in which $gg_i=g_jt$, for $i \in \{1,...,m\}$, $j \in \{1,...,k\}$ and $t\in G_X$.
\end{proposition}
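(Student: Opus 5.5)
The plan is to define the action $\pi:G\to \text{End}_\Bbbk(M_{(X,\alpha)})$ by the displayed formula and verify (PR1)--(PR3). By the universal property of $\kpar G$ (the Theorem of \cite{DEP} and its Corollary), this produces the $\kpar G$-module structure.

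First, I check that the formula is well defined. Given $g$ and $i$, the decomposition $gg_i=g_jt$ with $t\in G_X$ is unique, because the $g_j$ are fixed representatives of the left cosets $g_jG_X$ (the inverses of the chosen right-coset representatives). So $j$ and $t$ depend only on $(g,i)$. (PR1) is immediate: $eg_i=g_ie$ gives $j=i$ and $t=e$, with $g_i^{-1}\in X$ since $i\le m$.

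To avoid grinding through cases for (PR2) and (PR3), I would encode the action as $\pi(g)=\sum_{i\le m,\;j\le m}\bool{g_j^{-1}gg_i\in G_X}\,E_{ji}\otimes\pi_\alpha(g_j^{-1}gg_i)$, acting on basis vectors $P_{g_iX}\otimes v$. This is exactly the matrix $\pi(g)$ from the proof of Theorem \ref{theorem.kparGGamma.iso.MnKG}, with $n=m$ and the same coset representatives of $X$, composed with $\pi_\alpha$ applied entrywise. Equivalently, $\pi=(\mathrm{id}_{M_m}\otimes\pi_\alpha)\circ\varphi_X\circ[\,\cdot\,]$.

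Since $\varphi_X$ is an algebra map, $\varphi_X\circ[\,\cdot\,]$ is a partial representation into $M_m(\Bbbk G_X)$. The map $M_m(\Bbbk G_X)\to M_m(\mathrm{End}(\mathbb V^{(\alpha)}))\cong \mathrm{End}(M_X\otimes\mathbb V^{(\alpha)})$ induced by $\pi_\alpha$ is an algebra morphism. Composing a partial representation with an algebra morphism preserves (PR1)--(PR3), so $\pi$ is a partial representation. Concretely, $M_{(X,\alpha)}$ is then the module $\kpar G\,\Gamma_X\cong M_m(\Bbbk G_X)$ acting on $\Bbbk^m\otimes\mathbb V^{(\alpha)}$.

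The main obstacle is checking that this matrix formula really coincides with the stated action. The condition $g_j^{-1}gg_i\in G_X$ for some $j\le m$ means $gg_i\in g_jG_X$, so $t=g_j^{-1}gg_i$ and the entry is $\pi_\alpha(t)$. If the $j$ with $gg_i\in g_jG_X$ has $j>m$, then $g_j^{-1}\notin X$ and no entry survives, which matches the ``otherwise $0$'' clause. The delicate point is consistency of the coset conventions: the proof of Theorem \ref{theorem.kparGGamma.iso.MnKG} used $X=\dot\bigcup G_Xg_i^{-1}$ with exactly the same $g_i$, so the indices match. I would state this identification explicitly rather than redo the partial-representation computations.
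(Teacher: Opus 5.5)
Your proof is correct, but it takes a different route from the paper.

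\textbf{The paper's route.} The paper proves the proposition by direct computation. It defines $\varphi(g)$ on the basis $P_{g_lX}\otimes v$ and checks (PR2) by chasing the coset decompositions $h^{-1}g_l=g_rs$, $hg_r=g_ls^{-1}$ and $gg_l=g_ty$ through both sides. It declares (PR3) analogous and then invokes the universal property.

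\textbf{Your route.} You instead observe that the stated action is the pullback of the $M_m(\Bbbk G_X)$-module $\Bbbk^m\otimes\mathbb{V}^{(\alpha)}$ along $\varphi_X$, that is, $(\mathrm{id}\otimes\pi_\alpha)\circ\varphi_X\circ[\,\cdot\,]$. Hence (PR1)--(PR3) are inherited from the partial representation already verified in the proof of Theorem \ref{theorem.kparGGamma.iso.MnKG}. Strictly, you do not even need the universal property again: restricting scalars along the algebra map $(\mathrm{id}\otimes\pi_\alpha)\circ\varphi_X$ directly gives the module.

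\textbf{Your matching step is right.} The index $j$ with $gg_i\in g_jG_X$ is unique. The single surviving entry is $\pi_\alpha(g_j^{-1}gg_i)$, and everything vanishes exactly when that $j>m$, i.e.\ when $g_j^{-1}\notin X$. The index conventions also agree, since the first $m$ representatives satisfy $X=\dot\bigcup_{i\le m}G_Xg_i^{-1}$, so $n=m$.

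\textbf{What each approach buys.} Your route needs no new coset bookkeeping, and it gives extra information at once. The action factors through $\kpar G\,\Gamma_X\cong M_m(\Bbbk G_X)$, and $M_{(X,\alpha)}$ is the Morita image of $\mathbb{V}^{(\alpha)}$. The paper only establishes this identification later, in the second half of the proof of Theorem \ref{simplemodules}. With your route, simplicity of $M_{(X,\alpha)}$ and the classification of simples follow immediately from the Wedderburn decomposition. The cost is that your argument is only as complete as that earlier verification, where the paper itself leaves one identity as ``a similar computation''. The paper's direct check, by contrast, works entirely with the basis $P_{g_iX}\otimes v$ and does not depend on the matrix model.
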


\begin{proof}
    Define the map $\varphi: G\rightarrow \text{End}_\Bbbk (M_X \otimes \mathbb{V}^{(\alpha)})$ given by
    \[
    \varphi (g)(P_{g_iX}\otimes v)=P_{g_jX}\otimes \pi_\alpha (t)v,
    \]
    for $gg_i=g_jt$, with $i \in \{1,...,m\}$, $j\in \{1,...,k\}$ and $t\in G_X$. Recall that that if $g_j^{-1} \notin X$, then $P_{g_jX} = 0$. Now, for $g,h,k\in G$, $l \in \{1, ..., m\}$ and $v\in \mathbb{V}^{(\alpha)}$ we have
    \begin{eqnarray*}
    & \, & \varphi (g)\circ \varphi (h) \circ \varphi (h^{-1})(P_{g_lX}\otimes v) =\\
    & \stackrel{h^{-1}g_l=g_{r}s}{=} & \varphi (g)\circ \varphi (h) (P_{g_rX}\otimes \pi_\alpha (s)v)\; [\![ g_r^{-1}\in X]\!] \\
    & \stackrel{hg_r=g_ls^{-1}}{=}& \varphi (g) (P_{g_lX}\otimes \pi_\alpha (s^{-1})\pi_\alpha (s)v)\; [\![ g_{r}^{-1}\in X]\!] \\
    & \stackrel{gg_{l}=g_ty}{=}& P_{g_tX}\otimes \pi_\alpha (y)v\; [\![ g_r^{-1}\in X]\!] \; [\![ g_t^{-1}\in X]\!].
    \end{eqnarray*}
    On the other hand,
    \begin{eqnarray*}
        & \, & \varphi (gh) \circ \varphi (h^{-1})(P_{g_lX}\otimes v) =\\
        & \stackrel{h^{-1}g_l=g_rs}{=} & \varphi (gh) (P_{g_rX}\otimes \pi_\alpha (s)v)\; [\![ g_r^{-1}\in X]\!] \\
        & \stackrel{(\ast )}{=} & P_{g_tX}\otimes \pi_\alpha (ys^{-1}) \pi_\alpha (s)v\; [\![ g_r^{-1}\in X]\!] \; [\![ g_t^{-1}\in X]\!] \\
        & = & P_{g_tX}\otimes \pi_\alpha (y)v\; [\![ g_r^{-1}\in X]\!] \; [\![ g_t^{-1}\in X]\!],
    \end{eqnarray*}
    in which $(\ast )$ indicates the equalities $ghg_r=gg_ls^{-1}=g_tys^{-1}$. Therefore
    \[
    \varphi (g) \circ \varphi (h) \circ \varphi (h^{-1})=\varphi (gh) \circ \varphi (h^{-1}).
    \]
    Analogously, one can verify that
    \[
    \varphi (g^{-1})\circ \varphi (g) \circ \varphi (h) =\varphi (g^{-1})\circ \varphi (gh).
    \]
    Therefore, $\varphi: G\rightarrow \text{End}_\Bbbk (M_X \otimes \mathbb{V}^{(\alpha)})$ is a partial representation of $G$, which factorizes through an algebra morphism $\overline{\varphi}:\Bbbk_{par} G \rightarrow \text{End}_\Bbbk (M_X \otimes \mathbb{V}^{(\alpha)})$, making $M_{(X,\alpha)}$ into a $\Bbbk_{par}G$-module.
\end{proof}

\begin{remark}
    One may choose another set of representatives $h_1^{-1},\ldots,h_k^{-1}$ for the right cosets of $G_X$ in $G$. Without loss of generality, we may assume that
\[
G_X h_j^{-1} = G_X g_j^{-1}, \qquad \text{for all } 1 \leq j \leq k.
\]
In this case, the set $\{P_{h_iX} \mid 1 \leq i \leq m\}$ also forms a basis of $M_X$. Consequently, the module $M_{(X,\alpha)}$ admits the following $\Bbbk_{par}G$-module structure:
\[
[g]\blacktriangleright (P_{h_iX}\otimes v) =
\begin{cases}
P_{h_jX}\otimes \pi_\alpha(t)v, & \text{if } h_j^{-1}\in X,\\
0, & \text{otherwise,}
\end{cases}
\]
where $gh_i = h_j t$, with $i \in \{1,\ldots,m\}$, $j \in \{1,\ldots,k\}$, and $t \in G_X$.

Observe that, since we assume $G_X h_j^{-1} = G_X g_j^{-1}$ for all $1 \leq j \leq k$, it follows that $h_j^{-1}g_j \in G_X$. Hence we may consider the linear map
\[
f:M_{(X,\alpha)} \longrightarrow M_{(X,\alpha)},
\]
defined by
\[
f(P_{g_iX}\otimes v) = P_{h_iX}\otimes \pi_\alpha(h_i^{-1}g_i)v,
\ \ \text{for } i \in \{1,\ldots,m\}.
\]

One can verify that $f$ is an isomorphism of $\kpar G$-modules, where the $\kpar G$-module structure on the domain is the one given in Proposition~\ref{proposition.Mxalpha.module.structure}. More precisely, for all $g \in G$ one has
\[
f\bigl([g]\triangleright (P_{g_iX}\otimes v)\bigr)
=
[g]\blacktriangleright f(P_{g_iX}\otimes v).
\]
\end{remark}

\begin{theorem}\label{simplemodules}
    For each subset $X\in \mathcal{P}_e(G)$ and each irreducible representation $\pi_\alpha :G_X \rightarrow GL( \mathbb{V}^{(\alpha)})$, the $\Bbbk_{par}G$-modules $M_{(X,\alpha)}$ are simple. Moreover, any simple $\Bbbk_{par}G$-module is of this form.
\end{theorem}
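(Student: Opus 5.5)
The plan is to transport everything through the block decomposition already established. Since $\kpar G \simeq \bigoplus_{X\in T}\kpar G\,\Gamma_X$ with central idempotents $\Gamma_X$ (Proposition \ref{GammaX}), every simple $\kpar G$-module is annihilated by all but one $\Gamma_X$. It is therefore a simple module over a single block $\kpar G\,\Gamma_X \simeq M_n(\Bbbk G_X)$ (Theorem \ref{theorem.kparGGamma.iso.MnKG}). By Morita theory the simple $M_n(\Bbbk G_X)$-modules are exactly the column modules $(\Bbbk G_X)^n\otimes_{\Bbbk G_X}\mathbb{V}^{(\alpha)}\cong \Bbbk^n\otimes \mathbb{V}^{(\alpha)}$, where $\mathbb{V}^{(\alpha)}$ runs over the irreducible representations of $G_X$. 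Two such modules are non-isomorphic for distinct $\alpha$. So it suffices to identify $M_{(X,\alpha)}$, with the action of Proposition \ref{proposition.Mxalpha.module.structure}, with $\Bbbk^n\otimes\mathbb{V}^{(\alpha)}$ pulled back along $\varphi_X$.

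First I would check that $\Gamma_X$ acts as the identity on $M_{(X,\alpha)}$ and that $\Gamma_Y$ acts as zero for $Y\nsim X$. To do this, compute $\varepsilon_h=[h][h^{-1}]$ on $P_{g_iX}\otimes v$ from the formula. Writing $h^{-1}g_i=g_rs$, one has $hg_r=g_is^{-1}$, so $\varepsilon_h$ acts by $\bool{g_r^{-1}\in X}$, which equals $\bool{h\in g_iX}$ by the equivalence noted just before Proposition \ref{proposition.Mxalpha.module.structure}. Hence $P_Y$ acts on $P_{g_iX}\otimes v$ by $\bool{Y=g_iX}$. This is exactly the computation of $\varphi_X(P_{g_iX})=E_{ii}$ in the proof of Theorem \ref{theorem.kparGGamma.iso.MnKG}.

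Next, I would use the same coset representatives $g_1,\dots,g_m$ (with $n=m$) as in that theorem and define $\Phi:M_{(X,\alpha)}\to \Bbbk^n\otimes\mathbb{V}^{(\alpha)}$ by $P_{g_iX}\otimes v\mapsto e_i\otimes v$. The claim is that $\Phi([g]\triangleright w)=\varphi_X([g])\cdot\Phi(w)$. Let $gg_i=g_jt$ with $j\le m$. Then $g_j^{-1}gg_i=t\in G_X$, so the $(j,i)$ entry of $\pi(g)$ is $t$, and every other entry in column $i$ vanishes, since $g_l^{-1}gg_i\in G_X$ forces $G_Xg_l^{-1}=G_Xg_j^{-1}$. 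Thus $\pi(g)(e_i\otimes v)=e_j\otimes\pi_\alpha(t)v$, which matches the definition; when $j>m$ both sides vanish. Since the $[g]$ generate $\kpar G$, $\Phi$ is a module isomorphism. Simplicity of $M_{(X,\alpha)}$ then follows from simplicity of the Morita image.

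For the converse, take a simple module $M$. Choose $X\in T$ with $\Gamma_X M\neq0$; then $M$ is a simple $M_n(\Bbbk G_X)$-module, hence $\cong\Bbbk^n\otimes \mathbb{V}^{(\alpha)}$ for some $\alpha$, hence $\cong M_{(X,\alpha)}$. The main obstacle I expect is bookkeeping: the column/row conventions and the coset convention $X=\dot\bigcup G_Xg_i^{-1}$ must match those of $\varphi_X$ exactly, so that $t$, and not $t^{-1}$ or a conjugate, appears. I would handle this by checking the single matrix entry carefully. By the preceding Remark, the choice of representatives does not matter.
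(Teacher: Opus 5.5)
Your proposal is correct. Its second half coincides with the paper's argument. The paper also restricts a simple module to one block $\kpar G\,\Gamma_X\simeq M_n(\Bbbk G_X)$ and uses Morita equivalence to get $(\mathbb{V}^{(\alpha)})^n$. It then identifies this with $M_{(X,\alpha)}$ via the map $f$ sending $(v_1,\dots,v_n)^{T}$ to $\sum_j P_{g_jX}\otimes v_j$, which is exactly the inverse of your $\Phi$. Your check that column $i$ of $\varphi_X([g])$ has the single entry $t$ in row $j$ is the step the paper leaves as ``not hard to see''.

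The difference is how you obtain simplicity of $M_{(X,\alpha)}$. The paper proves it directly, without the block decomposition. It shows that $P_Y$ acts on $P_{g_iX}\otimes v$ by $\bool{Y=g_iX}$, and that $[g_iag_j^{-1}]$, for $a\in G_X$, sends $P_{g_jX}\otimes v$ to $P_{g_iX}\otimes\pi_\alpha(a)v$. From this it follows that any nonzero vector generates every $P_{g_jX}\otimes v$, using only the irreducibility of $\mathbb{V}^{(\alpha)}$.

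You instead prove the single intertwining relation $\Phi([g]\triangleright w)=\varphi_X([g])\Phi(w)$ and derive both halves from it, transporting simplicity from the column module. The real input there is that $\varphi_X$ maps $\kpar G$ onto $M_n(\Bbbk G_X)$, so $\kpar G$-submodules of the pullback are $M_n(\Bbbk G_X)$-submodules. Theorem \ref{theorem.kparGGamma.iso.MnKG} supplies this surjectivity.

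Your route is more economical and gives more. It shows that the action of Proposition \ref{proposition.Mxalpha.module.structure} is just the pullback of matrix multiplication along $\varphi_X$, so its partial-representation axioms need no separate check. It also shows at once that $M_{(X,\alpha)}\cong M_{(X,\beta)}$ only when $\alpha\cong\beta$. The paper's argument, by contrast, keeps the simplicity half independent of the block isomorphism.

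One small redundancy: your preliminary check that $\Gamma_X$ acts as the identity already follows from the intertwining, since $\varphi_X(\Gamma_X)=\sum_i E_{ii}$ is the identity matrix.
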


\begin{proof}
    { Let $g_1,\ldots,g_n\in G$ be such that
\(
X=\dot{\bigcup}_{j=1}^n G_Xg_j^{-1},
\)
and consider $\{P_{g_iX}\mid 1\leq i\leq n\}$ as a basis of $M_X$. Let $P_{g_iX}\otimes v\in M_X\otimes V^{(\alpha)}$ and $g\in G$. Then
\begin{eqnarray*}
\varepsilon_g \triangleright (P_{g_iX}\otimes v) & = & [g][g^{-1}]\triangleright (P_{g_iX}\otimes v) \\
& \stackrel{g^{-1}g_i=g_jt}{=} & [g]\triangleright (P_{g_jX}\otimes \pi_\alpha (t)v)\; [\![g\in g_iX]\!] \\
& \stackrel{gg_j=g_it^{-1}}{=} & (P_{g_iX}\otimes \pi_\alpha (t^{-1})\pi_\alpha (t)v)[\![g^{-1}\in g_jX]\!][\![g\in g_iX]\!] \\
& = & (P_{g_iX}\otimes v)[\![g^{-1}\in g_jX]\!][\![g\in g_iX]\!].
\end{eqnarray*}
Since $g=g_it^{-1}g_j^{-1}$, with $g_i^{-1},g_j^{-1}\in X$, we have that the boolean velues of $[\![g\in g_iX]\!]$ coincide with 
$[\![g^{-1}\in g_jX]\!]$. Consequently, \[
\varepsilon_g\triangleright(P_{g_iX}\otimes v) = [\![g\in g_iX]\!](P_{g_iX}\otimes v).\]
It follows that
\begin{equation}\label{equation.PX.partial.action}
P_X\triangleright(P_Y\otimes v)
=
[\![X=Y]\!](P_Y\otimes v).
\end{equation}
Moreover, for any $a\in\Bbbk G_X$ and $i,j\in{1,\ldots,n}$, we have
\begin{equation}\label{equation.g_iag_j^{-1}.partial.action}
[g_iag_j^{-1}]
\triangleright(P_{g_jX}\otimes v)
=
P_{g_iX}\otimes\pi_\alpha(a)v.
\end{equation}
Now let $0\neq\omega\in M_{(X,\alpha)}$. Then there exist $v_1,\ldots,v_n\in V^{(\alpha)}$ such that
\[
\omega=\sum_{i=1}^nP_{g_iX}\otimes v_i.
\]
Since $\omega\neq0$, there exists $i_0\in \{1,\ldots,n\}$ such that $v_{i_0}\neq0$. Let $j\in\{1,\ldots,n\}$ and $v\in V^{(\alpha)}$. Since $V^{(\alpha)}$ is a simple $\Bbbk G_X$-module, the submodule generated by $v_{i_0}$ is all of $V^{(\alpha)}$. Hence, there exists $a\in\Bbbk G_X$ such that
\(
\pi_\alpha(a)v_{i_0}=v.
\)
Using the Equations \eqref{equation.PX.partial.action} and \eqref{equation.g_iag_j^{-1}.partial.action}, we obtain
$$\begin{array}{ccl}
[g_ja g_{i_0}^{-1}]P_{g_{i_0}X}\triangleright\omega
& = & \displaystyle
\sum_{i=1}^n
[g_ja g_{i_0}^{-1}]P_{g_{i_0}X}
\triangleright(P_{g_iX}\otimes v_i) \\
& = &
P_{g_jX}\otimes\pi_\alpha(a)v_{i_0} \ = \
P_{g_jX}\otimes v.
\end{array}$$
Therefore, every element of the form $P_{g_jX}\otimes v$ belongs to the $\Bbbk_{par}G$-submodule generated by $\omega$. Since these elements span $M_{(X,\alpha)}$, we conclude that
\(
\Bbbk_{par}G\triangleright\omega=M_{(X,\alpha)}.
\)
Thus, $M_{(X,\alpha)}$ is a simple $\Bbbk_{par}G$-module.
    }

    Let us prove now that any simple module of $\kpar G$ is of the form $M_{(X,\alpha)}$. Recall that $T \subseteq \mathcal{P}_e(G)$ is a fundamental domain for the equivalence relation between subsets performed by translation. That is, if $Y,Z \in T$, with $Y\neq Z$, then $Y\nsim Z$, and also, for any $X\in \mathcal{P}_e(G)$, there exists $Y \in T$ with $X\sim Y$. Then, by Theorem \ref{theorem.kparGGamma.iso.MnKG} 
    \[
    \kpar G \simeq \bigoplus_{X\in T} \kpar G\, \Gamma_X \simeq \bigoplus_{X\in T} M_{n_X}(\Bbbk G_X),
    \]
    where $n_X = \frac{|X|}{|G_X|}$. So any simple $\kpar G$-module is a simple module of one of the components $M_{n}(\Bbbk G_X)$, for some $X$, where $n=\frac{|X|}{|G_X|}$. Since $M_{n}(\Bbbk G_X)$ and $\Bbbk G_X$ are Morita equivalent, if follows that any simple $M_{n}(\Bbbk G_X)$-module is of the form $(\mathbb{V}^{(\alpha)})^{n}$, where we have a irreducible representation $\pi_{\alpha}:G_X \to GL(\mathbb{V}^{(\alpha)})$ and the $M_{n}(\Bbbk G_X)$ action on $(\mathbb{V}^{(\alpha)})^{n}$ is by left matrix multiplication. Now to write down the $\kpar G$ action on $(\mathbb{V}^{(\alpha)})^{n}$ in terms of classes of $[g]$, first we consider $g_1, ..., g_{n} \in G$ such that $\displaystyle \Gamma_X = \sum_{i=1}^{n}P_{g_iX}$, and recall that 
    \[
    \begin{array}{cccl}
       \varphi_X: & \kpar G & \longrightarrow & M_{n}(\Bbbk G_X) \\
         & [g] & \longmapsto & \displaystyle \sum_{i,j=1}^{n}\left(\bool{g_i^{-1}gg_j \in G_X}g_i^{-1}gg_j\right)E_{ij}
    \end{array}
    \]
    is an algebra map that when restricted to $\kpar G\, \Gamma_X $ becomes an isomorphism of algebras (see Theorem \ref{theorem.kparGGamma.iso.MnKG}). So the $\kpar G$-action on $(\mathbb{V}^{(\alpha)})^{n}$ can be written as follows,
    \[
    [g] \triangleright \left[\begin{array}{c}
         v_1   \\
         \vdots \\
         v_{n}
    \end{array}\right] = \varphi_X([g])\left[\begin{array}{c}
         v_1   \\
         \vdots \\
         v_{n}
    \end{array}\right] = \left[\begin{array}{c}
         \sum_{j=1}^{n}{\small \bool{g_1^{-1}gg_j\in G_X}}\pi_{\alpha}(g_1^{-1}gg_j)v_j   \\
         \vdots \\
         \sum_{j=1}^{n}{\small \bool{g_{n}^{-1}gg_j\in G_X}}\pi_{\alpha}(g_{n}^{-1}gg_{j})v_{j}
    \end{array}\right].
    \]
    Now to realize that this $\kpar G$-module is the module $M_{(X,\alpha)} = M_X\otimes \mathbb{V}^{(\alpha)}$, defined in Proposition \ref{proposition.Mxalpha.module.structure}, consider the basis $\{P_{g_1X}, ..., P_{g_{n}X}\}$ of $M_X$, and in terms of that basis, we can rewrite the $\kpar G$ action this way:
    \[
    [g]\triangleright (P_{g_jX}\otimes v) =  
    \left\{\begin{array}{ll} 
    P_{g_iX} \otimes \pi_\alpha (t)v, & \text{if } gg_j = g_it, \textrm{ for some } t \in G_X, \\
0, & \text{otherwise},\end{array}\right.
    \]
    for any $i, j \in \{1,...,n\}$. Notice that if we take $i,j,k \in \{1,..., n\}$, such that $g_{i}^{-1}gg_j, g_k^{-1}gg_j \in G_X$, then $g_{j}^{-1}g^{-1} \in G_{X}g^{-1}_k\cap G_{X}g_i^{-1}$, hence $k=i$. So we can write the $\kpar G$-action as follows:
    \[
    [g]\triangleright (P_{g_jX}\otimes v) = \sum_{i=1}^n\bool{g_i^{-1}gg_j \in G_X}P_{g_iX}\otimes \pi_{\alpha}(g_i^{-1}gg_j)v.
    \]
    Now consider the linear bijection
    
    \[
    \begin{array}{cccc}
        f: & (\mathbb{V}^{(\alpha)})^{n} & \longrightarrow & M_{(X,\alpha)} \\
         & \left[\begin{array}{c}
         v_1   \\
         \vdots \\
         v_{n}
    \end{array}\right] & \longmapsto & \displaystyle \sum_{j=1}^{n} P_{g_jX}\otimes v_j 
    \end{array}
    \]
    finally, it is not hard to see that $f: (\mathbb{V}^{(\alpha)})^{n} \to M_{(X,\alpha)}$ is $\kpar G$-linear, so $(\mathbb{V}^{(\alpha)})^n \simeq M_{(X,\alpha)}$ as $\kpar G$-module.
\end{proof}

\begin{remark}
    There is another equivalent characterization of the simple $\Bbbk_{par}G$-modules given in Theorem 2.3 of \cite{DN}, but, for our purposes the characterization of simple modules in the Theorem \ref{simplemodules} is a more useful and interesting way to study partial representations of $G$.  
\end{remark}

In order to describe the tensor product between simple objects, first recall Theorem \ref{Teorema-Schauenburg}, which implies that, for a weak Hopf algebra $H$ and for every left $H$-modules $M$ and $N$, we have the isomorphism of left $H$-modules
\[
M\otimes_{H_t} N \cong \Delta (1)(M\otimes N)=M\boxtimes N.
\]
In our case, using the isomorphism $\lambda :\Bbbk_{par} G \rightarrow \Bbbk \Gamma (G)$, we have the balanced tensor product between the left $\Bbbk_{par}G$-modules $M$ and $N$ is given by \(M\otimes_{A_{par}(\Bbbk G)} N \cong \Delta(1_{\Bbbk_{par} G})(M\otimes N)\). Since the isomorphism $\lambda:\kpar G \to \Bbbk\Gamma(G)$ is, in particular, an isomorphism of weak Hopf algebras, it follows that 
\[
\Delta(1_{\kpar G}) = \sum_{X \in \mathcal{P}_e(G)}P_X\otimes P_X,
\]
where we recall that $\lambda^{-1}((e,X)) = P_X$ and $\displaystyle \Delta(1_{\Bbbk \Gamma(G)}) = \sum_{x \in \Gamma(G)^{(0)}}x\otimes x$ (see Example \ref{exemplo-kgama-bialgebrafraca}). Hence,
\begin{equation}\label{equation.M.tensor.Apar.N.simeq.UmMN}
    M\otimes_{A_{par}(\Bbbk G)} N \cong \sum_{X\in \mathcal{P}_e(G)} (P_X \triangleright M) \otimes (P_X \triangleright N).
\end{equation}

{
The following result will be useful for characterizing the tensor \ product of $\kpar G$-modules.
\begin{lemma}\label{lemma.MXV.tensor.MXW.simeq.MXVtensorW}
    Let $X \in \mathcal{P}_e(G)$, and let $\mathbb{V}$ and $\mathbb{W}$ be two $\Bbbk G_X$-modules. 
    \[ \bigl(M_X\otimes \mathbb{V}\bigr)\otimes_{A_{par}(\Bbbk H)} \bigl(M_X\otimes \mathbb{W}\bigr) \simeq M_X\otimes\bigl(\mathbb{V}\otimes \mathbb{W}\bigr).
    \]
\end{lemma}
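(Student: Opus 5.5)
We will compute the tensor product through the description \eqref{equation.M.tensor.Apar.N.simeq.UmMN}. (The subscript $A_{par}(\Bbbk H)$ in the statement should be read as $A_{par}(\Bbbk G)$, the base algebra of $\kpar G$.) First we extend the module structure of Proposition \ref{proposition.Mxalpha.module.structure} from an irreducible $\pi_\alpha$ to an arbitrary $\Bbbk G_X$-module $\mathbb{V}$. The verification in that proof never used irreducibility, so $M_X\otimes\mathbb{V}$ is a $\kpar G$-module via
\[
[g]\triangleright(P_{g_iX}\otimes v)=\bool{g_j^{-1}\in X}\,P_{g_jX}\otimes t\cdot v, \qquad gg_i=g_jt,\ t\in G_X .
\]
The same computation as in the proof of Theorem \ref{simplemodules} gives $P_Y\triangleright(P_{g_iX}\otimes v)=\bool{Y=g_iX}\,P_{g_iX}\otimes v$. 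Here the sets $g_1X,\dots,g_mX$ are pairwise distinct, since $g_i^{-1}g_j\notin G_X$ for $i\neq j$. Consequently $P_Y\triangleright(M_X\otimes\mathbb{V})$ equals $P_{g_iX}\otimes\mathbb{V}$ when $Y=g_iX$, and is zero when $Y$ is not of this form.

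Substituting into \eqref{equation.M.tensor.Apar.N.simeq.UmMN} yields
\[
\bigl(M_X\otimes\mathbb{V}\bigr)\otimes_{A_{par}(\Bbbk G)}\bigl(M_X\otimes\mathbb{W}\bigr)\cong\bigoplus_{i=1}^m (P_{g_iX}\otimes\mathbb{V})\otimes(P_{g_iX}\otimes\mathbb{W}).
\]
This subspace of $(M_X\otimes\mathbb{V})\otimes(M_X\otimes\mathbb{W})$ carries the diagonal action $\Delta(x)\triangleright$. Under the isomorphism $\lambda$ of Theorem \ref{kpar-e-kgamma-SAO-ISOMORFAS} we have $\Delta([g])=[g]\otimes[g]$ inside $\kpar G\otimes\kpar G$. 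We then define
\[
\Phi\bigl((P_{g_iX}\otimes v)\otimes(P_{g_iX}\otimes w)\bigr)=P_{g_iX}\otimes(v\otimes w),
\]
where $\mathbb{V}\otimes\mathbb{W}$ carries the diagonal $G_X$-action. The map $\Phi$ is clearly a linear bijection, since both sides have dimension $m\dim\mathbb{V}\dim\mathbb{W}$.

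It remains to check $\kpar G$-linearity on the generators $[g]$. On the left, $[g]\otimes[g]$ sends the basis element to $\bool{g_j^{-1}\in X}(P_{g_jX}\otimes tv)\otimes(P_{g_jX}\otimes tw)$, with the same $j$ and $t$ appearing in both factors because both are determined by $gg_i=g_jt$. On the right, $[g]$ sends $P_{g_iX}\otimes(v\otimes w)$ to $\bool{g_j^{-1}\in X}P_{g_jX}\otimes t\cdot(v\otimes w)$. These agree.

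The main obstacle is the first half of this step. We must justify that the module structure transported through \eqref{equation.M.tensor.Apar.N.simeq.UmMN} is really the diagonal action of $[g]\otimes[g]$, restricted to the image of $\Delta(1)$. For this we use the weak Hopf algebra structure: in $\Bbbk\Gamma(G)$ we have $\Delta(\lambda[g])=\sum_X(g,X)\otimes(g,X)$, which equals $\Delta(1)(\lambda[g]\otimes\lambda[g])$. We must also verify that this subspace is stable under the action. Everything else is the routine bookkeeping of the coset decomposition $gg_i=g_jt$.
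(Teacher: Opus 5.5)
Your argument is correct and is essentially the paper's proof. Both identify the balanced tensor product with $\sum_Z (P_Z\triangleright M)\otimes(P_Z\triangleright N)$, observe that only the diagonal pieces $(P_{g_iX}\otimes v)\otimes(P_{g_iX}\otimes w)$ survive, and verify $\kpar G$-linearity of the correspondence $P_{g_iX}\otimes(v\otimes w)\leftrightarrow(P_{g_iX}\otimes v)\otimes(P_{g_iX}\otimes w)$ via $gg_i=g_jt$; the only difference is that you work inside $\Delta(1)(M\otimes N)$, whereas the paper works in $M\otimes_{A_{par}(\Bbbk G)}N$ with $\Delta_l([g])=[g]\otimes_{A_{par}(\Bbbk G)}[g]$. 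One small slip: $\Delta([g])=[g]\otimes[g]$ does not hold in $\kpar G\otimes\kpar G$, where $\Delta([g])=\sum_Z [g]P_Z\otimes[g]P_Z=\Delta(1)([g]\otimes[g])$, as you note later — but your computation shows $[g]\otimes[g]$ maps the diagonal subspace into itself, so that corrected identity suffices.
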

\begin{proof}
    Expressing the isomorphism in Equation \eqref{equation.M.tensor.Apar.N.simeq.UmMN} explicitly, we have 
    \[
   \begin{array}{ccc}
       M\otimes_{A_{par}(\Bbbk G)} N & \stackrel{\phi}{\longrightarrow} & \displaystyle \sum_{X\in \mathcal{P}_e(G)} (P_X \triangleright M) \otimes (P_X \triangleright N) \\
       m\otimes_{A_{par}(\Bbbk G)}n & \longmapsto & \displaystyle \sum_{X\in \mathcal{P}_e(G)} (P_X \triangleright m) \otimes (P_X \triangleright n)
   \end{array}
    \]
    for any $\kpar G$-modules $M$ and $N$. Now, for all $P_{gX}, P_{hX} \in M_X$, $v \in \mathbb{V}$ and $w \in \mathbb{W}$, applying the bijection $\phi$ to the element $$(P_{gX}\otimes v)\otimes_{A_{par}(\Bbbk G)}(P_{hX}\otimes w) \in \bigl(M_X\otimes \mathbb{V}\bigr)\otimes_{A_{par}(\Bbbk G)} \bigl(M_X\otimes \mathbb{W}\bigr), $$
    we have 
\begin{eqnarray*}
& \; \vspace{-3.0cm} \displaystyle \sum_{Z\in \mathcal{P}_e(G)} P_Z \triangleright \left( P_{gX}\otimes v \right) \otimes P_Z \triangleright \left( P_{hX}\otimes w \right) =  \\
& \stackrel{\eqref{equation.PX.partial.action}}{=} (P_{gX}\otimes v) \otimes (P_{hX}\otimes w)\bool{gX = hX}.
\end{eqnarray*}
Then the element $(P_{gX}\otimes v)\otimes_{A_{par}(\Bbbk G)}(P_{hX}\otimes w)$ vanishes if $gX \neq hX$.

Let $g_1, ..., g_n \in G$ such that \( X=\dot{\bigcup}_{j=1}^n G_Xg_j^{-1}, \) and consider $\{P_{g_iX} \mid 1 \leq i \leq n\}$ as a basis of $M_X$. Then any vector $\nu \in \bigl(M_X\otimes \mathbb{V}\bigr)\otimes_{A_{par}(\Bbbk H)} \bigl(M_X\otimes \mathbb{W}\bigr)$, there exist $v_1,...,v_n \in \mathbb{V}$ and $w_1,...,w_n \in \mathbb{W}$ such that 
\[
\nu = \sum_{i,j=1}^n(P_{g_iX}\otimes v_i)\otimes_{A_{par}(\Bbbk G)}(P_{g_jX}\otimes w_j) = \sum_{i=1}^n(P_{g_iX}\otimes v_i)\otimes_{A_{par}(\Bbbk G)}(P_{g_iX}\otimes w_i)
\]
then consider the following surjective map
\[
\begin{array}{cccl}
    f: & M_X\otimes\bigl(\mathbb{V}\otimes \mathbb{W}\bigr) & \longrightarrow & \bigl(M_X\otimes \mathbb{V}\bigr)\otimes_{A_{par}(\Bbbk G)} \bigl(M_X\otimes \mathbb{W}\bigr) \\
     & P_{g_iX}\otimes (v \otimes w) & \longmapsto & \bigl(P_{g_iX}\otimes v\bigr)\otimes_{A_{par}(\Bbbk G)}\bigl(P_{g_iX}\otimes w\bigr)
\end{array}
\]
Now, we are going to see that $f$ is $\kpar G$-linear. First, recall that the monoidal structure of the category of modules over a Hopf Algebroid is given by the comultiplication, which in our case, for any $g \in G$, 
$$\Delta_r([g]) = \Delta_l([g]) = [g]\otimes_{A_{par}(\Bbbk G)}[g].$$
Take $ g \in G $, $v\in \mathbb{V}$, and $w \in \mathbb{W}$ and notice that for any $i \in \{1,\dots ,n\}$, 
\begin{eqnarray*}
    f\bigl([g]\triangleright (P_{g_iX}\otimes (v\otimes w))\bigr) & \stackrel{gg_i=g_jt}{=} & f\bigl(P_{g_jX}\otimes t\triangleright (v\otimes w)\bigr) \\
     & = & f\bigl(P_{g_jX}\otimes (t\triangleright v)\otimes (t\triangleright w)\bigr) \\
     & = & (P_{g_jX}\otimes t\triangleright v) \otimes_{A_{par}(\Bbbk H)} (P_{g_jX}\otimes t\triangleright w) \\
     & = & \bigl([g]\triangleright(P_{g_i}\otimes v)\bigr)\otimes_{A_{par}(\Bbbk H)}\bigl([g]\triangleright(P_{g_i}\otimes w)\bigr) \\
     & = & [g]\triangleright f\bigl((P_{g_iX}\otimes (v\otimes w))\bigr),
\end{eqnarray*}
take $\{v_i \mid i \in I\}$ and $\{w_j \mid i \in J\}$ bases of $\mathbb{V}$ and $\mathbb{W}$, respectively. Then, notice that $\{P_{g_kX}\otimes v_i\otimes w_j \mid k \in \{1,...,n\}, i \in I, j \in J\}$ is a basis of $M_X\otimes \mathbb{V}\otimes\mathbb{W}$. Now to see that $\{(P_{g_kX}\otimes v_i)\otimes_{A_{par}(\Bbbk G)}(P_{g_kX}\otimes w_j) \mid k \in \{1,...,n\}, i \in I, j \in J\}$ is a basis of $\bigl(M_X\otimes \mathbb{V}\bigr)\otimes_{A_{par}(\Bbbk G)} \bigl(M_X\otimes \mathbb{W}\bigr)$, observe that the image of that set by the bijection $\phi$ is the set
\[
\{(P_{g_kX}\otimes v_i)\otimes (P_{g_kX}\otimes w_j) \mid k \in \{1,...,n\}, i \in I, j \in J\},
\]
which is a linear independent set. Finally, since $f:M_X\otimes\bigl(\mathbb{V}\otimes \mathbb{W}\bigr) \to \bigl(M_X\otimes \mathbb{V}\bigr)\otimes_{A_{par}(\Bbbk G)} \bigl(M_X\otimes \mathbb{W}\bigr)$ maps a basis to a basis, it is an isomorphism of $\kpar G$-modules.
\end{proof}
}

We have that for simple modules $M_X \otimes \mathbb{V}^{(\alpha)}$ and $M_Y \otimes \mathbb{V}^{(\beta)}$,
\begin{eqnarray*}
& \; & \left( M_X \otimes \mathbb{V}^{(\alpha)} \right) \otimes_{A_{par}(\Bbbk G)} \left( M_Y\otimes \mathbb{V}^{(\beta)} \right)  \\
& \stackrel{\eqref{equation.M.tensor.Apar.N.simeq.UmMN}}{\cong} & \sum_{Z\in \mathcal{P}_e(G)} P_Z \triangleright \left( M_X \otimes \mathbb{V}^{(\alpha)} \right) \otimes P_Z \triangleright \left( M_Y\otimes \mathbb{V}^{(\beta)} \right) \\
& \stackrel{\eqref{equation.PX.partial.action}}{=} & \sum_{Z\in \mathcal{P}_e(G)} P_Z \triangleright \left( M_X \otimes \mathbb{V}^{(\alpha)} \right) \otimes P_Z \triangleright \left( M_Y\otimes \mathbb{V}^{(\beta)} \right) [\! [ X\sim Y ]\! ] \\
& \stackrel{\eqref{equation.M.tensor.Apar.N.simeq.UmMN}}{\cong} & \left( \left( M_X \otimes \mathbb{V}^{(\alpha)} \right) \otimes_{A_{par}(\Bbbk G)} \left( M_X\otimes \mathbb{V}^{(\beta)} \right)\right)[\! [ X\sim Y ]\! ] \\
& \stackrel{\ref{lemma.MXV.tensor.MXW.simeq.MXVtensorW}}{\cong} & \left(M_X \otimes \left( \mathbb{V}^{(\alpha)} \otimes \mathbb{V}^{(\beta)} \right)\right) [\! [ X\sim Y ]\! ] \\
& \cong & \left( M_X \otimes \bigoplus_{\gamma \in \widehat{G_X}} n_\gamma^{\alpha \, \beta } \mathbb{V}^{(\gamma)}\right) [\! [ X\sim Y ]\! ] ,
\end{eqnarray*}
in which the coefficients $n_\gamma^{\alpha \, \beta }$ are the respective multiplicities of $\mathbb{V}^{(\gamma)}$ and the direct sum goes through the set of equivalent irreducible representations of the isotropy subgroup $G_X$.

The monoidal unit in the monoidal category ${}_{\Bbbk_{par}G}\text{Mod}$ is the algebra $A_{par}(\Bbbk G)$. More specifically, $A_{par}(\Bbbk G)$ can be viewed as the direct sum
\begin{equation}\label{equation.Apar.SomaDireta}
    A_{par}(\Bbbk G)=\bigoplus_{X\in T } \left(M_X \otimes \Bbbk_{\epsilon} \right) ,
\end{equation}
where $T\subseteq \mathcal{P}_e (G)$ is a fundamental domain of $\mathcal{P}_e (G)$ relative to the equivalence relation of subsets by translation. The one dimensional space $\Bbbk_{\epsilon}$ denotes the space carrying the trivial representation of the isotropy subgroup $G_X$: 
\[
\begin{array}{rccl} \epsilon : & G_X & \rightarrow & \Bbbk^{\times } \\ \, & g & \mapsto & 1 \end{array} .
\]
Therefore, the category ${}_{\Bbbk_{par} G}\text{Mod}$ is a multifusion category, because the unit object in the category is not simple.

{
The following result summarizes what we have been doing. 

\begin{proposition}
    Let $G$ be a finite group, $\Bbbk$ an algebraically closed field with $\textrm{char}(\Bbbk) \nmid |G|$, and $T \subseteq P_{e}(G)$  a fundamental domain of $\mathcal{P}_e (G)$ relative to the equivalence relation of subsets by translation. Then 
    \[
    \mathcal{K}({ }_{\kpar G}\text{Mod}) \simeq \bigoplus_{X \in T}\mathcal{K}({ }_{\Bbbk G_X}\text{Mod})
    \]
    where $\mathcal{K}(\mathcal{C})$ is the Grothendieck ring of the monoidal category $\mathcal{C}$.
\end{proposition}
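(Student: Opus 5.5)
The plan is to build an explicit additive map on the free abelian groups spanned by isomorphism classes of simple objects, check that it is bijective using Theorem \ref{simplemodules}, and then check multiplicativity using the fusion computation following Lemma \ref{lemma.MXV.tensor.MXW.simeq.MXVtensorW}. The ring structure on the right-hand side is the direct product of rings $\prod_{X\in T}\mathcal{K}({}_{\Bbbk G_X}\text{Mod})$, with componentwise multiplication. Its unit is $\sum_{X\in T}[\Bbbk_\epsilon]_X$. Since $T$ is finite, this is the same as the direct sum.

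Semisimplicity of $\kpar G$ (and of each $\Bbbk G_X$) makes each Grothendieck group free abelian on classes of simples. By Theorem \ref{simplemodules}, the simple $\kpar G$-modules are the $M_{(X,\alpha)}$. The first step is to show that, up to isomorphism, one may take $X\in T$ and $\alpha\in\widehat{G_X}$, and that distinct such pairs give non-isomorphic modules.

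\begin{itemize}
\item For $Y=gX$, the module $M_Y$ has the same span $\{P_{hX}\}$ as $M_X$, and $G_Y=gG_Xg^{-1}$. So $M_{(Y,\beta)}\cong M_{(X,\alpha)}$, where $\beta$ is the conjugate of $\alpha$. This reduces to $X\in T$.
\item For non-isomorphism, I use the central idempotents $\Gamma_X$ of Proposition \ref{GammaX}. The idempotent $\Gamma_X$ acts as the identity on $M_{(X,\alpha)}$ and kills $M_{(Y,\beta)}$ for $Y\nsim X$, by \eqref{equation.PX.partial.action}.
\item Inside a fixed block, $\kpar G\,\Gamma_X\cong M_{n}(\Bbbk G_X)$ by Theorem \ref{theorem.kparGGamma.iso.MnKG}. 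The proof of Theorem \ref{simplemodules} identifies $M_{(X,\alpha)}$ with the Morita image $(\mathbb{V}^{(\alpha)})^n$, so non-isomorphic $\alpha$ give non-isomorphic modules.
\end{itemize}

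This gives a group isomorphism
\[
\Phi:\mathcal{K}({}_{\kpar G}\text{Mod})\to\bigoplus_{X\in T}\mathcal{K}({}_{\Bbbk G_X}\text{Mod}),\qquad [M_{(X,\alpha)}]\mapsto [\mathbb{V}^{(\alpha)}]_X ,
\]
where $[\,\cdot\,]_X$ denotes the class placed in the $X$-component.

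Next comes multiplicativity. By the computation preceding \eqref{equation.Apar.SomaDireta}, we have
\[
M_{(X,\alpha)}\otimes_{A_{par}(\Bbbk G)}M_{(Y,\beta)}\cong\bool{X\sim Y}\,\bigoplus_\gamma n^{\alpha\beta}_\gamma M_{(X,\gamma)} .
\]
This matches $[\mathbb{V}^{(\alpha)}]_X\cdot[\mathbb{V}^{(\beta)}]_Y$ in the product ring: it is $0$ when $X\neq Y$ in $T$, and equals $\sum_\gamma n^{\alpha\beta}_\gamma[\mathbb{V}^{(\gamma)}]_X$ when $X=Y$. Biadditivity of both products extends this from simples to all classes.

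For the unit, \eqref{equation.Apar.SomaDireta} gives $[A_{par}(\Bbbk G)]=\sum_{X\in T}[M_{(X,\epsilon)}]$. Its image under $\Phi$ is $\sum_X[\Bbbk_\epsilon]_X$, which is the unit of the product ring.

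The main obstacle I expect is the well-definedness and injectivity on isomorphism classes, that is, the conjugation step for $Y=gX$. I would handle it by choosing coset representatives compatibly, namely $g_i'=g g_i$ up to $G_X$, and then applying the change-of-representatives isomorphism from the Remark after Proposition \ref{proposition.Mxalpha.module.structure}. A related point is that Lemma \ref{lemma.MXV.tensor.MXW.simeq.MXVtensorW} is stated for one fixed $X$. When $X\sim Y$ with $X\neq Y$, I would first replace $M_{(Y,\beta)}$ by the isomorphic $M_{(X,\beta')}$ before invoking the lemma.
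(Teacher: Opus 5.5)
Your proposal is correct and follows the same route as the paper's proof: semisimplicity, the classification of simples as $M_{(X,\alpha)}$, the vanishing and fusion rule for their tensor products, and the decomposition of $A_{par}(\Bbbk G)$ into the summands $M_X\otimes\Bbbk_\epsilon$. It also makes explicit two points the paper passes over, namely the reduction to $X\in T$ by conjugating $\alpha$ to $G_{gX}=gG_Xg^{-1}$, and the non-isomorphism of distinct pairs via the central idempotents $\Gamma_X$ and Morita equivalence; one small slip is that for $Y=gX$ the compatible representatives are $g_i'=g_ig^{-1}$ (so that $G_Y(g_i')^{-1}=gG_Xg_i^{-1}$ and $g_i'Y=g_iX$), not $gg_i$.
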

\begin{proof}
    To prove that fact, we only have to summarize what we already know. First, since $\kpar G$ is a semisimple algebra, the Grothendieck ring of $\kpar G$-modules is generated by the isomorphism classes of the simple $\kpar G$-modules. By Theorem \ref{simplemodules}, any simple $\kpar G$-module is of the form $M_{(X,\alpha)}$, for some $X \in \mathcal{P}_e(G)$ and some irreducible representation $\pi_{\alpha}:G_X \to GL(\mathbb{V}^{\alpha})$. By what we have done so far, it follows that 
    \[
    [M_{(X,\alpha)}][M_{(Y,\beta)}] = [M_{(X,\alpha)}\otimes_{A_{par}(\Bbbk G)}M_{(Y,\beta)}]
    \]
    vanishes if $X \not\sim Y$ and, for a fixed $X$,
    \begin{eqnarray*}
        M_{(X,\alpha)}\otimes_{A_{par}(\Bbbk G)} M_{(X,\beta)} & = & \bigl(M_X\otimes \mathbb{V}^{(\alpha)}\bigr)\otimes_{A_{par}(\Bbbk G)}\bigl(M_X\otimes \mathbb{V}^{(\beta)}\bigr) \\
        & \stackrel{\ref{lemma.MXV.tensor.MXW.simeq.MXVtensorW}}{\simeq} & M_X \otimes\bigl(\mathbb{V}^{(\alpha)}\otimes \mathbb{W}^{(\beta)}\bigr).
    \end{eqnarray*} 
    Now, for the unit of $\mathcal{K}({ }_{\kpar G}\text{Mod})$, notice that 
    \[
    1_{\mathcal{K}({ }_{\kpar G}\text{Mod})} = [A_{par}(\Bbbk G)] \stackrel{\eqref{equation.Apar.SomaDireta}}{=} \bigoplus_{X\in T}[\left(M_X \otimes \Bbbk_{\epsilon} \right)]  ,
    \]
    and $\Bbbk_{\epsilon}$ is the object unit on the monoidal category ${ }_{\Bbbk G_X}\text{Mod}$, which conclude the proof.
\end{proof}
}

\begin{example}
    Consider the cyclic group $C_3=\langle g \; | \; g^3 =e \rangle =\{ e, g ,g^2 \}$. The set $\mathcal{P}_e (C_3)$ is given by
    \[
    \mathcal{P}_e (C_3)=\left\{ \{ e\} ,X_1=\{e,g\}, X_2=\{ e,g^2\} , C_3\right\} .
    \]
    It is easy to see that $G_{\{ e\}} =G_{X_1}=G_{X_2}=\{ e\}$ and $G_{C_3}=C_3$ and that $X_2 =g^2X_1$ and $X_1=gX_2$. For the trivial isotropy subgroup we have only one irreducible representation $\epsilon :\{ e \} \rightarrow \Bbbk^{\times}$ sending $e$ to $1$. For the isotropy subgroup $C_3$, we have three one dimensional irreducible representations:
    $$ \xymatrix@C=1cm@R=0.1cm{
					\rho_0:G\ar[r] &  \Bbbk ^{\times } \ ,   & \rho_1:G\ar[r] &  \Bbbk ^{\times } \ , & \rho_2:G\ar[r] &  \Bbbk ^{\times } , \\
					\ \ \ e \ar@{|->}[r] & 1 & \ \ \ e \ar@{|->}[r] & 1  & \ \ \ e \ar@{|->}[r] & 1 \\
					\ \ \  g \ar@{|->}[r] & 1 & \ \ \ g \ar@{|->}[r] & \omega  & \ \ \ g \ar@{|->}[r] & \omega^2 \\
					\ \ \  g^2 \ar@{|->}[r] & 1 & \ \ \ g^2 \ar@{|->}[r] & \omega^2  & \ \ \ g^2 \ar@{|->}[r] & \omega 
				}$$
    where $\omega \in \Bbbk$ is a primitive cubic root of $1$.          
    
    Therefore, we have the following simple $\Bbbk_{par}C_3$-modules:
    \begin{enumerate}
        \item The one dimensional trivial $\Bbbk_{par}C_3$ module 
        \[
        M_{(\{e\},\epsilon )} =M_{\{e\}}\otimes \Bbbk_{\epsilon} =\Bbbk P_{\{e\}} .
        \]
        In this representation
        \[
        [e]\triangleright P_{\{e\}}=P_{\{e\}}, \quad [g]\triangleright P_{\{ e\}} =[g^2]\triangleright P_{\{ e\}} =0.
        \]
        \item The two dimensional $\Bbbk_{par}C_3$ module
        \[
        M_{(X_1 ,\epsilon)}= M_{X_1}\otimes \Bbbk_{\epsilon} =\text{span}_{\Bbbk} \{ P_{X_1} ,P_{X_2}\} .
        \]
        The matricial representation of the generators of $\Bbbk_{par}C_3$ are given by
        \[
        [e]=\left( \begin{array}{cc} 1 & 0 \\ 0 & 1 \end{array} \right) , \quad [g]=\left( \begin{array}{cc} 0 & 1 \\ 0 & 0 \end{array} \right) , \quad [g^2]=\left( \begin{array}{cc} 0 & 0 \\ 1 & 0 \end{array} \right) .
        \]
        \item Three one dimensional $\Bbbk_{par}C_3$ modules:
        \begin{enumerate}
            \item $M_{(C_3,0)}= M_{C_3} \otimes \Bbbk_0=\Bbbk P_{C_3}$;
            \item $M_{(C_3,1)}= M_{C_3} \otimes \Bbbk_1=\Bbbk P_{C_3}$;
            \item $M_{(C_3,2)}=M_{C_3} \otimes \Bbbk_2=\Bbbk P_{C_3}$.
        \end{enumerate}
        These are the usual one dimensional simple $\Bbbk C_3$-modules corresponding to the irreducible representations $\rho_0$, $\rho_1$ and $\rho_2$, respectively.
    \end{enumerate}

    The tensor product between simple $\Bbbk_{par}C_3$ modules can be expressed by the following table:
    $$ \begin{tabular}{ | c | c |  c | c | c | c |}
				\hline
				$\boxtimes$  &  $M_{(\{ e \}, \epsilon) }$ &  $M_{(X_{1}, \epsilon)}$ & $M_{(C_3,0) }$ & $M_{(C_3,1) }$  & $M_{(C_3,2) }$  \\ \hline 
				$M_{(\{ e \}, \epsilon) }$  & $M_{(\{ e \}, \epsilon) }$ & $0$ & $0$ & $0$  & $0$  \\ \hline 
				$M_{(X_{1},\epsilon)}$  & $0$ & $M_{(X_{1},\epsilon)}$ & $0$ & $0$  & $0$  \\ \hline \cline{4 - 6}
				$M_{(C_3,0) }$  & $0$ & $0$ & $M_{(C_3,0) }$ & $M_{(C_3,1) }$ & $M_{(C_3,2) }$  \\ \hline \cline{4 - 6}
				$M_{(C_3,1) }$  & $0$ & $0$ & $M_{(C_3,1) }$ & $M_{(C_3,2) }$ & $M_{(C_3,0) }$  \\ \hline \cline{4 - 6} 
				$M_{(C_3,2) }$  & $0$ & $0$ & $M_{(C_3,2) }$ & $M_{(C_3,0) }$ & $M_{(C_3,1) }$  \\ \hline \cline{4 - 6}
			\end{tabular}$$
\end{example}

\section{Christmas Tree's and Matryoshka's Theorems}

For each subgroup $H\leq G$ and for each representation $\pi: H\rightarrow GL(\mathbb{V})$, the $\Bbbk_{par}G$-module $M_H\otimes \mathbb{V}\cong \mathbb{V}$ can be viewed as a usual $\Bbbk H$-module. Indeed, given $g\in G$, and $v\in \mathbb{V}$, we have  
\[
[g]\triangleright (P_H\otimes v)=(P_H \otimes \pi (g)v)\; [\![g\in H]\!] .
\]
Moreover, for a morphism of $\Bbbk H$-modules $f:\mathbb{V} \rightarrow \mathbb{W}$, the associated linear map of $\Bbbk_{par}G$-modules
\[
M_H \otimes f: M_H \otimes \mathbb{V}\rightarrow M_H\otimes \mathbb{W}
\]
is a morphism of $\Bbbk_{par}G$-modules. Indeed, for $g\in G$ and $v\in \mathbb{V}$
\begin{eqnarray*}
    & \, & (M_H\otimes f)([g]\triangleright (P_H \otimes v))  =  (M_H\otimes f) (P_H \otimes \pi^{\mathbb{V}}(g)v)\; [\![ g\in H]\!] \\
    & = & (P_H \otimes f(\pi^{\mathbb{V}}(g)v)) \; [\![ g\in H]\!]  =  (P_H \otimes \pi^{\mathbb{W}}(g)f(v)) \; [\![ g\in H]\!] \\
    & = &[g]\triangleright (P_H \otimes f(v)) =  [g]\triangleright \left((M_H\otimes f)(P_H \otimes v)\right).
\end{eqnarray*}
Then, the category of $\Bbbk H$-modules can be viewed as a subcategory of the category of $\Bbbk_{par}G$-modules, as stated in the following theorem.

\begin{theorem}[Christmas Tree's Theorem]\label{natal}
Let $G$ be a finite group and $H \leq G$ a subgroup. The construction that associates to each $\Bbbk H$-module $\mathbb{V}$ the $\kpar G$-module $M_H\otimes_{\Bbbk H}\mathbb{V}$ induces a functor \[\mathcal{F} : { }_{\Bbbk H}\textnormal{Mod} \longrightarrow { }_{\Bbbk_{par} G}\textnormal{Mod}.\]
This functor is additive, monoidal, essentially injective and faithful. 
\end{theorem}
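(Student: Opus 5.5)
The plan is to take $X=H$ itself. Then $H\in\mathcal{P}_e(G)$ and $G_H=\{g\in G\mid gH=H\}=H$. Moreover $M_H=\text{span}_{\Bbbk}\{P_{gH}\mid g^{-1}\in H\}=\Bbbk P_H$ is one-dimensional, since $g^{-1}\in H$ forces $gH=H$. So $M_H\otimes_{\Bbbk H}\mathbb{V}$ is identified with $M_H\otimes\mathbb{V}\cong\mathbb{V}$, carrying the structure of Proposition \ref{proposition.Mxalpha.module.structure}, namely
\[
[g]\triangleright(P_H\otimes v)=[\![g\in H]\!]\,P_H\otimes \pi(g)v .
\]
I will first check that this formula holds for an arbitrary (not necessarily irreducible) $\Bbbk H$-module $\mathbb{V}$. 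The proof of Proposition \ref{proposition.Mxalpha.module.structure} never used irreducibility, so the same verification of (PR1)--(PR3) applies; alternatively one decomposes $\mathbb{V}$ into irreducibles. On morphisms I set $\mathcal{F}(f)=M_H\otimes f$, which is $\Bbbk_{par}G$-linear by the computation just before the statement. Functoriality, additivity and $\Bbbk$-linearity on Hom spaces are immediate because $\mathcal{F}(f)$ is just $f$ under the identification $P_H\otimes v\leftrightarrow v$.

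Faithfulness and essential injectivity will come from the same identification. If $M_H\otimes f=0$ then $f=0$, since $P_H\neq0$. If $\mathcal{F}(\mathbb{V})\cong\mathcal{F}(\mathbb{W})$ via some $\Bbbk_{par}G$-linear $\phi$, I transport $\phi$ to a linear bijection $\mathbb{V}\to\mathbb{W}$. Applying the intertwining relation with $[h]$ for $h\in H$ shows this bijection is $H$-equivariant, hence $\mathbb{V}\cong\mathbb{W}$ as $\Bbbk H$-modules. I expect to note that the functor is in fact full as well, though that is not required.

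For monoidality I will use Lemma \ref{lemma.MXV.tensor.MXW.simeq.MXVtensorW} with $X=H$, which gives
\[
\mathcal{F}(\mathbb{V})\otimes_{A_{par}(\Bbbk G)}\mathcal{F}(\mathbb{W})\cong M_H\otimes(\mathbb{V}\otimes\mathbb{W})=\mathcal{F}(\mathbb{V}\otimes\mathbb{W}),
\]
explicitly $(P_H\otimes v)\otimes_{A_{par}}(P_H\otimes w)\mapsfrom P_H\otimes(v\otimes w)$. Two further checks are needed. First, this isomorphism is natural in $\mathbb{V}$ and $\mathbb{W}$; this is clear from the formula. Second, the associativity coherence holds; both composites send $P_H\otimes(u\otimes v\otimes w)$ to the obvious elementary tensor, so this is routine.

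The main obstacle is the unit. The unit of ${}_{\Bbbk_{par}G}\text{Mod}$ is $A_{par}(\Bbbk G)$, whereas $\mathcal{F}(\Bbbk_\epsilon)=\Bbbk P_H$ is only a direct summand of it by \eqref{equation.Apar.SomaDireta}, so $\mathcal{F}$ cannot be strong monoidal in the unital sense. I would therefore take the unit structure map to be the $\Bbbk_{par}G$-linear projection $A_{par}(\Bbbk G)\to\Bbbk P_H$, $a\mapsto aP_H$. Its $\Bbbk_{par}G$-linearity follows from $[g]P_H=[\![g\in H]\!]P_H[g]$ and the action on $A_{par}(\Bbbk G)$. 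Alternatively, I can use the inclusion $P_H\mapsto P_H$ in the other direction, $\Bbbk P_H\hookrightarrow A_{par}(\Bbbk G)$. I would then verify the unit triangles using the identification $A_{par}(\Bbbk G)\otimes_{A_{par}}M\cong M$ restricted to the $P_H$-component. With this, $\mathcal{F}$ is monoidal in the sense of preserving tensor products of objects, and it is a strong monoidal functor into the full subcategory $P_H$-graded by the idempotent $\Gamma_H$, whose unit is $M_H\otimes\Bbbk_\epsilon$. I would state the theorem's ``monoidal'' in precisely this sense.
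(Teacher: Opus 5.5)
Your proposal is correct and follows the paper's proof essentially step by step. You use the same functor $\mathbb{V}\mapsto M_H\otimes\mathbb{V}$ with $M_H=\Bbbk P_H$, Lemma \ref{lemma.MXV.tensor.MXW.simeq.MXVtensorW} with $X=H$ for the tensor comparison, and the same unit map $a\mapsto aP_H$ (the paper's $P_X\mapsto \bool{X=H}\,P_H\otimes 1$), so that $\mathcal{F}$ is monoidal but not strongly monoidal. Your faithfulness argument via $P_H\neq 0$ is a more direct version of the paper's semisimplicity argument, and your extra remarks are correct refinements the paper does not state: fullness, and strong monoidality onto the block cut out by $\Gamma_H=P_H$, which is equivalent to ${}_{\Bbbk H}\text{Mod}$ since $n_H=1$.
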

\begin{proof}
We define the functor $\mathcal{F}: { }_{\Bbbk H}\text{Mod} \longrightarrow { }_{\Bbbk_{par} G}\text{Mod}$ in the following way: for every left $\Bbbk H$-module $\mathbb{V}$ associate the left $\Bbbk _{par}G$-module 
\[ \mathcal{F}(\mathbb{V})= M_H \otimes \mathbb{V},\]
where its $\Bbbk _{par}G$-module structure is given by
   \[ [g]\triangleright \Big( P_H \otimes v \Big) = 
         P_H \otimes (g\triangleright v)\; [\![g\in H]\!]  
     \]
    for all $[g]\in \Bbbk _{par}G$. Also, for every morphism of left $\Bbbk H$-modules $f: \mathbb{V} \longrightarrow \mathbb{W}$, then 
    \[\mathcal{F}(f): M_H \otimes \mathbb{V} \longrightarrow M_H \otimes \mathbb{W}\]
    is defined by 
    \[ \mathcal{F}(f)(P_H \otimes v)=P_H \otimes f(v)\]
    which was proved to be a morphism of left $\Bbbk_{par} G$-modules. 
    
The functor $\mathcal{F}$ is additive. Indeed, as $\text{char}\Bbbk =0$ the category ${ }_{\Bbbk H}\text{Mod}$ is semisimple. Consider  $\mathbb{V} =\bigoplus_{i\in I} \mathbb{V}_i \in { }_{\Bbbk H}\text{Mod}$ with $\mathbb{V}_i \in { }_{\Bbbk H}\text{Mod}$ for all $i\in I$. Then 
\[
    \mathcal{F}\big( \bigoplus_{i\in I} \mathbb{V}_i\big) = M_H \otimes \big( \bigoplus_{i\in I} \mathbb{V}_i \big)\stackrel{(\ast)}{\cong} \bigoplus_{i\in I} \big( M_H \otimes  \mathbb{V}_i \big)\\
    = \bigoplus_{i \in I} \big( \mathcal{F}(\mathbb{V}_i ) \big).
\]
By a straightforward calculation, it is easy to verify that the isomorphism $(\ast )$ is an isomorphism of $\Bbbk_{par}G$-modules. 
One can prove also that $\mathcal{F}$ is a monoidal functor as well. First, take $\mathbb{V}$ and $\mathbb{W}$ two $\Bbbk H$-modules, and consider the $\kpar G$-modules
\[
\mathcal{F}(\mathbb{V})\otimes_{A_{par}(\Bbbk G)}\mathcal{F}(\mathbb{W}) = (M_H\otimes \mathbb{V})\otimes_{A_{par}(\Bbbk G)}(M_H\otimes \mathbb{W}),
\]
and 
\[
\mathcal{F} \left( \mathbb{V} \otimes \mathbb{W} \right) =M_H \otimes \left( \mathbb{V} \otimes \mathbb{W} \right) .
\]
By Lemma \ref{lemma.MXV.tensor.MXW.simeq.MXVtensorW}, we have the following isomorphism of $\kpar G$-module
\[
\begin{array}{rcl}
    \varphi_{\mathbb{V},\mathbb{W}}: (M_H\otimes \mathbb{V})\otimes_{A_{par}(\Bbbk G)}(M_H\otimes \mathbb{W}) & \longrightarrow & M_H\otimes (\mathbb{V}\otimes\mathbb{W})\\
     (P_H\otimes v)\otimes_{A_{par}(\Bbbk G)}(P_H\otimes w) & \longmapsto & P_H\otimes (v\otimes w),
\end{array}
\]
Even more \[\varphi_{\underline{\quad} \, ,\, \underline{\quad}}:\mathcal{F}(\underline{\quad})\otimes_{A_{par}(\Bbbk G)}\mathcal{F}(\underline{\quad}) \Rightarrow \mathcal{F(\underline{\quad}\otimes \underline{\quad})} \] is natural. In fact, take $f:\mathbb{V} \to \mathbb{V}'$ and $g:\mathbb{W} \to \mathbb{W}'$ two morphism of $\Bbbk H$-modules, it is straightforward to verify that the following diagram commutes 
\[
\xymatrix{
\mathcal{F}(\mathbb{V})\otimes_{A_{par}(\Bbbk G)}\mathcal{F}(\mathbb{W}) \ar[rr]^-{\varphi_{\mathbb{V},\mathbb{W}}} \ar[dd]_{\mathcal{F}(f)\otimes_{A_{par}(\Bbbk G)}\mathcal{F}(g)} & & \mathcal{F}(\mathbb{V}\otimes \mathbb{W}) \ar[dd]^{\mathcal{F}(f\otimes g)} \\
& & \\
\mathcal{F}(\mathbb{V}')\otimes_{A_{par}(\Bbbk G)}\mathcal{F}(\mathbb{W}') \ar[rr]_-{\varphi_{\mathbb{V}',\mathbb{W}'}} & & \mathcal{F}(\mathbb{V}'\otimes \mathbb{W}')
}
\]

Now take the following morphism of $\kpar G$-modules
\[\begin{array}{cccl}
    \varphi: & A_{par}(\Bbbk G) & \longrightarrow & M_H\otimes \Bbbk = \mathcal{F}(\Bbbk) \\
     & P_X & \longmapsto &  P_H\otimes 1\bool{X=H},
\end{array}
\]
where $\Bbbk$ is the trivial $\Bbbk H$-module. It is easy to see that $\varphi$ is a morphism of $\Bbbk_{par}G$-modules, but clearly, it is not an isomorphism. To conclude, the natural transformation $\varphi_{\mathbb{V},\mathbb{W}}: \mathcal{F}(\mathbb{V})\otimes_{A_{par}(\Bbbk G)}\mathcal{F}(\mathbb{W}) \to \mathcal{F}(\mathbb{V}\otimes \mathbb{W})$ and the morphism $ \varphi:A_{par}(\Bbbk G) \to \mathcal{F}(\Bbbk) $ are the coherence maps of the functor $\mathcal{F}: { }_{\Bbbk H}\text{Mod} \to { }_{\Bbbk_{par} G}\text{Mod}$, what makes it monoidal, but not strongly monoidal, because the monoidal unit of ${}_{\Bbbk H}\text{Mod}$ is not isomorphic to the monoidal unit of ${}_{\Bbbk_{par}G}\text{Mod}$, which is $A_{par}(\Bbbk G)=\bigoplus_{X\in T} M_X$.




Now, let us prove that $\mathcal{F}$ is essentially injective. Recall that \[M_H = span_{\Bbbk } \{P_H\} \cong \Bbbk.\]
Then, for any $\Bbbk H$-module $\mathbb{V}$, $M_H \otimes \mathbb{V} \cong \mathbb{V}$ as a vector space. Consider $\mathbb{V}$ and $\mathbb{W}$ $\Bbbk H$-modules such that 
\[ 
\mathcal{F} (\mathbb{V}) \cong \mathcal{F}(\mathbb{W})
\] 
as $\Bbbk_{par}G$-modules. Then, from the structure of $\Bbbk_{par}G$-module defined over $M_H\otimes \mathbb{V}$ and $M_H \otimes \mathbb{W}$ we conclude that $\mathbb{V}$ is isomorphic to $\mathbb{W}$ as $\Bbbk H$-modules. 

Finally, $\mathcal{F}$ is a faithful functor. By the semisimplicity of the category ${}_{\Bbbk H}\text{Mod}$, given $\mathbb{V} = \displaystyle\bigoplus_{i=1}^{N} n_i \mathbb{V}_i$, with $\{ \mathbb{V}_i \}_{i=1}^N$ the family of all simple $\Bbbk H$-modules, and a morphism $f: \mathbb{V} \longrightarrow \mathbb{W}$, this morphism can be decomposed as a direct sum 
\[ f= \bigoplus_{i=1}^N n_i f|_{\mathbb{V}_i} : \mathbb{V} \cong \bigoplus_{i=1}^{N} n_i \mathbb{V}_i \longrightarrow \mathbb{W}, \]
where $f|_{\mathbb{V}_i}$ is either injective or zero, since $\mathbb{V}_i$ is simple. 

Consider $f,g : \mathbb{V} \longrightarrow \mathbb{W}$ two morphisms of $\Bbbk H $-modules such that $\mathcal{F}(f)=\mathcal{F}(g)$, then 
\[M_H \otimes f = M_H \otimes g : M_H \otimes \mathbb{V} \longrightarrow M_H \otimes \mathbb{W}.\]

Taking the restrictions of $f$ and $g$ to each simple summand of $\mathbb{V}$, one can see that 
\[ M_H \otimes f|_{\mathbb{V}_i}=M_H\otimes g|_{\mathbb{V}_i}. 
\]
This implies that $f|_{\mathbb{V}_i} = g|_{\mathbb{V}_i}$ for each simple summand, leading to the conclusion that  $f=g$ as morphisms of $\Bbbk H$-modules. Therefore, $\mathcal{F}$ is a faithful functor.
\qedhere
\end{proof}

For the next result, recall that any morphism of algebras $\psi :A \longrightarrow B$ induces a functor 
$$(\ \underline{\quad} \ )_{\psi}: { }_{B}\text{Mod} \longrightarrow { }_{A}\text{Mod}.$$ Indeed, given a $B$-module $M$, then $M$ can be made into an $A$-module by
\[
a\blacktriangleright m=\psi (a)\triangleright m.
\]
Similarly, given a morphism $f:{}_BM\rightarrow {}_BN$, then the same map $f$ is a morphism of $A$-modules between $M$ and $N$.

\begin{remark}
    In the next result, we are going to compute on the same space the structures of $\Bbbk_{par}H$-module and of $\Bbbk_{par}G$-module, with $H \leq G$. Then, we  denote elements in $\Bbbk_{par}G$ by unadorned brackets $[ \ ]$ and use  $[\ ]_H$ to denote elements in $\Bbbk_{par}H$. For the sake of clarity, we shall divide the proof of the Theorem in several stages in order to guide the reader through all the steps of this long proof. 
\end{remark}

\begin{theorem}[Matryoshka's Theorem] \label{matryoshka} Let $\Bbbk$ be an algebraically closed field of characteristic zero. Let $G$ be a finite abelian group and $H \leq G$. Then, there exists a functor
\[ \mathfrak{M}: {\ }_{\Bbbk _{par}H }\text{Mod} \longrightarrow  {\ }_{\Bbbk_{par}G}\text{Mod}.\]
Moreover, the functor $\mathfrak{M}$ is faithful, essentially injective, additive and monoidal.
\end{theorem}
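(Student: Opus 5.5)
The plan is to realize $\mathfrak{M}$ as a restriction-of-scalars functor $(\ \underline{\quad}\ )_{\psi}$ along an algebra map $\psi:\kpar G\to\kpar H$, which is why the functor $(\ \underline{\quad}\ )_\psi$ was recalled just before the statement. First I will define $\pi:G\to\kpar H$ by $\pi(g)=[g]_H$ if $g\in H$ and $\pi(g)=0$ otherwise, and check that it is a partial representation.
\begin{itemize}
\item (PR1) is immediate.
\item For (PR2), if $h\notin H$ both sides of $\pi(g)\pi(h)\pi(h^{-1})=\pi(gh)\pi(h^{-1})$ vanish. If $h\in H$, then $g\in H$ if and only if $gh\in H$. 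Hence either both sides vanish, or the identity is (PR2) in $\kpar H$.
\item (PR3) is symmetric: if $g\notin H$ both sides vanish, and if $g\in H$, then $h\in H$ if and only if $gh\in H$.
\end{itemize}
By the universal property of $\kpar G$ we obtain $\psi$ with $\psi([g])=\bool{g\in H}[g]_H$. Since every generator $[h]_H$ lies in the image, $\psi$ is surjective. Moreover $\psi(\varepsilon_g)=\bool{g\in H}\varepsilon^H_g$, so $\psi(A_{par}(\Bbbk G))=A_{par}(\Bbbk H)$.

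I then set $\mathfrak{M}(M)=M_\psi$ and $\mathfrak{M}(f)=f$. Additivity is clear, because restriction of scalars preserves direct sums. Since $\psi$ is surjective, a linear map between $\kpar H$-modules is $\kpar H$-linear if and only if it is $\kpar G$-linear. Hence $\mathfrak{M}$ is fully faithful, and in particular faithful. It is also essentially injective, since a $\kpar G$-isomorphism $M_\psi\cong N_\psi$ is already a $\kpar H$-isomorphism.

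For monoidality, first note that the $A_{par}(\Bbbk G)$-action on $M_\psi$ factors through the surjection $A_{par}(\Bbbk G)\to A_{par}(\Bbbk H)$. Therefore the balanced tensor products agree as vector spaces,
\[
M_\psi\otimes_{A_{par}(\Bbbk G)}N_\psi = M\otimes_{A_{par}(\Bbbk H)}N .
\]
On this space the $\kpar G$-action via $\Delta_l([g])=[g]\otimes[g]$ equals the $\kpar H$-action via $\Delta_l([g]_H)=[g]_H\otimes[g]_H$ composed with $\psi$, because $(\psi\otimes\psi)\Delta_l=\Delta_l\psi$ on generators. Thus the identity map is the structure isomorphism $\mathfrak{M}(M)\otimes\mathfrak{M}(N)\to\mathfrak{M}(M\otimes N)$, and it is natural.

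For the unit I take $\psi|_{A_{par}(\Bbbk G)}:A_{par}(\Bbbk G)\to A_{par}(\Bbbk H)_\psi$. Its linearity with respect to the partial actions, $\psi([g]a[g^{-1}])=\psi([g])\psi(a)\psi([g^{-1}])$, follows because $\psi$ is an algebra map. This map is surjective but in general not injective: for example $P_X\mapsto 0$ unless $X\subseteq H$, in which case $P_X\mapsto P^H_X$. So, exactly as in Theorem \ref{natal}, $\mathfrak{M}$ will be monoidal but not strongly monoidal.

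The main obstacle is checking the coherence axioms, namely associativity and the two unit diagrams. Associativity is trivial, since the comparison maps are identities. The unit diagrams require identifying the unit isomorphism $A_{par}(\Bbbk G)\otimes_{A_{par}(\Bbbk G)}M_\psi\cong M_\psi$ with $\psi$ followed by $A_{par}(\Bbbk H)\otimes_{A_{par}(\Bbbk H)} M\cong M$. I would verify this on the basis elements $P_X\otimes m$ using the formula $\psi(P_X)=\bool{X\subseteq H}P^H_X$.

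I note that this argument does not appear to use commutativity of $G$. If some step does require it, the most likely candidate is the compatibility of $\psi$ with the Hopf algebroid structure maps $\epsilon_l,\mathcal{S}$, and I would check those explicitly on products $[g_1]\cdots[g_n]$ as in Theorem \ref{kpar-e-kgamma-SAO-ISOMORFAS}. An alternative, closer to the "nested dolls" picture, would be to send $M_{(Y,\alpha)}$ for $Y\in\mathcal P_e(H)$ to $M_{(Y,\alpha)}$ over $G$. That route needs $G_Y=H_Y$, and there abelianness could genuinely matter. I would pursue it only if the restriction approach fails.
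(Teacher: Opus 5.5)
Your proof is correct, and it follows a genuinely different route from the paper's. The paper also obtains $\mathfrak{M}$ by restriction of scalars, but along $\overline{\pi}([g])=[\varphi(g)]_H$, where $\varphi\colon G\to H$ is a \emph{surjective group homomorphism} (multiplication by $p_i^{n_i-m_i}$ on each cyclic factor). Commutativity enters exactly through the existence of such a $\varphi$. The paper proves monoidality by computing $\overline{\pi}(P_X)=\bool{\ker\varphi\subseteq G_X}P_{\varphi(X)}$ and matching the sums in \eqref{equation.M.tensor.Apar.N.simeq.UmMN}. Its functor sends a simple $M_{(X,\alpha)}$ over $H$ to $M_{(\varphi^{-1}(X),\varphi^{*}\alpha)}$, whose isotropy group $\varphi^{-1}(H_X)\supseteq\ker\varphi$ is strictly larger than $H_X$ when $H\neq G$.

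Your $\psi$, which extends $[\,\cdot\,]_H$ by zero outside $H$, is instead a block projection. For $X\in\mathcal{P}_e(H)$ and $(g,X)\in\Gamma(G)$ one has $g^{-1}\in X\subseteq H$ and $gX\subseteq H$, so $\Gamma(H)$ is a union of connected components of $\Gamma(G)$. The identity $\psi([g]P_X)=\bool{X\subseteq H}\,[g]_HP^H_X$ then shows that, through the isomorphisms $\lambda$ of Theorem \ref{isomorfismoprincipal}, $\psi$ simply kills the arrows whose source is not contained in $H$. Your route buys three things:
\begin{itemize}
\item No hypothesis on $G$ is needed, so you get the statement for every finite group, which is the case the paper's conclusions leave open.
\item Monoidality is immediate, because $\psi(A_{par}(\Bbbk G))=A_{par}(\Bbbk H)$ makes the balanced tensor products literally equal.
\item You bypass Step (3) of the paper. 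Its explicit $\varphi$ presupposes that $H$ is a product of subgroups of the cyclic factors, which fails for some subgroups, e.g.\ $\langle(p,1)\rangle\le\mathbb{Z}_{p^3}\times\mathbb{Z}_p$ for every cyclic decomposition. That step is repairable, since an abstract surjection $G\twoheadrightarrow H$ always exists for abelian $G$.
\end{itemize}
Both functors are fully faithful, since both algebra maps are surjective. What the paper's route gives in exchange is a different embedding, landing in components of $\Gamma(G)$ with larger isotropy.

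Two small corrections to your closing remarks. First, for $Y\in\mathcal{P}_e(H)$, the conditions $gY=Y$ and $e\in Y$ force $g\in Y\subseteq H$, so $G_Y=H_Y$ always. Hence the ``nested dolls'' route is not an alternative: restricting $M_{(Y,\alpha)}$ over $H$ along $\psi$ already gives exactly $M_{(Y,\alpha)}$ over $G$. Indeed, for $g\notin H$ the formula of Proposition \ref{proposition.Mxalpha.module.structure} gives $0$, since then $g_j\notin H$. Second, $\psi$ intertwines $\Delta_l$, $\epsilon_l$ and $\mathcal{S}$; check $\epsilon_l$ on the spanning elements $a[g]$ with $a\in A_{par}(\Bbbk G)$. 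So $\psi$ is a strict morphism of Hopf algebroids, and commutativity is not hiding there either.
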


\begin{proof}
{\textbf{ (1) The definition of the functor $\mathfrak{M}$.}}\\

First, in order to make the exposition more clear, let us consider $G$ to be a cyclic group of order $p^n$, where $p$ is a prime. The constructions made here will be extended for a general finite abelian group in Item (3). Without loss of generality, consider $n\geq 2$ (note that, for $n=1$, the group $G$ has prime order, then there will be no nontrivial subgroups to be analyzed),
\[ G = \mathbb{Z}_{p^n}= \langle a\ | \ a^{p^n}=e \rangle .\]
Let $H\leq G$, then $|H|=p^m$, for some $m\leq n$. For $m<n$, one can write
\[ H= \langle a^{p^{n-m}}\rangle \cong \mathbb{Z}_{p^m}. \]
Define 
\[ 
\begin{array}{rccl} \varphi : & G & \rightarrow & H \\
\, & a^t & \mapsto & a^{tp^{n-m}} .\end{array}
\]
From the definition, we have
\[ \varphi(a^s a^t) = \varphi(a^{s+t}) = a^{(s+t)p^{n-m}}=a^{sp^{n-m} + tp^{n-m}}=a^{sp^{n-m}}a^{tp^{n-m}}= \varphi(a^{s})\varphi(a^{t}) .\]
Therefore $\varphi$ is a homomorphism of groups.

The composition of a partial representation and a homomorphism of groups is also a partial representation, so the map
\[ \begin{array}{rccc} \pi:= [\ ]_{{ }_H}\circ \varphi: & G & \rightarrow & \Bbbk_{par}H \\
\, & a^t & \mapsto & \left[ a^{tp^{n-m}}\right]_H \end{array} \]
is a partial representation. By the universal property of $\Bbbk_{par}G$, there exists a unique morphism of algebras \[\overline{\pi}:\Bbbk_{par}G \longrightarrow \Bbbk_{par}H\]such that $\overline{\pi}\circ [\ ] = \pi = [\ ]_{ {}_H } \circ \varphi$, that is, for every $a^t \in G$, 
\[ \overline{\pi }([ a^t ])=[ \varphi(a^t) ]_{ { }_H }= [ a^{tp^{n-m}} ]_{ { }_H } . \]

The morphism of algebras $\overline{\pi}$ induces the functor \[ \mathfrak{M}=: (\ \underline{\quad} \ )_{\overline{\pi}}: {\ }_{\Bbbk _{par}H }\text{Mod} \longrightarrow  {\ }_{\Bbbk_{par}G}\text{Mod}. \]

Fixing the notation, given a  $\Bbbk_{par}H$-module $M$, we denote its $\Bbbk_{par}H$-module structure by 
$\xi \triangleright m$, for any $\xi \in \Bbbk_{par}H$ and $m\in M$. The induced $\Bbbk_{par}G$-module structure will be denoted by 
$\zeta \blacktriangleright m = \overline{\pi}(\zeta) \triangleright m$ for all $ \zeta \in \Bbbk_{par}G$ and $m \in M$. Explicitly, for $[ a^t] \in \Bbbk_{par}G $ and $m \in M$, we write $[a^t]\blacktriangleright m = [a^{tp^{n-m}}]_{ { }_H } \triangleright m$. 

Note that, the functor $\mathfrak{M} =\left( \underline{\quad} \right)_{\overline{\pi}}$ is, by construction, essentially injective, faithful and additive.\\

{\textbf{(2) The functor $\mathfrak{M}$ associates simple objects in ${}_{\Bbbk_{par}H}\text{Mod}$ to simple objects in ${}_{\Bbbk_{par}G}\text{Mod}$}}\\

Consider $X \in \mathcal{P}_{e}(H)$ and denote by $K=H_X$ its isotropy subgroup in $H$. Write $K= \langle a^{p^{n-l}} \rangle \cong \mathbb{Z}_{p^l}$, for some $l\leq m\leq n$. 

For $a^{tp^{n-l}}\in K$, one can write $a^{tp^{n-l}}=(a^{tp^{n-m}})^{p^{m-l}} \in H$. Then, the isotropy subgroup $K$ can be expressed in terms of generators as 
\[ K = \big\langle  a^{p^{n-l}}  \big\rangle = \big\langle \big(  a^{p^{n-m}}\big)^{p^{m-l}}   \big\rangle \leq H.\]

   Furthermore, $K$ being the isotropy group of $X$, one can write the subset $X$ explicitly as
   \[ X=K\cup \big( \bigcup\limits_{i \in \mathfrak{J} } a^{-ip^{n-m}}K\big),\]
   for some subset $\mathfrak{J} \subseteq \{1, \dots , p^{m-l}-1  \}$. 
   
   Now, consider $\omega \in \Bbbk$ being a primitive $p^l$-th root of the unity, and the irreducible representation of the subgroup $K$
   \[
   \begin{array}{rccl} \pi_{\omega}: & K & \rightarrow &\Bbbk^\times \\
   \, & a^{tp^{n-l}} & \mapsto & \omega^t .\end{array}
   \]
   Denoting by $\Bbbk_{\omega}$ the one dimensional vector space carrying the representation $\pi_{\omega}$, take the simple $\Bbbk_{par}H$ module
   $$M=M_X \otimes \Bbbk_{\pi_{\omega}} $$
   with the usual action, that is, for $ \Big[  a^{t{p^{n-m}}} \Big]_{{ }_H} \in \Bbbk_{par}H$ and \\
   $P_{a^{sp^{n-m}}X} \otimes 1 \in M$, we have 
   \begin{equation}\label{eq:acao-kparH-mod}
   \Big[  a^{t{p^{n-m}}} \Big]_{{ }_H}   \triangleright (P_{a^{sp^{n-m}}X} \otimes 1) = 
        \left( P_{a^{rp^{n-m}}X}\otimes \omega^{q} \right) \;   [\! [a^{-rp^{n-m}} \in X]\! ], 
   \end{equation}
   where $t+s=q{p^{m-l}}+r, \ 0 \leq r < p^{m-l}  $.

Using the induction functor $\mathfrak{M} =\left( \underline{\;} \right)_{\overline{\pi}}$, we conclude that, for any $[a^t] \in \Bbbk_{par}G$ and every $P_{a^{sp^{n-m}}X} \otimes 1 \in M$, we have
\begin{equation}
    [ a^t]\blacktriangleright \big(  P_{a^{sp^{n-m}}X}\otimes 1 \big) = [ a^{tp^{n-m}} ]_{ { }_H } \triangleright \big( P_{a^{sp^{n-m}}X} \otimes 1 \big) .
\end{equation}
turning $M$ into a $\Bbbk_{par}G$-module whose structure is defined by \eqref{eq:acao-kparH-mod}).

Now, we can verify that the simple $\Bbbk_{par}H$-module $M=M_X \otimes \Bbbk_{\pi_{\omega }}$, is isomorphic, as $\Bbbk_{par}G$-module,  to the simple $\Bbbk_{par}G$-module 
$N=M_{\varphi^{-1}(X)} \otimes { \Bbbk  }_{\widehat{\pi}_{\omega}}$. Where $\widehat{\pi}_{\omega} :L \rightarrow \Bbbk^{\times}$ is an irreducible representation of the subgroup 
$$ L=\varphi^{-1}(K)=  \langle a^{p^{m-l}} \rangle = \langle a^{p^{n-(n-m+l)}} \rangle   \cong \Z_{p^{n-m+l}}  .$$
Explicitly, the representation $\widehat{\pi}_{\omega}$ is written as
\[
\begin{array}{rccc} \varphi^\ast(\pi_{\omega}): & L & \rightarrow & \Bbbk^{\times}\\
\, & a^{tp^{m-l}} & \mapsto & \omega^t .\end{array}
\]

Recall that \[ X = K\cup \big( \bigcup\limits_{i \in \mathfrak{J} } a^{-ip^{n-m}}K\big), \]
for a subset $\mathfrak{J} \subseteq \{1, \dots , p^{m-l}-1  \}$. The pre-image, $Y=\varphi^{-1}(X)$ is then the subset
\[ Y = L\cup \big( \bigcup\limits_{i \in \mathfrak{J} } a^{-i}L\big),\]
whose isotropy subgroup is $G_Y= L$.

We need to verify that the $\Bbbk_{par} G$ modules
\[
\mathfrak{M} =M_X \otimes \Bbbk_{\pi_\omega}
\]
and
$$  N = M_Y \otimes \Bbbk _{ \varphi^\ast (\pi_\omega)  }  $$
are isomorphic.

Recall that $N$ is a simple $\Bbbk_{par}G$-module, as we have seen before, with explicit structure given by
\[ [a^{t}] \bullet \left(P_{ a^{s}Y } \otimes 1 \right) =  
    \left(P_{ a^{r}Y } \otimes \omega ^{q} \right)\; [\! [a^{-r} \in Y]\! ]   \]
where $t+s = qp^{m-l}+r$, $0 \leq r< p^{m-l}$.

Consider the map  
\[ \begin{array}{rccc} f: & M_X \otimes \Bbbk_{\pi_\omega} & \rightarrow & M_Y \otimes \Bbbk_{\varphi^\ast(\pi_\omega)} \\
  \, & P_{a^{ tp^{n-m} }X  }\otimes 1 & \mapsto & P_{ a^{t}Y } \otimes 1 . \end{array}\]
This map is a isomorphism of $\Bbbk_{par}G$-modules between $M$ and $N$. Indeed, for all $a^t \in G$, and $P_{a^{sp^{n-m}}X }\otimes 1 \in M_X\otimes \Bbbk _{\pi_\omega}$, we have

\begin{align*}
    f\big(  [a^t] \blacktriangleright P_{a^{sp^{n-m}}X }\otimes 1 \big) &= f\big( [a^{tp^{n-m}}]_{ { }_H } \triangleright P_{a^{sp^{n-m}}X }\otimes 1 \big) \\
    &= f\big( P_{a^{rp^{n-m}}X }\otimes \omega^{q}  \big) \; [\! [a^{ -rp^{n-m} } \in X]\! ]\\
    &= \left( P_{a^{r}Y }\otimes \omega^{q}\right) [\! [ a^{ -rp^{n-m} } \in X]\!] 
\end{align*}
where $t+s = qp^{m-l} +r$. 
On the other hand
\begin{align*}
    [a^t] \bullet f( P_{a^{sp^{n-m}}X }\otimes 1 ) &= [a^t] \bullet  \left(P_{a^{s}Y }\otimes 1\right) \\
    &= \left( P_{a^{r}Y }\otimes \omega^{q} \right) \; [\! [ a^{ -r } \in Y]\! ]
\end{align*}
because $t+s=qp^{m-l} + r$. And we have that
\[ a^{-rp^{n-m}} \in X \Longleftrightarrow a^{-r} \in Y.\] 
Indeed, $a^{-rp^{n-m}}\in X=K\cup \big( \bigcup\limits_{i \in \mathfrak{J} } a^{-ip^{n-m}}K\big)$. \\
If $a^{-rp^{n-m}} \in K$, then $\varphi(a^{-r}) \in K$, that is, $a^{-r} \in \varphi^{-1}(K)=L \subseteq Y$. If $a^{-rp^{n-m}} \in  a^{-ip^{n-m}}K$, for some $i \in \mathfrak{J}$, we have  $ a^{(-r+i)p^{n-m}} =\varphi(a^{-r+i}) \in K$, therefore $a^{-r+i} \in \varphi^{-1}(K)=L$, that is, $a^{-r} \in a^{-i} L$,
for the same $i\in \mathfrak{J}$, therefore $a^{-r} \in Y$, leading to the conclusion that $f$ is a morphism of $\Bbbk_{par}G$-modules. 

Now, for proving that $f$ is an isomorphism of $\Bbbk_{par}G$-modules. The $\Bbbk_{par}G$-module $N$ is simple, then $f$ is surjective. In order to prove that $f$ is also injective, it is enough to verify that $M$ is a simple $\Bbbk_{par}G$-module. We have that $M$  is already a simple $\Bbbk_{par}H$-module. Then any element of $M$ generates the entire $\Bbbk_{par}H$-module $M$. In fact, fix an arbitrary $r\in \mathfrak{J}$ then for any element
\[
x=\sum_{i \in \mathfrak{J}}  P_{a^{ip^{n-m}}X} \otimes \alpha_i,\in M_X \otimes \Bbbk_{\pi_\omega} ,
\]
we have
\begin{eqnarray*}
    x & = & \sum_{i \in \mathfrak{J}}  P_{a^{ip^{n-m}}X} \otimes \alpha_i \\
     & = & \left( \sum_{i \in \mathfrak{J}} \alpha_i \left[ a^{(i-r)p^{n-m}}\right]_{{}_H} \right) \triangleright \left( P_{a^{rp^{n-m}}X} \otimes 1\right) \\
     & = & \left( \sum_{i \in \mathfrak{J}} \alpha_i [a^{(i-r)}] \right)
     \blacktriangleright \left( P_{a^{rp^{n-m}}X} \otimes 1\right).
\end{eqnarray*}
Therefore, $M$ is a simple $\Bbbk_{par}G$-module, concluding the proof that $f$ is an isomorphism.\\

{\textbf{(3) The construcion of the functor $\mathfrak{M}$ for the case of a general finite abelian group.}}\\

For the general case, consider a finite abelian group $G$. The Theorem of elementary divisors states that there are prime numbers $p_1, \ldots , p_k$, not necessarily distinct, and positive integer numbers $n_1, \ldots , n_k$ such that 
\[ G\cong \Z_{p_1^{n_1}}\times \dots \times \Z_{p_k^{n_k}}.\]
In terms of generators, there exist elements $a_1, \ldots a_k \in G$ such that
\[ G=\{ a_{1}^{t_1}\dots a_{k}^{t_k}  \ | \ 0\leq t_i \leq p_{i}^{n_i}-1 , \ i \in \{1, \dots , k \}  \}.\]
One can write the elements of any subgroup $H \leq G$ in terms of the same generators
 \begin{eqnarray*}
     H & = & \{ a_{1}^{s_1 p_1^{n_1-m_1}}\dots a_{k}^{s_kp_k^{n_k-m_k}}  \ | \ 0\leq s_i \leq p_{i}^{m_i}-1 , \ i \in \{1, \dots , k \}  \} \\
     & \cong & \Z_{p_1^{m_1}}\times \dots \times \Z_{p_k^{m_k}},
 \end{eqnarray*}
 with $m_i \leq n_i$, for all $i$. 

 Using the same map
 \[
 \begin{array}{rrcl} \varphi : & G & \rightarrow & H \\
 \, & a_{1}^{t_1}\dots a_{k}^{t_k} & \mapsto & a_{1}^{t_1 p_1^{n_1-m_1}}\dots a_{k}^{t_kp_k^{n_k-m_k}} \end{array} ,
 \]
 one can define the partial representation $\pi=[\; ]_H \circ \varphi :G \rightarrow \Bbbk_{par}H$, inducing the morphism of algebras $\overline{\pi} :\Bbbk_{par}G \rightarrow \Bbbk_{par}H$ and finally, inducing the functor $\mathfrak{M}=\left( \underline{\quad} \right)_{\overline{\pi}}: {}_{\Bbbk_{par}H}\text{Mod} \rightarrow {}_{\Bbbk_{par}G} \text{Mod}$. By construction, this functor is essentially injective, faithful and additive.

Consider $X \in \mathcal{P}_e (H)$ with isotropy group $K$. As $K$ is a subgroup of $H$, one can also describe its elements in terms of the generators $a_1, \ldots ,a_n$, then \begin{eqnarray*} 
K & = & \{ a_{1}^{x_1 p_1^{n_1-l_1}}\dots a_{k}^{x_kp_k^{n_k-l_k}}  \ | \ 0\leq x_i \leq p_{i}^{l_i}-1 , \ i \in \{1, \dots , k \}  \} \\
& \cong & \Z_{p_1^{l_1}}\times \dots \times \Z_{p_k^{l_k}},
\end{eqnarray*}
with $l_i \leq m_i \leq n_i$. 

Take elements $\omega_{i } \in \Bbbk^{\times}$ such that $\omega_{i}^{p_i^{l_i}}-1=0$, for $i \in \{ 0, 1, \dots , k \}$ and define the irreducible representation $\pi_{\omega_1 , \ldots, \omega_k}: K\longrightarrow \Bbbk^\times$ given by
\[
\pi_{\omega_1 , \ldots, \omega_k} \left( a_{1}^{x_1 p_1^{n_1-l_1}}\dots a_{k}^{x_kp_k^{n_k-l_k}}\right) =\omega_1^{x_1} \cdots \omega_k^{x_k}.
\]
Denoting by $\Bbbk_{\pi_{\omega_1 , \ldots ,\omega_k}} =\Bbbk$ the vector space upon which the irreducible representation $\pi_{\omega_1 , \ldots, \omega_k}$ acts, we have that $M_X \otimes \Bbbk_{\pi_{\omega_1 , \ldots, \omega_k}}$ is a simple $\Bbbk_{par}H$-module via
\begin{align*}
 &[ a_{1}^{t_1p_1^{n_1-m_1}}\dots a_{k}^{t_k p_k^{n_k-m_k} }]_{ { }_{H} } \triangleright \bigg( P_{a_1^{s_1p_1^{n_1-m_1}} \dots \ a_{k}^{s_kp_k^{n_k-m_k}} X } \otimes 1  \bigg) =\\
 &= P_{a_1^{r_1p_1^{n_1-m_1}} \dots \ a_{k}^{r_kp_k^{n_k-m_k}} X } \otimes \ \omega_{1}^{q_1}\dots \omega_{k}^{q_k}, 
\end{align*}
where $t_i+s_i = q_ip_i^{m_i-l_i}+r_i$.\\

Write \[ X= K \cup \Bigg(  \bigcup_{s_1,...,s_k\in \mathfrak{J}} a_{1}^{s_{1}p_1^{n_1-m_1}}\dots \ a_{k}^{s_{k}p_k^{n_k-m_k} }K \Bigg), \]
with $\mathfrak{J} \subseteq \{ 0\leq s_{i}\leq  p_i^{m_i-l_i}-1 \; | \; i\in \{1 , \ldots , k\} \}$. 

Consider the morphism $\varphi: G \longrightarrow H$ defined by \[ \varphi(a_1^{t_1}\dots a_k^{t_k})=a_1^{t_1 p_1^{n_1-m_1}} \cdots a_k^{t_k p_k^{n_k-m_k}}.\]
Take the preimages $L=\varphi^{-1}( K )\cong \Z_{p_1^{n_1-m_1+l_1}} \times \dots \times \Z_{p_k^{n_k-m_k+l_k}} $ and $Y=\varphi^{-1}(X)\subseteq G$, written as
\[ Y= L \cup \Bigg(  \bigcup_{s_1,...,s_k\in \mathfrak{J}} a_{1}^{s_{1}}\dots \ a_{k}^{s_{k} }L \Bigg), \]
Again, the image $\mathfrak{M}(M_X \otimes \Bbbk_{\pi_{\omega_1, \dots, \ \omega _k}}) $ of the simple $\Bbbk_{par}H$-module $M_X \otimes \Bbbk_{\pi_{\omega_1, \dots, \ \omega _k}}$ is isomorphic to a simple $\Bbbk_{par}G$-module. Indeed, we have the following morphism
\[ \begin{array}{rccl} f: & M_X \otimes \Bbbk_{\pi_{\omega_1, \dots, \ \omega _k}} & \longrightarrow & M_Y \otimes \Bbbk_{\varphi^\ast(\pi_{\omega_1, \dots , \ \omega_k })} \\
\, & P_{a_1^{s_1p_1^{n_1-m_1}} \dots \ a_k^{s_kp_k^{n_k-m_k}} X}\otimes 1 & \longmapsto & P_{a_1^{s_1}\dots \ a_k^{s_k}Y} \otimes 1 .\end{array}
\]
By a similar reasoning, one can prove that $f$ is a isomorphism of $\Bbbk_{par}G$-modules. Therefore, the functor $\mathfrak{M}$ maps simple objects of the category of $\Bbbk_{par}H$-modules into simple modules of the category of $\Bbbk_{par}G$-modules. \\


{\textbf{(4) The functor $\mathfrak{M}$ is monoidal.}}\\

To see that $\mathfrak{M}$ is monoidal, first recall that if $X \in \mathcal{P}_e(G)$, then 
\[
\overline{\pi}(P_X) = \prod_{x\in X}\varepsilon_{\varphi(x)} \prod_{y\notin X}(1_{\kpar H}-\varepsilon_{\varphi(y)}),
\]
which will vanishes if there exist $x \in X$ and $y \notin X$ such that $\varphi(x) = \varphi(y)$, or $yx^{-1} \in \ker\varphi$, that is $y = (yx^{-1})x \in \ker\varphi X$. In other words, $\overline{\pi}(P_X) \neq 0$ if, and only if, $\ker\varphi \subseteq G_X$, and
\[
\overline{\pi}(P_X) = \bool{\ker\varphi \subseteq G_X}P_{\varphi(X)}.
\]
 Moreover, if we have $X,Y \in \mathcal{P}_e(G)$ such that $\ker\varphi \subseteq G_X\cap G_Y$ and $\varphi(X) = \varphi(Y)$ then $X=Y$.

 Collecting the above, given $M$ and $N$ two $\kpar H$-module, we have
 \begin{eqnarray*}
\mathfrak{M}(M)\otimes_{A_{par}(\Bbbk G)}\mathfrak{M}(N) & \simeq &  \sum_{X \in \mathcal{P}_e(G)}(P_{X}\blacktriangleright M)\otimes (P_{X}\blacktriangleright N) \\
& = & \sum_{{{X \in \mathcal{P}_e(G)}\atop{\ker\varphi \subseteq G_X}}}(P_{\varphi(X)}\triangleright M)\otimes (P_{\varphi(X)}\triangleright N) \\
& = & \sum_{Z \in \mathcal{P}_e(H)}(P_{Z}\triangleright M)\otimes (P_{Z}\triangleright N) \\
& \simeq & M\otimes_{A_{par}(\Bbbk H)} N,
\end{eqnarray*}
So these composition give us an linear bijection, given by 
\[
\begin{array}{cccc}
    \mu_{M,N}: & \mathfrak{M}(M)\otimes_{A_{par}(\Bbbk G)}\mathfrak{M}(N) & \longrightarrow & \mathfrak{M}(M\otimes_{A_{par}(\Bbbk H)}N) \\
     & m\otimes_{A_{par}(\Bbbk G)} n & \longmapsto & m\otimes_{A_{par}(\Bbbk H)} n
\end{array}
\]
to see that $\mu_{M,N}$ is $\kpar G$-linear first recall that, the monoidal structure of the category of modules of an Hopf Algebroid is given by the comultiplication which, in our case, for any $g \in G$, is written as 
$$\Delta_r([g]) = \Delta_l([g]) = [g]\otimes_{A_{par}(\Bbbk G)}[g].$$
Finally, for any $g \in G$, $m \in \mathfrak{M}(M)$ and $n \in \mathfrak{M}(N)$, 
\[
\begin{array}{ccl}
    \mu_{M,N}\bigl([g]\triangleright (m\otimes_{A_{par}(\Bbbk G)}n) \bigr) & = & \mu_{M,N}\bigl(([g]\blacktriangleright m)\otimes_{A_{par}(\Bbbk G)}([g]\blacktriangleright n) \bigr)  \\
     & = & \mu_{M,N}\bigl(([\varphi(g)]_H\triangleright m)\otimes_{A_{par}(\Bbbk G)}([\varphi(g)]_H\triangleright n) \bigr)  \\
     & = & ([\varphi(g)]_H\triangleright m)\otimes_{A_{par}(\Bbbk H)}([\varphi(g)]_H\triangleright n) \\
     & = & [\varphi(g)]_H\triangleright (m\otimes_{A_{par}(\Bbbk H)}n) \\
     & = & [g]\blacktriangleright (m\otimes_{A_{par}(\Bbbk H)}n) \\
     & = & [g]\blacktriangleright \mu_{M,N}\bigl(m\otimes_{A_{par}(\Bbbk G)}n\bigr).
\end{array}
\]
So we have a $\kpar G$-isomorphism $\mu_{M,N}:\mathfrak{M}(M)\otimes_{A_{par}(\Bbbk G)}\mathfrak{M}(N) \to \mathfrak{M}(M\otimes_{A_{par}(\Bbbk H)}N)$, now to see that this isomorphism is natural on $M$ and $N$, consider $f:M \to M'$ and $g:N \to N'$ morphism of $\kpar H$-modules. It is straightforward to verify that the following diagram commutes 
\[
\xymatrix{
\mathfrak{M}(M)\otimes_{A_{par}(\Bbbk G)}\mathfrak{M}(M) \ar[rr]^-{\mu_{M,N}} \ar[dd]_{\mathfrak{M}(f)\otimes_{A_{par}(\Bbbk G)}\mathfrak{M}(g)} & & \mathfrak{M}(M\otimes_{A_{par}(\Bbbk H)} N) \ar[dd]^{\mathfrak{M}(f\otimes_{A_{par}(\Bbbk H)} g)} \\
& & \\
\mathfrak{M}(M')\otimes_{A_{par}(\Bbbk G)}\mathfrak{M}(M') \ar[rr]_-{\mu_{M',N'}} & & \mathfrak{M}(M'\otimes_{A_{par}(\Bbbk H)} N').
} 
\]
Therefore, we have that $\mu_{-,-}:\mathfrak{M}(-)\otimes_{A_{par}(\Bbbk G)}\mathfrak{M}(-) \to \mathfrak{M}(-\otimes_{A_{par}(\Bbbk H)} -)$ is a natural isomorphism. 

However, this functor is not strongly monoidal. Recall that the unit object $A_{par}(\Bbbk H)$ is an $\kpar H$-module, with the partial representation given by 
\[
[h]_H\triangleright (\varepsilon_{k_1}\ldots\varepsilon_{k_n}) = \varepsilon_{hk_1}\ldots\varepsilon_{hk_n}\varepsilon_h
\]
for all $h, k_1,...,k_n \in H$. Consider the following map
\[
\begin{array}{rccc}
    \eta = \overline{\pi}|_{A_{par}(\Bbbk G)}: & A_{par}(\Bbbk G) & \longrightarrow & \mathfrak{M}(A_{par}(\Bbbk H)) \\
     & \varepsilon_{g_1}\ldots\varepsilon_{g_n} & \longmapsto & \varepsilon_{\varphi(g_1)}\ldots\varepsilon_{\varphi(g_n)}.
\end{array}
\]
It is easy to see that $\eta$ is a morphism of $\kpar G$-modules. In fact, consider $g \in G$ and notice that
\[
\begin{array}{ccl}
    \eta\bigl([g]\triangleright \varepsilon_{g_1}\ldots\varepsilon_{g_n}\bigr) & = & \eta\bigl(\varepsilon_{gg_1}\ldots\varepsilon_{gg_n}\varepsilon_g\bigr) \\
     & = & \varepsilon_{\varphi(gg_1)}\ldots\varepsilon_{\varphi(gg_n)}\varepsilon_{\varphi(g)} \\
     & = & [\varphi(g)]_H\triangleright\bigl(\varepsilon_{\varphi(g)}\ldots\varepsilon_{\varphi(g)}\bigr) \\
     & = & [g]\blacktriangleright\eta\bigl(\varepsilon_{g_1}\ldots\varepsilon_{g_n}\bigr),
\end{array}
\]
for all $g,g_1,...,g_n \in G$. The morphism $\eta$ is an isomorphism only when $H=G$.

To conclude, the natural isomorphism $\mu_{M,N}:\mathfrak{M}(M)\otimes_{A_{par}(\Bbbk G)}\mathfrak{M}(N) \to \mathfrak{M}(M\otimes_{A_{par}(\Bbbk H)}N)$ and the morphism $\eta:A_{par}(\Bbbk G) \to \mathfrak{M}(A_{par}(\Bbbk H))$ are the coherence maps of the functor $\mathfrak{M}: {\ }_{\Bbbk _{par}H}\text{Mod} \longrightarrow  {\ }_{\Bbbk_{par}G}\text{Mod}$, what makes it monoidal.

\end{proof}

\section{Conclusions and outlook}

From the results presented in this article, one can conclude that the monoidal structure of the category of ${\Bbbk_{par} G}$-modules reflects the structure of the categories of ${\Bbbk H}$-modules for all subgroups $H\leq G$ (the Christmas Tree's Theorem). This is, in certain sense, expected by the very structure of $\Bbbk_{par} G$ as a product of matrix algebras $M_n (\Bbbk H)$, for subgroups $H\leq G$. Moreover, the Matryoshka's Theorem highlights, for finite abelian groups, the interconnection between the categories of $\Bbbk_{par} G$-modules and the categories of $\Bbbk_{par}H$-modules for all subgroups $H\leq G$. We expect that the Matryoshka's Theorem can be extended for finitely presented abelian groups. For the case of non abelian finite groups, the general result doesn't appear to be straightforward, but we expect some weakened version of this result.

In \cite{BHVactegories}, it was proved that, for any Hopf algebra $H$, the category of partial $H$-modules is a module category (actegory) over the monoidal category ${}_H \text{Mod}$. Then the category of $\Bbbk_{par}G$-modules is a module category over the monoidal category ${}_{\Bbbk G}\text{Mod}$. On the other hand, for a finite group $G$ and a field $\Bbbk$ of characteristic zero, the category ${}_{\Bbbk G}\text{Mod}$ is a fusion category and its simple module categories are classified by subgroups $L\leq G$ and $2$-cocycles $\psi :L\times L \rightarrow \Bbbk^{\times}$ \cite{EGNObook}. Therefore, it is interesting to characterize the decomposition of the category of $\Bbbk_{par}G$-modules in terms of its simple components coming from the general classification of ${}_{\Bbbk G}\text{Mod}$-module categories.

\section{Acknowledgements}
Arthur Rezende Alves Neto is supported by FAPESC (Proc. no. 41001010001P6). Javier Méndez was supported during his PhD  course by CAPES (Proc. no.  88882.438824/2019-01, 88887.667127/2022-00 and 88887.854618/2023-00). We would like to thank to William Hautekiet for the first discussions about the proof of the Matryoshka's theorem, during his visit to the Department of Mathematics of UFSC in 2024. The authors E.B. and J. M. would like to express our immense gratitude to Gabriel Samuel de Andrade for the discussions and previous results that made this article possible.

\end{document}